\documentclass[onecolumn,referee]{svjour3}       
\smartqed
\usepackage[latin1]{inputenc}
\usepackage{amsmath}
\usepackage{amssymb}
\usepackage{url}
\usepackage{hyperref}%
\usepackage{graphicx}
\usepackage{ifthen}
\usepackage{dsfont}
\usepackage{slashbox,subfig,verbatim}
\usepackage{xargs}
\usepackage[table]{xcolor}
\setlength{\marginparwidth}{4cm}                                
\reversemarginpar
\usepackage[novbox]{pdfsync}           
\usepackage[disable]{todonotes}
\usepackage{algorithm,algpseudocode}

\usepackage{aliascnt}
\usepackage{hyperref}

\def\nset{\mathbb{N}}
\def\rset{\mathbb{R}}

\newcommand{\NN}{\mathbb{N}}
\newcommand{\PP}{\mathbb{P}}

\newcommand{\RR}{\mathbb{R}}

\def \PP{\mathbb{P}}
\def \RR{\mathbb{R}}

\newcommand{\bx}{\mathbf{x}}

\newcommand{\bA}{\mathbf{A}}
\newcommand{\bL}{\mathcal{M}}
\newcommand{\RPfield}{\mathcal{B}(\RR^+)}

\newcommand{\eqdef}{\ensuremath{\stackrel{\mathrm{def}}{=}}}
\newcommand{\eqnum}[1]{\ensuremath{\stackrel{(#1)}{=}}}
\newcommand{\eqsp}{\;}
\newcommand{\one}{\mathbf{1}}
\newcommand{\esp}[1]{\mathbb{E}\left[#1\right]}
\newcommand{\cesp}[2]{\mathbb{E}\left[ #1 \middle| #2 \right]}
\newcommand{\var}[1]{\mathbb{V}\mathrm{ar}\left(#1\right)}
\newcommand{\rmd}{\mathrm{d}}

\newcommand{\epart}[2][]{\ifthenelse{\equal{#1}{}}{\boldsymbol{X}}{X}_{#2}\ifthenelse{\equal{#1}{}}{}{^{#1}}}
\newcommand{\espart}[2][]{\ifthenelse{\equal{#1}{}}{\boldsymbol{\hat{X}}}{\xi}_{#2}\ifthenelse{\equal{#1}{}}{^{N_1}}{^{N_1,#1}}}
\newcommand{\Xset}[1]{\mathbb{E}_{#1}}
\newcommand{\Xfield}[1]{\mathcal{E}_{#1}}
\newcommand{\trans}[1]{M_{#1}}
\newcommand{\omeas}{m}
\newcommand{\semi}[1][]{Q\ifthenelse{\equal{#1}{}}{}{_{#1}}}
\newcommand{\semiNorm}[1][]{P\ifthenelse{\equal{#1}{}}{}{_{#1}}}
\newcommand{\banach}[1]{\mathcal{B}_b(\Xset{#1})}
\newcommand{\measSet}[1]{\mathcal{P}(\Xset{#1})}
\newcommand{\kac}[1][]{\eta\ifthenelse{\equal{#1}{}}{}{_{#1}}}
\newcommand{\unkac}[1][]{\gamma\ifthenelse{\equal{#1}{}}{}{_{#1}}}
\newcommand{\Pkac}[1][]{\eta^{N_1}\ifthenelse{\equal{#1}{}}{}{_{#1}}}
\newcommand{\Punkac}[1][]{\gamma^{N_1}\ifthenelse{\equal{#1}{}}{}{_{#1}}}
\newcommand{\pot}[1][]{g\ifthenelse{\equal{#1}{}}{}{_{#1}}}
\newcommand{\func}[1][]{f\ifthenelse{\equal{#1}{}}{}{_{#1}}}
\newcommand{\PPsi}[1]{\Psi_{#1}}
\newcommand{\Peps}[1]{\epsilon_{#1}}
\newcommand{\eweight}[2][]{\ifthenelse{\equal{#1}{}}{\boldsymbol{\omega}}{\omega}_{#2}^{\ifthenelse{\equal{#1}{}}{}{#1}}}

\newcommand{\Nepart}[2][]{\ifthenelse{\equal{#1}{}}{\boldsymbol{\Xi}}{\boldsymbol{X}}_{#2}\ifthenelse{\equal{#1}{}}{}{^{#1}}}
\newcommand{\Neweight}[2][]{\ifthenelse{\equal{#1}{}}{\boldsymbol{\Omega}}{\boldsymbol{\omega}}_{#2}\ifthenelse{\equal{#1}{}}{}{^{#1}}}

\newcommand{\Nsmallpart}[2]{X_{#2}^{#1}}
\newcommand{\Nsmallweight}[2]{\omega_{#2}^{#1}}
\newcommand{\Ntpart}[2][]{\pmb{\widetilde{X}}_{#2}\ifthenelse{\equal{#1}{}}{}{^{#1}}}
\newcommand{\Nspart}[2][]{\ifthenelse{\equal{#1}{}}{\boldsymbol{\widehat{\Xi}}}{\widehat{\boldsymbol{X}}}_{#2}\ifthenelse{\equal{#1}{}}{}{^{#1}}}
\newcommand{\NXset}[1]{\pmb{\mathbb{E}}_{#1}}
\newcommand{\NXfield}[1]{\boldsymbol{\mathcal{E}}_{#1}}
\newcommand{\Ntrans}[1]{\boldsymbol{M}_{#1}}
\newcommandx{\Nomeas}[1][1=N_2]{\boldsymbol{m}^{#1}}
\newcommand{\Nsemi}[1][]{\boldsymbol{Q}\ifthenelse{\equal{#1}{}}{}{_{#1}}}
\newcommand{\NsemiNorm}[1][]{\boldsymbol{P}\ifthenelse{\equal{#1}{}}{}{_{#1}}}
\newcommand{\Nbanach}[1]{\mathcal{B}_b(\NXset{#1})}
\newcommand{\NmeasSet}[1]{\mathcal{P}(\NXset{#1})}
\newcommand{\Nkac}[1][]{\boldsymbol{\eta}\ifthenelse{\equal{#1}{}}{}{_{#1}}}
\newcommand{\Nunkac}[1][]{\pmb{\boldsymbol\gamma}\ifthenelse{\equal{#1}{}}{}{_{#1}}}
\newcommand{\NPkac}[1][]{\boldsymbol{\eta}^{N_2}\ifthenelse{\equal{#1}{}}{}{_{#1}}}
\newcommand{\NPunkac}[1][]{\pmb{\boldsymbol\gamma}^{N_2}\ifthenelse{\equal{#1}{}}{}{_{#1}}}
\newcommand{\PNkac}[1][]{\boldsymbol{\eta}^{N_2}\ifthenelse{\equal{#1}{}}{}{_{#1}}}
\newcommand{\PNunkac}[1][]{\pmb{\boldsymbol\gamma}^{N_2}\ifthenelse{\equal{#1}{}}{}{_{#1}}}
\newcommand{\PNkactilde}[1][]{\boldsymbol{\widetilde{\eta}}^{N_2}\ifthenelse{\equal{#1}{}}{}{_{#1}}}
\newcommand{\Npot}[1][]{\boldsymbol{g}\ifthenelse{\equal{#1}{}}{}{_{#1}}}
\newcommand{\Nfunc}[1][]{\boldsymbol{f}\ifthenelse{\equal{#1}{}}{}{_{#1}}}
\newcommandx{\nlsemigrp}[1][1=]{\Phi_{#1}}
\newcommand{\NPPsi}[1]{\boldsymbol{\Psi}_{#1}}

\title{On parallel implementation of Sequential Monte Carlo methods: the island particle model}
\titlerunning{Island models}
\authorrunning{C.~Vergé and al.}
\author{Christelle Vergé \and Cyrille Dubarry \and Pierre Del Moral  \and Eric Moulines}
\institute{Pierre Del Moral \at Centre INRIA Bordeaux Sud Ouest - 351 Cours de la Libération, 33405 Talence Cedex, \email{pierre.del-moral@inria.fr} \and Cyrille Dubarry \at SAMOVAR, CNRS UMR 5157 - Institut T\'el\'ecom/T\'el\'ecom SudParis,  9 rue Charles Fourier, 91000 Evry \and Eric Moulines \at LTCI, CNRS UMR 8151 - Institut T\'el\'ecom/T\'el\'ecom ParisTech, 46 rue Barrault,  75634 Paris Cedex 13, France, \email{eric.moulines@telecom-paristech.fr} \and Christelle Vergé \at ONERA - The French Aerospace Lab, F-91761 Palaiseau, \at CNES - 18 avenue Edouard Belin, 31401 Toulouse Cedex 9, \email{christelle.verge@onera.fr}}
\thanks{This work is supported by the Agence Nationale de la Recherche through the 2009-2012 project Big MC}
\begin{document}
\maketitle

\begin{abstract}
The approximation of the Feynman-Kac semigroups by systems of interacting particles is a very active research field, with applications in many different areas. In this paper, we study the parallelization of such approximations. The total population of particles is divided into sub-populations, referred to as \emph{islands}. The particles within each island follow the usual selection / mutation dynamics.
We show that the evolution of each island is also driven by a Feynman-Kac semigroup, whose transition and potential can be explicitly related to ones of the original problem.
Therefore, the same genetic type approximation of the Feynman-Kac semi-group may be used at the island level; each island might undergo  selection / mutation algorithm. We investigate the impact of the population size within each island and the number of islands, and study different type of interactions.
We find conditions under which introducing interactions between islands is beneficial. The theoretical results are supported by some Monte Carlo experiments.
\end{abstract}
\keywords{Particle approximation of Feynman-Kac flow, Island models, parallel implementation}

\section{Introduction}
Numerical approximation of Feynman-Kac semigroups by systems of interacting particles is a very active  field of researchs. Interacting particle systems  are increasingly used to sample complex high dimensional distributions in a wide range of applications including
nonlinear filtering, data assimilation problems, rare event sampling,
hidden Markov chain parameter estimation, stochastic control problems, financial
mathematics; see for example \cite{doucet:defreitas:gordon:2001}, \cite{chopin:2002}, \cite{delmoral:2004}, \cite{cappe:moulines:2005},  \cite{delmoral:hu:wu:2012} and the references therein.

Let $(\Xset{n},\Xfield{n})_{n \geq 0}$ be a sequence of measurable spaces. Denote by $\banach{n}$ the Banach space of all bounded and measurable real valued functions $f$ on $\Xset{n}$, equipped with the uniform norm. Let $(\pot[n])_{n\in \nset}$  be a sequence of measurable \emph{potential functions}, $\pot[n]:\Xset{n}\rightarrow \rset^+$.
Let $(\Omega,\mathcal{F},\PP)$ be a probability space. In the sequel, all the processes are defined on this probability space. Let $(X_{n})_{n \in \nset}$ be a non-homogenous Markov chain on the sequence of state-spaces $(\Xset{n})_{n \in \nset}$ with initial distribution $\kac_{0}$ on $(\Xset{0},\Xfield{0})$ and Markov kernels $(\trans{n})_{n\in \nset^*}$ \footnote{a Markov kernel on $\Xset{n} \times \Xfield{n+1}$ is a function $\trans{n+1}:\Xset{n} \times \Xfield{n+1} \rightarrow [0;1]$, such that, for all $x_n \in \Xset{n}$, $A_{n+1}\mapsto \trans{n+1}(x_n,A_{n+1})$ is a probability measure on $(\Xset{n+1},\Xfield{n+1})$  and for any $A_{n+1} \in \Xfield{n+1}$, $x_n \mapsto \trans{n+1}(x_n,A_{n+1})$ is a measurable function.}.
We associate to the sequences of potential functions $(\pot[n])_{n\in \nset}$ and Markov kernels $(\trans{n})_{n \in \nset^*}$ the sequence of \textit{Feynman-Kac measures}, defined for all $n \geq 1$ and for any $\func[n]\in \banach{n}$ by
\begin{align}\label{eq:defEta}
&\kac_n(\func[n])\eqdef \unkac_n(\func[n])/\unkac_n(1) \eqsp, \\
&\unkac_n(\func[n])\eqdef \esp{\func[n](X_n)~\prod_{0\leq p<n}\pot[p](X_p)} \\
& \quad \quad \quad \quad =\int \unkac_0(\rmd x_0)\left[\prod_{0\leq p < n} \pot[p](x_p) \trans{p+1}(x_p, \rmd x_{p+1}) \right] \func[n](x_n) \eqsp, \label{eq:defGamma}
\end{align}
where we have set by convention $\kac[0](\func[0]) = \unkac[0](\func[0]) \eqdef \esp{\func[0](X_0)}$.

The sequences of distributions $(\kac_n)_{n \geq 0}$ and $(\unkac_n)_{n \geq 0}$ are approximated sequentially using interacting particle systems (IPS). Such particle approximations are often referred to as sequential Monte Carlo (SMC) methods. The IPS consists in approximating for each $n \in \nset$ the probability $\kac_n$  by a set of $N_1$ \emph{particles} $(\epart[i]{n})_{i=1}^{N_1}$ which are generated recursively.
Typically, the update of the particles may be decomposed into a mutation and a selection step. For example, the bootstrap algorithm proceeds as follows. In the \emph{selection step} the particles are first sampled with weights proportional to the potential functions. In the \emph{mutation step}, a new generation of particles $(\epart[i]{n+1})_{i=1}^{N_1}$ is generated from the selected particles using the kernel $\trans{n+1}$. The asymptotic behavior of such particle approximation is now well understood (see \cite{delmoral:2004} and \cite{delmoral:hu:wu:2012}).

Feynman-Kac measures appear naturally in the filtering problem for Hidden Markov Model (HMM). Recall that a HMM is a pair of discrete time random processes $(X,Y)=(X_n,Y_n)_{n \in \nset}$, where $(X_n)_{n \geq 0}$ is the hidden state process (often called signal) and $(Y_n)_{n \geq 0}$ are the observations. To fix the ideas, $X_n$ and $Y_n$ take values in $\mathbb{X} \subset \RR^k$ and $\mathbb{Y} \subset \RR^l$. The state sequence is assumed to be a Markov chain with transition probability density $m(x,x')$ and initial density $m_0$ (both with respect to some common dominating measure $\mu$). In this case, for all $n \geq 0$, $\Xset{n} = \mathbb{X}$ and for all $A \in \mathcal{B}(\mathbb{X})$, $\trans{n}(x,A)=\int_{A}{m(x,x') \mu(\rmd x')}$, where $\mathcal{B}(\mathbb{X})$ is the Borel $\sigma$-field. The observations $(Y_n)_{n \geq 0}$ are conditionally independent given $X$ and for all $n \in \NN^{*}$, $Y_n$ has a conditional density $g(X_n,.)$ with respect to a reference measure $\nu$ such that
$\PP(Y_n \in B | X_n)=\int_{B}{g(X_n,y) \nu(dy)}$, for all $B \in \mathcal{B}(\mathbb{Y})$.
Here the potential functions are the likelihood of the observations $g_n(x)=g(x,Y_n)$.
In such settings, $\gamma_n$ is the joint distribution of $X_n$ and $Y_0,...,Y_{n-1}$, $\eta_n$ is the predictive distribution of $X_n$ conditionally on $Y_0,...,Y_{n-1}$, and $\unkac_n(1)$ is the likelihood of the sequence of observations $Y_0,...,Y_{n-1}$.

Particle filtering is computationally an intensive method. Parallel computations provides an appealing solution to tackle this issue (see \cite{durham:geweke:2011} and the references therein for an in-depth description of parallelization of Bayesian computations).
The basic idea to implement interacting particle system in parallel goes as follows: instead of considering a single large batch of $N= N_1 N_2$ particles, the population is divided into $N_2$ batches of $N_1$ particles. These batches are referred in the sequel to as \textit{islands}. The terminology \textit{island} is borrowed from dynamic populations theory (like the genetic type interacting particle model).
The particles within each island are selected and mutates, as described above. We might also introduce interactions among \emph{islands}.

In this paper we introduce the \emph{island particle models}. As we will see below, we may cast the island particle model in the Feynman-Kac framework, with appropriately defined potentials and transition kernels.
The key observation is that the marginal distribution of the island Feynman-Kac model w.r.t. any individual coincide with \eqref{eq:defGamma}.
This interpretation allows to use the interacting particle model at the island level.

The study of the island particle model gives rise to several challenging theoretical questions. In this paper,
we investigate the impact of the number of particles in each island $N_1$ compared to the number of islands $N_2$ for a given total number of particles $N\eqdef N_1 N_2$, for the double bootstrap algorithm, where the bootstrap mechanism is used both within and between the islands.
We focus on the asymptotic bias and variance when both $N_1$ and $N_2$ goes to infinity.
Fluctuation theorem and non-asymptotic results will be present in a forthcoming paper.
We also investigate when and why introducing interactions at the island level improves the accuracy of the particle approximation. Intuitively, the trade-off might be understood as follows. When the $N_2$ islands are run independently, the bias induced in each island only depends on their population size $N_1$; when $N_1$ is small compared to the total number $N$, the bias will be large (and is of course not reduced by averaging across the islands).
To reduce the bias,  introducing an interaction between the islands is beneficial. However, this interaction increases the variance, due to the selection step.
If we consider the mean squared error, the interaction is beneficial when the improvement associated to the bias correction is not offset by the variance increase. When the number of particles $N_1$ within each island is \emph{small} and the number of islands $N_2$ is \emph{large}, then the interaction is typically beneficial. On the contrary, when $N_2<<N_1$, the interaction between islands may increase the mean squared error. We then propose a method, based on a generalization of the effective sample size, this time computed at the island level, which always achieve a lower mean squared error than the independent island model.

The paper is organized as follows. In \autoref{sec:AlgorithmDerivation} the interacting particle approximation of the Feynman-Kac model is first reviewed. The island Feynman-Kac model is then introduced.
We first investigate the  double bootstrap algorithm, in which selection and mutation are applied at each iteration within and across the islands.
The asymptotic bias and variance of this algorithm is presented in \autoref{sec:asymptotic}.  The Feynman-Kac interpretation of the island model leads to several  interacting island algorithms, based on different approximations of Feynman-Kac flows. Some of these are introduced and analyzed in \autoref{sec:extensions}. Some numerical experiments are reported to support our findings and illustrate the impact of the numbers of islands and particles within each island in \autoref{sec:NumericalSimulations}.

\section{Algorithm derivation}
\label{sec:AlgorithmDerivation}
\label{subsec:SingleIsland}

In this section, we introduce the island particle model. We first briefly recall the bootstrap approximation of Feynman-Kac measures.

According to the definitions \eqref{eq:defEta} and \eqref{eq:defGamma} of the sequences of the Feynman-Kac measures
$(\kac_n)_{n \in \nset}$ and $(\unkac_n)_{n \in \nset}$, for all $\func[n+1] \in \banach{n+1}$ we get
$$
\unkac_{n+1}(\func[n+1])= \kac_{n+1}(\func[n+1]) \unkac_{n+1}(1),
$$
and since,
$$
\unkac_{n+1}(1)= \unkac_{n}(\pot[n])=\kac_{n}(\pot[n])\unkac_{n}(1),
$$
an easy induction shows that
$$
\unkac_{n+1}(1)=\prod_{0 \leq p < n+1} \kac_p(\pot[p])
$$
and then,
\begin{equation}\label{eq:decompositionEta}
\unkac_{n+1}(\func[n+1]) = \kac_{n+1}(\func[n+1])~\prod_{0\leq p<n+1}\kac_p(\pot[p])\eqsp.
\end{equation}
Moreover, the sequence $(\kac_n)_{n \in \nset}$ satisfy a nonlinear recursive relation. Indeed,
\begin{equation}
\label{eq:NonlinearRecursion}
\kac_{n+1}(\func[n+1])= \frac{ \unkac_n(\pot[n] \trans{n+1} \func[n+1])}{ \unkac_n(\pot[n] \trans{n+1} 1)}=
\frac{ \kac_n(\pot[n] \trans{n+1} \func[n+1])}{ \kac_n(\pot[n])}.
\end{equation}
Let $\measSet{n}$ be the set of probability measures on $\Xset{n}$.
Using the Boltzmann-Gibbs transformation $\PPsi{n}:\measSet{n} \to \measSet{n}$, defined for all $\mu_{n} \in \measSet{n}$ by
\begin{equation}\label{eq:defPsi}
\PPsi{n}(\mu_{n})(\rmd x_n) \eqdef \dfrac{\pot[n](x_n)~\mu_n(\rmd x_n)}{\mu_n(\pot[n])} \eqsp,
\end{equation}
the recursion \eqref{eq:NonlinearRecursion} may be rewritten as
\begin{equation} \label{eq:transportPhi}
\kac_{n+1}=\PPsi{n}(\kac_{n})\trans{n+1}\eqsp.
\end{equation}

The sequence of probability $(\kac_n)_{n \in \nset}$ can be approximated using the bootstrap algorithm. Other approximations can also be considered as well, but we only introduce the bootstrap for notational simplicity.
Let $N_1$ be a positive integer. For any nonnegative integer $n$ we denote by
\begin{equation}
\label{eq:defNXset}
(\NXset{n},\NXfield{n})\eqdef (\Xset{n}^{N_1},\Xfield{n}^{\otimes N_1}) \eqsp,
\end{equation}
the product space (the dependence of $\NXset{n}$ and $\NXfield{n}$ in $N_1$ is implicit). Thereafter, we omit to write the $\sigma$-field $\NXfield{n}$ when there will be no confusion.
 We define the Markov kernel $\Ntrans{n+1}(\bx_n,\rmd \bx_{n+1})$ from $\NXset{n}$ into $\NXset{n+1}$ as follows: for any $\bx_n = (x_n^1,\dots,x_n^{N_1})\in \NXset{n}$, we set
\begin{eqnarray}\label{eq:defNtrans}
\Ntrans{n+1}(\bx_n,\rmd \bx_{n+1}) &\eqdef & \prod_{1\leq i\leq N_1}~\sum_{j=1}^{N_1}\dfrac{\pot[n](x_n^j)}{\sum_{k=1}^{N_1} \pot[n](x_n^k)}\trans{n+1}(x_n^j,\rmd x^i_{n+1})\eqsp.
\end{eqnarray}
In other words, this transition can be interpreted as follows:
 \begin{itemize}
 \item In the \emph{selection step}, the components of the vector $\bx_n$ are selected with probabilities proportional to their potential $\{ \pot[n](x_n^i) \}_{i=1}^{N_1}$;
 \item In the \emph{mutation step}, the selected coordinates move conditionally independently to new positions using  the Markov kernel $\trans{n+1}$.
 \end{itemize}
Let us introduce the particles and their evolution.
Define by $(\epart{n})_{n \geq 0}$ the Markov chain where for each $n \in \nset$,
\begin{equation}\label{eq:defSingleParticles}
\epart{n}=(\epart[1]{n},\dots,\epart[N_1]{n}) \in \NXset{n},
\end{equation}
with initial distribution $\Nkac[0] \eqdef \kac[0]^{\otimes N_1}$ and transition kernel $\Ntrans{n+1}$.
Denote by $\omeas^{N_1}$ the empirical measure on $\NXset{n}$, defined as the kernel on $\NXset{n} \times \Xset{n}$ by
$$
 \omeas^{N_1}(\bx_n, \rmd z_{n})\eqdef\frac{1}{N_1}\sum_{i=1}^{N_1} \delta_{x_n^i}(\rmd z_n)\eqsp,
$$
where $\delta_{x_n}$ is the dirac mass at $x_n \in \Xset{n}$.
Equation \eqref{eq:decompositionEta} suggests the following $N_1$-particle approximations of the measures $\kac_n$ and $\unkac_n$ respectively defined for $\func[n] \in \banach{n}$ by
\begin{align}
\label{eq:approxDef_1}
&\Pkac_n(\func[n])\eqdef\omeas^{N_1}\func[n](\epart{n})=\frac{1}{N_1} \sum_{i=1}^{N_1} \func[n](\epart{n}^i) \\
\label{eq:approxDef_2}
&\Punkac_n(\func[n])\eqdef \Pkac_n(\func[n])~\prod_{0\leq p<n}\Pkac_p(\pot[p]) = \Pkac_n(\func[n])~\Punkac_n(1) \eqsp.
\end{align}

For $\bx_n =(x_n^1, \cdots, x_n^{N_1}) \in \NXset{n}$, define the potential function
\begin{equation}
\label{eq:defG}
\Npot[n](\bx_n)\eqdef \omeas^{N_1}\pot[n](\bx_n) = \frac{1}{N_1} \sum_{i=1}^{N_1}\pot[n](x_n^i)\eqsp.
\end{equation}
The sequences of transition kernels $(\Ntrans{n})_{n \in \nset}$  and potential functions $(\Npot[n])_{n \in \nset}$ given by \eqref{eq:defNtrans} and \eqref{eq:defG}, respectively, define the Feynman-Kac process.
The associated sequences of Feynman-Kac measures are defined, for each $\Nfunc[n] \in \Nbanach{n}$, by the following recursions
\begin{align}
&\Nkac_0(\Nfunc[0]) \eqdef \Nunkac_0(\Nfunc[0])= \esp{\Nfunc[0](\epart{0})} \eqsp, \label{eq:defNGamma} \\
&\Nkac_n(\Nfunc[n])\eqdef \Nunkac_n(\Nfunc[n])/\Nunkac_n(1), \quad \text{for all} \ n \geq 1, \label{eq:defNEta} \\
&\Nunkac_n(\Nfunc[n]) \eqdef \esp{\Nfunc[n](\epart{n})~\prod_{0\leq p<n}\Npot[p](\epart{p})} \eqsp, \quad \text{for all} \ n \geq 1. \label{eq:interpretation-feynman-kac-Nflow}
\end{align}
where $(\epart{n})_{n \geq 0}$ is a Markov chain with initial distribution $\Nunkac_0$ and transition kernel $\Ntrans{n}$.
The key result, justifying the introduction of the island particle models, is the following theorem which links $(\kac_n,\unkac_n)_{n \geq 0}$ and $(\Nkac_n,\Nunkac_{n})_{n \geq 0}$.

\begin{theorem}
\label{theo:unbiased}
For any $\Nfunc[n] \in \Nbanach{n}$ of the form $\Nfunc[n](\bx_n) = N_1^{-1} \sum_{i=1}^{N_1} \func[n](x_n^i) $ where $\func[n] \in \banach{n}$,
\begin{equation}
\Nunkac_n(\Nfunc[n]) = \unkac_n(\func[n]) \quad \mbox{and} \quad \Nkac_n(\Nfunc[n]) = \kac_n(\func[n])\eqsp.
\end{equation}
\end{theorem}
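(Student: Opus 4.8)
The plan is to establish the unnormalized identity $\Nunkac[n](\Nfunc[n]) = \unkac[n](\func[n])$ for every $n\ge 0$ and every $\func[n]\in\banach{n}$, with $\Nfunc[n](\bx_n) = N_1^{-1}\sum_{i=1}^{N_1}\func[n](x_n^i)$, by induction on $n$, and then to read off the normalized identity at no extra cost. Indeed, the choice $\func[n]\equiv 1$ forces $\Nfunc[n]\equiv 1$, so the unnormalized identity gives $\Nunkac[n](1) = \unkac[n](1)$; dividing, $\Nkac[n](\Nfunc[n]) = \Nunkac[n](\Nfunc[n])/\Nunkac[n](1) = \unkac[n](\func[n])/\unkac[n](1) = \kac[n](\func[n])$. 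The base case $n=0$ is immediate from \eqref{eq:defNGamma}: since $\epart{0}$ has law $\kac[0]^{\otimes N_1}$ its coordinates are i.i.d.\ with law $\kac[0]$, whence $\Nunkac[0](\Nfunc[0]) = \esp{N_1^{-1}\sum_{i=1}^{N_1}\func[0](\epart[i]{0})} = \kac[0](\func[0]) = \unkac[0](\func[0])$.

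For the inductive step I would first record the one-step Feynman--Kac recursion obeyed by both flows. Conditioning on the current state and invoking the Markov property in \eqref{eq:defGamma} and \eqref{eq:interpretation-feynman-kac-Nflow} yields
\[
\unkac[n+1](\func[n+1]) = \unkac[n](\pot[n]\,\trans{n+1}\func[n+1]), \qquad \Nunkac[n+1](\Nfunc[n+1]) = \Nunkac[n](\Npot[n]\,\Ntrans{n+1}\Nfunc[n+1]) \eqsp.
\]
Everything then reduces to one algebraic claim: when $\Nfunc[n+1]$ is the empirical mean of $\func[n+1]$, the function $\Npot[n]\,\Ntrans{n+1}\Nfunc[n+1]$ is itself the empirical mean of $\pot[n]\,\trans{n+1}\func[n+1]\in\banach{n}$. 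Granting this, the induction hypothesis applied to the base function $\pot[n]\trans{n+1}\func[n+1]$ closes the loop: $\Nunkac[n+1](\Nfunc[n+1]) = \Nunkac[n](\Npot[n]\Ntrans{n+1}\Nfunc[n+1]) = \unkac[n](\pot[n]\trans{n+1}\func[n+1]) = \unkac[n+1](\func[n+1])$.

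I expect the crux to be exactly this algebraic claim, and it is where the precise forms of $\Ntrans{n+1}$ in \eqref{eq:defNtrans} and of $\Npot[n]$ in \eqref{eq:defG} are tailored to cooperate. Since $\Ntrans{n+1}(\bx_n,\cdot)$ is a product of $N_1$ identical mixture laws $\sum_{j}\frac{\pot[n](x_n^j)}{\sum_k\pot[n](x_n^k)}\trans{n+1}(x_n^j,\cdot)$, integrating the additive function $N_1^{-1}\sum_i\func[n+1](x_{n+1}^i)$ leaves, in the summand indexed by $i$, all but the $i$-th coordinate integrating to one; every summand contributes the same value, so $\Ntrans{n+1}\Nfunc[n+1](\bx_n) = \sum_j\frac{\pot[n](x_n^j)}{\sum_k\pot[n](x_n^k)}\,\trans{n+1}\func[n+1](x_n^j)$. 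Multiplying by $\Npot[n](\bx_n) = N_1^{-1}\sum_k\pot[n](x_n^k)$, the normalizing denominator $\sum_k\pot[n](x_n^k)$ cancels and one is left with $N_1^{-1}\sum_j\pot[n](x_n^j)\,\trans{n+1}\func[n+1](x_n^j)$, which is precisely the empirical mean of $\pot[n]\trans{n+1}\func[n+1]$. This cancellation is the entire mechanism behind the theorem; the only point requiring a word of care is the convention when $\sum_k\pot[n](x_n^k)=0$, which is harmless because the potentials are nonnegative and that event does not contribute to the expectations. Beyond this, the induction is routine once one sees that the empirical-mean structure is stable under the composition $\Npot[n]\,\Ntrans{n+1}$.
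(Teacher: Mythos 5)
Your proposal is correct and takes essentially the same route as the paper: your central algebraic claim---that $\Npot[n]\,\Ntrans{n+1}$ carries empirical-mean test functions to the empirical mean of $\pot[n]\trans{n+1}\func[n+1]$, via cancellation of the normalizer $\sum_k \pot[n](x_n^k)$---is precisely the paper's conditional-expectation identity \eqref{eq:kac-expectation} (restated as \eqref{eq:RelSemi} in \autoref{lem:semiLinear}), and your forward induction through the recursion $\Nunkac[n+1](\Nfunc[n+1])=\Nunkac[n](\Npot[n]\Ntrans{n+1}\Nfunc[n+1])$ is the paper's backward telescoping of $\esp{\Punkac_n(\func[n])}$ read in the opposite direction. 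The one point to keep in mind (shared implicitly by the paper's own proof, where $\semi_1\cdots\semi_n\func[n]$ appears) is that applying the induction hypothesis to the new base function $\pot[n]\trans{n+1}\func[n+1]$ requires it to be admissible, e.g.\ bounded potentials or an extension of the identity to nonnegative measurable functions.
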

\begin{proof}
See \autoref{subsec:ProofUnbiased}.
\end{proof}

The Feynman-Kac model $(\Nunkac_n, \Nkac_n)_{n \geq 0}$ can be approximated by an interacting particle system \emph{at the island level}. We first describe the \emph{double bootstrap} algorithm where the bootstrap is also applied across the islands (this algorithm shares some similarities with \cite{chopin:jacob:papaspiliopoulos:2012}). This is only one of the many possible algorithms that can be derived from this interpretation of the Feynman-Kac model at the island level; see \autoref{sec:extensions} for other approximations.

Define by $\NmeasSet{n}$ the set of probabilities measures on $\NXset{n}$.
One can easily check that the sequence of measures $(\Nkac_n)_{n \geq 0}$ satisfies the following recursion
\begin{equation} \label{eq:NtransportPhi}
\Nkac_{n+1}=\NPPsi{n}(\Nkac_{n})\Ntrans{n+1}\eqsp,
\end{equation}
where $\NPPsi{n}: \NmeasSet{n} \to \NmeasSet{n}$ is the Boltzmann-Gibbs transformation defined for any $\mu_n \in \NmeasSet{n}$ by
$$
\NPPsi{n}(\mu_n)(\rmd \bx_n) \eqdef \dfrac{\Npot[n](\bx_n)~\mu_n(\rmd \bx_n)}{\mu_n(\Npot[n])} \eqsp.
$$
Let $N_2$ be a positive integer.
We define the Markov kernel $\bL_{n+1}$ from $(\NXset{n}^{N_2},\NXfield{n}^{\otimes N_2})$ to $(\NXset{n+1}^{N_2},\NXfield{n+1}^{\otimes N_2})$ as follows:
for any $(\bx_n^1,\dots,\bx_n^{N_2})\in \NXset{n}^{N_2}$ and $(\bx_{n+1}^1,\dots,\bx_{n+1}^{N_2})\in \NXset{n+1}^{N_2}$, we put
\begin{multline}\label{eq:defBigNtrans}
\bL_{n+1}((\bx_n^1,\dots,\bx_n^{N_2}),\rmd(\bx_{n+1}^1,\dots,\bx_{n+1}^{N_2})) \\
\eqdef \prod_{1\leq i\leq N_2} \sum_{j=1}^{N_2} \dfrac{\Npot[n](\bx_n^j)}{\sum_{k=1}^{N_2} \Npot[n](\bx_n^k)}\Ntrans{n+1}(\bx_n^j,\rmd\bx^i_{n+1})\eqsp.
\end{multline}

For each $n \in \nset$, $(\Nepart[1]{n},\dots,\Nepart[N_2]{n}) \in \NXset{n}^{N_2}$ is a population of $N_2$ interacting islands each with $N_1$ individuals. The process $\{(\Nepart[1]{n},\dots,\Nepart[N_2]{n})\}_{n \geq 0}$\ is a Markov chain with the transition kernel $(\bL_{n+1})_{ n \geq 0}$.

In this interpretation, the $N_2$-particle model defined above can be seen as an interacting particle approximation of the island Feynman-Kac measures $\{(\Nkac_{n},\Nunkac_{n})\}_{n \geq 0}$.
The transition $\bL_{n+1}$ can be interpreted as follows:
\begin{itemize}
\item In the \textit{selection step}, we sample randomly $N_2$ islands among the current islands $\left(\Nepart[i]{n}\right)_{1\leq i\leq N_2}\in  \NXset{n}^{N_2}$
with probability proportional to the empirical mean of the potentials in each island $\Npot[n](\Nepart[i]{n}) = N_1^{-1} \sum_{j=1}^{N_1} \pot[n](\Nsmallpart{i,j}{n})$, $1 \leq i \leq N_2$.
\item In the \textit{mutation transition},  the selected islands are independently updated using the Markov transition $\Ntrans{n+1}$.
\end{itemize}
Also observe that for $N_1=1$, every island has a single particle. In this situation, the island Feynman-Kac model  coincides with the $N_2$-particle model associated with the Feynman-Kac measures $\kac_n$.

Denote by $\Nomeas$ the empirical measure defined for any $\Nfunc[n] \in \Nbanach{n}$  and $(\bx_n^1,\dots,\bx_n^{N_2}) \in \NXset{n}^{N_2}$ by
$$
 \Nomeas \Nfunc[n] (\bx_n^1,\dots,\bx_n^{N_2})\eqdef\frac{1}{N_2}\sum_{i=1}^{N_2} \Nfunc[n](\bx_n^i)\eqsp.
$$

The $N_2$-particle approximations of the measures $\Nkac_n$ and $\Nunkac_n$ are defined for any $\Nfunc[n] \in \Nbanach{n}$ by
\begin{align}\label{eq:NapproxDef}
&\NPkac_n(\Nfunc[n])\eqdef\Nomeas \Nfunc[n] (\Nepart[1]{n},\dots,\Nepart[N_2]{n})\eqsp, \\
&\NPunkac_n(\Nfunc[n])\eqdef \NPkac_n(\Nfunc[n])~\prod_{0\leq p<n}\NPkac_p(\Npot[p]) = \NPkac_n(\Nfunc[n])~\NPunkac_{n}(1) \eqsp.
\end{align}

\begin{algorithm}[h]
    \begin{algorithmic}[1]
        \caption{\quad Bootstrap within bootstrap island filter}\label{alg:BootstrapWithinBootstrap}
        \State \underline{Initialization:}
        \For{$i$ from $1$ to $N_2$}
            \State Sample $N_1$ independent random variables $\Nepart[i]{0}=\left(\Nsmallpart{i,j}{0}\right)_{j=1}^{N_1}$ from $\kac_0$.
        \EndFor
        \For{$p$ from $0$ to $n-1$}
            \State \underline{Selection step between islands:}
            \State Sample $\boldsymbol{I}_p = (I_p^i)_{i=1}^{N_2}$ multinomially
            with probability proportional to $\left(\frac{1}{N_1} \sum_{j=1}^{N_1} \pot[p](\Nsmallpart{i,j}{p})\right)_{i=1}^{N_2}$.
            \State \underline{Island mutation step:}
            \For{$i$ from $1$ to $N_2$}
                \State \underline{Particle selection within each island:}
                \State Sample $\boldsymbol{J}_p^i = (J_p^{i,j})_{j=1}^{N_1}$ multinomially
            with probability proportional to $\left(\pot[p](\Nsmallpart{I_p^i,j}{p})\right)_{j=1}^{N_1}$.
                \State \underline{Particle mutation:}
                \State For $1 \leq j \leq N_1$, sample conditionally independently $\Nsmallpart{i,j}{p+1}$ from the Markov kernel $\trans{p+1}(\Nsmallpart{I_p^i,L_p^{i,j}}{p},\cdot)$,
                where $L_p^{i,j}=J_p^{I_p^i,j}$.
            \EndFor
        \EndFor
        \State Approximate $\kac[n](\func[n])$ by $\displaystyle \dfrac{1}{N_1 N_2} \sum_{i=1}^{N_2} \sum_{j=1}^{N_1} \func[n]\left(\Nsmallpart{i,j}{n}\right)$.
    \end{algorithmic}
\end{algorithm}

\section{Asymptotic analysis of the double bootstrap algorithm}
\label{sec:asymptotic}
The bootstrap particle approximation of the Feynman-Kac semigroup can be studied using the techniques introduced in \cite{delmoral:2004} and further developed in \cite{delmoral:hu:wu:2012}. For $\ell \in \nset$, consider the finite kernel $\semi[\ell+1]$ from $(\Xset{\ell},\Xfield{\ell})$ into $(\Xset{\ell+1},\Xfield{\ell+1})$ given for all $x_{\ell} \in \Xset{\ell}$ by
\begin{equation}
\label{eq:defQ}
\semi[\ell+1](x_{\ell},\rmd x_{\ell+1})\eqdef \pot[\ell](x_{\ell}) \trans{\ell+1}(x_{\ell},\rmd x_{\ell+1}) \eqsp.
\end{equation}
For $p < n$, define by $\semi[p,n]$ the finite kernel from $(\Xset{p},\Xfield{p})$ into $(\Xset{n},\Xfield{n})$ as the following product
\begin{equation}
\label{eq:defQbold}
\semi[p,n]\eqdef \semi[p+1]\semi[p+2]\dots \semi[n] \eqsp,
\end{equation}
and set by convention $\semi[n,n] \eqdef \mathrm{I}_n$ where $\mathrm{I}_n$ is the identity kernel on $(\Xset{n},\Xfield{n})$. With this definition, the linear semigroup associated with the sequence of unnormalized Feynman-Kac measures $(\unkac_n)_{n \in \NN}$ may be equivalently expressed as follows
\begin{equation}
\label{eq:linear-feynman-kac-flow}
\unkac_n=\unkac_p \semi[p,n] \eqsp.
\end{equation}
For any $x_p \in \Xset{p}$, $A_n \in \Xfield{n}$, $\semi[p,n]$ may be written as the following conditional expectation,
$$
\semi[p,n](x_p,A_n)=\cesp{\mathds{1}_{A_n}(X_n)~\prod_{p\leq q<n}\pot[q](X_q)}{X_p=x_p} \eqsp,
$$
where $(X_{n})_{n\geq 0}$ is the non-homogenous Markov chain on the sequence of state-spaces $(\Xset{n},\Xfield{n})_{n\geq 0}$ with initial distribution $\kac_{0}$ and Markov kernels $(\trans{n})_{n\geq 1}$.

According to \eqref{eq:defEta}, $\kac_n= \unkac_n/ \unkac_n(1)$ implies that $\kac_n = \unkac_p \semi[p,n]/\unkac_p\semi[p,n](1)$. Denote by $\nlsemigrp[n+1]$ the mapping from $\measSet{n}$ to $\measSet{n+1}$ given, for any $\mu_n \in \measSet{n}$ by
\begin{equation}
\label{eq:definition-nlsemigroup-island}
\nlsemigrp[n+1](\mu_n) \eqdef \PPsi{n}(\mu_n) \trans{n+1} = \dfrac{\mu_n \semi[n+1] }{\mu_n \semi[n+1](1)} \eqsp.
\end{equation}
Since $\kac_p = \unkac_p /\unkac_p(1)$, these relations may be equivalently rewritten as
\begin{equation}\label{eq:defQbar}
\kac_n = \frac{\kac_p \semi[p,n]}{\kac_p\semi[p,n](1)} = \nlsemigrp[p,n](\kac_p)\eqsp, \\
\end{equation}
where $\nlsemigrp[p,n]= \nlsemigrp[n] \circ \nlsemigrp[n-1] \circ \dots \circ \nlsemigrp[p+1]$
is the nonlinear semigroup associated to the normalized Feynman-Kac measures $(\eta_n)_{n \geq 0}$. This nonlinear semigroup may be associated to the \emph{potential kernels}
\begin{equation}
\label{eq:definition-potential-kernel}
\semiNorm_{p,n} \eqdef \dfrac{\semi_{p,n}}{\kac_p\semi_{p,n}(1)} = \dfrac{\unkac_p(1)}{\unkac_n(1)} \semi_{p,n}\eqsp,
\end{equation}
and therefore
\begin{equation}
\label{eq:linear-normalized-feynman-kac-flow}
\kac_n=\kac_p\semiNorm_{p,n} \eqsp.
\end{equation}
For $\ell \in \nset$, consider the finite kernel $\Nsemi[\ell+1]$ from $(\NXset{\ell},\NXfield{\ell})$ into $(\NXset{\ell+1},\NXfield{\ell+1})$ for any $\bx_{\ell} \in \NXset{\ell}$ by
$$
\Nsemi[\ell+1](\bx_{\ell},\rmd \bx_{\ell+1})\eqdef \Npot[\ell](\bx_{\ell}) \Ntrans{\ell+1}(\bx_{\ell},\rmd \bx_{\ell+1})\eqsp,
$$
where $\Ntrans{\ell}$ is defined in \eqref{eq:defNtrans} and $\Npot[\ell]$ in \eqref{eq:defG}.
For $p \leq n$, define by $\Nsemi[p,n]$ the finite kernel from $(\NXset{p},\NXfield{p})$ into $(\NXset{n},\NXfield{n})$ by the equation $\Nsemi[p,n]\eqdef \Nsemi[p+1]\Nsemi[p+2]\dots \Nsemi[n] \eqsp.$
Note that, for any $\bx_p \in \NXset{p}$, $\bA_n \in \NXfield{n}$,
$$
\Nsemi[p,n](\bx_p,A_n)=\cesp{\mathds{1}_{\bA_n}(\epart{n})~\prod_{p\leq q<n}\Npot[q](\epart{q})}{\epart{p}=\bx_p} \eqsp,
$$
where $(\epart{n})_{n \geq 0}$ is the island Markov chain defined in \eqref{eq:defSingleParticles}.
With this notation, we may rewrite \eqref{eq:defNGamma} as
$
\Nunkac_n=\Nunkac_p \Nsemi[p,n] \eqsp.
$
According to \eqref{eq:defNEta}, $\Nkac_n = \Nunkac_n / \Nunkac_n(1)$ implies that $\Nkac_n= \Nunkac_p \Nsemi[p,n]/\Nunkac_p \Nsemi[p,n](1)$, and then
$$
\Nkac_n = \frac{\Nkac_p \Nsemi[p,n]}{\Nkac_p \Nsemi[p,n](1)} = \Nkac_p \NsemiNorm[p,n]\eqsp,
$$
where $\NsemiNorm_{p,n}$ are given by
\[
\NsemiNorm_{p,n} \eqdef \dfrac{\Nsemi_{p,n}}{\Nkac_p \Nsemi_{p,n}(1)} = \dfrac{\Nunkac_p(1)}{\Nunkac_n(1)} \Nsemi_{p,n} \eqsp.
\]
According to \autoref{theo:unbiased}, $\Nunkac_p(1)= \unkac_p(1)$ and $\Nunkac_n(1)= \unkac_n(1)$, which implies that
\[
\NsemiNorm_{p,n}= \dfrac{\unkac_p(1)}{\unkac_n(1)} \Nsemi_{p,n}\eqsp.
\]
To analyse the fluctuation of the interacting particle approximation $(\Pkac_n)_{n \geq 0}$ around their limiting values $(\kac_n)_{n \geq 0}$, we introduced th \emph{local sampling errors}. We first decompose the difference $\Punkac_n - \unkac_n$ as follows
\begin{equation}
\label{eq:diff-unkac}
\Punkac_n - \unkac_n = \sum_{p=1}^n \left[ \Punkac_p \semi_{p,n} - \Punkac_{p-1} \semi_{p-1,n} \right] + \Punkac_0 \semi_{0,n} - \unkac_n \eqsp.
\end{equation}
For any $p \geq 1$, note that
\begin{multline*}
\Punkac_{p-1} \semi_p= \Punkac_{p-1}(1)~\Pkac_{p-1} \semi_p= \Punkac_{p-1}(1)~\Pkac_{p-1}(\pot[p-1])~\nlsemigrp[p](\Pkac_{p-1}) \\
= \Punkac_{p-1}(1)~\dfrac{\Punkac_{p-1}(\pot[p-1])}{\Punkac_{p-1}(1)}~\nlsemigrp[p](\Pkac_{p-1})= \Punkac_{p-1}(\pot[p-1])~\nlsemigrp[p](\Pkac_{p-1}) = \Punkac_p(1)~\nlsemigrp[p](\Pkac_{p-1}) \eqsp.
\end{multline*}
Plugging in this relation in the local error yields to
\begin{multline*}
\Punkac_p \semi_{p,n} - \Punkac_{p-1} \semi_{p-1,n} = \Punkac_p \semi_{p,n} - \Punkac_{p-1} \semi_p \semi_{p,n}\\
= \left(\Punkac_p  - \Punkac_p(1) \nlsemigrp[p](\Pkac_{p-1}) \right)  \semi_{p,n}= \Punkac_p(1)\left(\Pkac_p  - \nlsemigrp[p](\Pkac_{p-1})\right) \semi_{p,n} \eqsp,
\end{multline*}
which, together with \eqref{eq:diff-unkac}, imply that,
\begin{equation}\label{eq:defWgamma}
W_n^{\unkac,N_1} \eqdef \sqrt{N_1}\left[\Punkac_n-\unkac_n\right] = \sum_{p=0}^n \Punkac_p(1) W_p^{N_1} \semi_{p,n} \eqsp,
\end{equation}
where the  local errors $(W_p^{N_1})_{p \geq 0}$ are defined  by
\begin{equation}
\label{eq:defW}
W_0^{N_1}=\sqrt{N_1}(\Pkac_0-\kac_0) \quad \text{and} \quad W_p^{N_1} = \sqrt{N_1} \left[ \Pkac_p - \nlsemigrp[p](\Pkac_{p-1}) \right],~~\text{for~all}~~ p \geq 1 \eqsp.
\end{equation}
The following results, adapted from \cite[Corollary 9.3.1, pp. 295-298]{delmoral:2004}, establishes the convergence of $(W_p^{N_1})_{1 \leq p \leq n}$ to centered Gaussian fields.
\begin{theorem}
\label{theo:localAsympto}
For the bootstrap filter, for any fixed time horizon $n \geq 1$, the sequence $(W_p^{N_1})_{1 \leq p \leq n}$ converges in law, as $N_1$ goes to infinity, to a sequence of $n$ independent centered Gaussian random fields $(W_p)_{0 \leq p \leq n}$ with variance given, for any bounded function $\func[p] \in \banach{p}$, and $1 \leq p \leq n$, by
\begin{equation}
\label{eq:definition-moment-Wp}
\esp{W_p(\func[p])^2} = \kac_p\left[ \left(\func[p]-\kac_p\func[p]\right)^2\right]
\eqsp.
\end{equation}
\end{theorem}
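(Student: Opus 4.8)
The plan is to exploit the conditional independence built into the bootstrap dynamics, reducing the statement to a central limit theorem for triangular arrays of conditionally i.i.d.\ variables, handled inductively in time through characteristic functions. Write $\mathcal{G}_p \eqdef \sigma(\epart{0},\dots,\epart{p})$ for the filtration generated by the successive particle configurations. The starting observation is that, by the very definition of the transition kernel $\Ntrans{p}$ in \eqref{eq:defNtrans}, conditionally on $\mathcal{G}_{p-1}$ the particles $(\epart[i]{p})_{i=1}^{N_1}$ are independent and identically distributed with common law $\PPsi{p-1}(\Pkac_{p-1})\trans{p} = \nlsemigrp[p](\Pkac_{p-1})$: the selection weights in \eqref{eq:defNtrans} are precisely those of the Boltzmann--Gibbs transformation $\PPsi{p-1}$ applied to the empirical measure $\Pkac_{p-1}$, so each one-particle marginal equals $\nlsemigrp[p](\Pkac_{p-1})$ and the $N_1$ coordinates are conditionally independent. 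Consequently, for $p \geq 1$,
\[
W_p^{N_1}(\func[p]) = \frac{1}{\sqrt{N_1}} \sum_{i=1}^{N_1} \left[ \func[p](\epart[i]{p}) - \nlsemigrp[p](\Pkac_{p-1})(\func[p]) \right]
\]
is a normalized sum of terms that, given $\mathcal{G}_{p-1}$, are i.i.d.\ and centered; the initial error $W_0^{N_1}$ is the analogous object for $N_1$ i.i.d.\ draws from $\kac_0$.

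I would then establish the joint convergence by a backward peeling argument on the characteristic function $\esp{\exp(i \sum_{p=0}^n t_p W_p^{N_1}(\func[p]))}$. Conditioning on $\mathcal{G}_{n-1}$, the final factor $\exp(i t_n W_n^{N_1}(\func[n]))$ is the characteristic function of a normalized sum of conditionally i.i.d.\ centered variables that are uniformly bounded, since $|\func[n](\epart[i]{n}) - \nlsemigrp[n](\Pkac_{n-1})(\func[n])| \leq 2\|\func[n]\|_\infty$. The Lindeberg condition is therefore immediate, and the Lindeberg central limit theorem gives that this conditional characteristic function is asymptotically
\[
\exp\left(-\tfrac{1}{2} t_n^2 \, \nlsemigrp[n](\Pkac_{n-1})\bigl[(\func[n] - \nlsemigrp[n](\Pkac_{n-1})\func[n])^2\bigr]\right).
\]
The conditional variance here is random, but it converges in probability to the deterministic quantity $\kac_n[(\func[n]-\kac_n\func[n])^2]$; this uses the consistency $\Pkac_{p} \to \kac_p$ of the empirical measures together with the continuity of $\mu \mapsto \nlsemigrp[n](\mu)(\func)$. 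Because the limit is deterministic, the last factor detaches from the conditional expectation, and iterating down to $p=0$ produces the product
\[
\prod_{p=0}^n \exp\left(-\tfrac{1}{2} t_p^2 \, \kac_p[(\func[p]-\kac_p\func[p])^2]\right),
\]
which is exactly the characteristic function of the announced independent centered Gaussian fields; the absence of cross terms $t_p t_q$ with $p \neq q$ is precisely the independence across time.

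The main obstacle is the control of the random conditional variance: one must justify replacing $\nlsemigrp[n](\Pkac_{n-1})[(\func[n]-\nlsemigrp[n](\Pkac_{n-1})\func[n])^2]$ by its deterministic limit along the peeling, which requires the law of large numbers $\Pkac_p \to \kac_p$ (itself obtained from the same conditioning together with an $L^2$ or Borel--Cantelli estimate) and the nondegeneracy of the Boltzmann--Gibbs normalization, i.e.\ $\kac_{p-1}(\pot[p-1]) > 0$. Granting these, the boundedness of the test functions makes the Lindeberg verification and the requisite tightness routine, and the result follows as in \cite[Corollary 9.3.1]{delmoral:2004}.
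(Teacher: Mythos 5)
Your proposal is correct, and it reconstructs essentially the argument behind the result the paper itself does not prove but only cites (\cite[Corollary 9.3.1]{delmoral:2004}): the conditionally i.i.d.\ structure of the bootstrap particles given $\mathcal{G}_{p-1}$, with common law $\nlsemigrp[p](\Pkac_{p-1})$, a conditional Lindeberg CLT made trivial by the uniform bound $2\|\func[p]\|_\infty$, and the replacement of the random conditional variance by $\kac_p[(\func[p]-\kac_p\func[p])^2]$ via the law of large numbers $\Pkac_{p-1}\to\kac_{p-1}$ together with $\kac_{p-1}(\pot[p-1])>0$. Your backward peeling of the characteristic function is a mild repackaging of the martingale-array CLT used in the reference, and it correctly yields both the announced variance and the asymptotic independence across time through the absence of cross terms.
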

Now, consider the sequence of random fields $(W_n^{\kac,N_1})_{n \geq 0}$ defined for any function $\func[n] \in \banach{n}$ by
\begin{align}
\label{eq:defWeta}
W_n^{\kac,N_1}(\func[n]) \eqdef \sqrt{N_1}\left[\Pkac_n-\kac_n\right](\func[n]) & = \sqrt{N_1}\Pkac_n[\func[n]-\kac_n(\func[n])] \\
& =\sqrt{N_1}\dfrac{\Punkac_n(\func[n]-\kac_n(\func[n]))}{\Punkac_n(1)} \eqsp.
\end{align}
Using the fact that $\unkac_n(f_n-\kac_n(f_n))=0$ and \eqref{eq:defWgamma}, we may write
\begin{align}
W_n^{\kac,N_1}(\func[n])=\sqrt{N_1}\dfrac{(\Punkac_n-\unkac_n)(\func[n]-\kac_n(\func[n]))}{\Punkac_n(1)} =  \dfrac{W_n^{\unkac,N_1}\left(\func[n]-\kac_n(\func[n])\right)}{\Punkac_n(1)} \eqsp.
\end{align}
The decomposition \eqref{eq:defWgamma} and \eqref{eq:defWeta}, combined with the Slutsky's lemma, imply the following asymptotic decomposition (which remains valid for more general algorithms than the bootstrap algorithm)
\begin{theorem}
\label{theo:localAsympto-1}
Assume that the sequence of local errors $(W_p^{N_1})_{1 \leq p \leq n}$ converges in law, as $N_1$ goes to infinity, to a sequence of $n$ independent centered Gaussian random fields $(W_p)_{1 \leq p \leq n}$.
Then, the sequence of random fields $(W_n^{\unkac,N_1})_{N_1 \geq 0}$ converges in law, as $N_1$ goes to infinity, to the Gaussian random fields $W_n^{\unkac}$ defined for any bounded function $\func[n]$ in $\banach{n}$ by
\begin{equation}\label{eq:defWGammaLim}
W_n^{\unkac}(\func[n]) \eqdef \sum_{p=0}^n \unkac_p(1) W_p(\semi_{p,n}\func[n])  = \unkac_n(1) \sum_{p=0}^n  W_p(\semiNorm_{p,n}\func[n])\eqsp,
\end{equation}
where $\semiNorm_{p,n}$ is defined in \eqref{eq:definition-potential-kernel}. The sequence of random fields $(W_n^{\kac,N_1})_{N_1 \geq 0}$ converges in law, as $N_1$ goes to infinity, to the Gaussian random fields $W_n^{\kac}$ defined for any function $\func[n] \in \banach{n}$ by
\begin{equation}\label{eq:defWetaLim}
W_n^{\kac}(\func[n]) \eqdef \sum_{p=0}^n W_p(\semiNorm_{p,n}(\func[n]-\kac_n(\func[n]))) \eqsp.
\end{equation}
\end{theorem}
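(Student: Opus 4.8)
The plan is to derive both convergences directly from the \emph{exact} identity \eqref{eq:defWgamma}, which represents $W_n^{\unkac,N_1}(\func[n])$ as the finite sum $\sum_{p=0}^n \Punkac_p(1)\,W_p^{N_1}(\semi_{p,n}\func[n])$, by combining the assumed weak convergence of the local errors with Slutsky's lemma. The one ingredient not contained in the hypothesis is the consistency of the random normalizers, namely $\Punkac_p(1)\to\unkac_p(1)$ in probability. I would obtain this by a short induction: the hypothesis forces each $W_p^{N_1}=\sqrt{N_1}[\Pkac_p-\nlsemigrp[p](\Pkac_{p-1})]$ (recall \eqref{eq:defW}) to be tight, whence $\Pkac_p-\nlsemigrp[p](\Pkac_{p-1})\to 0$ in probability, and similarly $\Pkac_0\to\kac_0$; using the continuity of the maps $\nlsemigrp[p]$ one propagates $\Pkac_p\to\kac_p$ for all $p$. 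Since $\Punkac_p(1)=\prod_{0\le q<p}\Pkac_q(\pot[q])$, this yields $\Punkac_p(1)\to\unkac_p(1)=\prod_{0\le q<p}\kac_q(\pot[q])$.

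For the unnormalized field I would proceed term by term. Each function $\semi_{p,n}\func[n]$ belongs to $\banach{p}$, so the hypothesis supplies the joint convergence in law of the vector $\bigl(W_p^{N_1}(\semi_{p,n}\func[n])\bigr)_{0\le p\le n}$ to the independent centred Gaussians $\bigl(W_p(\semi_{p,n}\func[n])\bigr)_{0\le p\le n}$, the term $p=0$ being the classical central limit theorem for the initial i.i.d.\ sample. Because the coefficients $\bigl(\Punkac_p(1)\bigr)_{0\le p\le n}$ converge in probability to the \emph{deterministic} vector $\bigl(\unkac_p(1)\bigr)_{0\le p\le n}$, the pair formed by the coefficient vector and the local-error vector converges jointly in law, and applying the continuous mapping theorem to the bilinear functional $\bigl((c_p),(w_p)\bigr)\mapsto\sum_p c_p w_p$ gives the convergence of $W_n^{\unkac,N_1}(\func[n])$ to $\sum_{p=0}^n\unkac_p(1)W_p(\semi_{p,n}\func[n])$. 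The second form in \eqref{eq:defWGammaLim} is then a purely algebraic rewriting, using $\unkac_p(1)\semi_{p,n}=\unkac_n(1)\semiNorm_{p,n}$ from \eqref{eq:definition-potential-kernel} together with the linearity of $W_p$.

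For the normalized field I would invoke the ratio representation $W_n^{\kac,N_1}(\func[n])=W_n^{\unkac,N_1}(\func[n]-\kac_n(\func[n]))/\Punkac_n(1)$ established just above the statement. Applying the previous paragraph to the centred test function $\func[n]-\kac_n(\func[n])\in\banach{n}$ shows that the numerator converges in law to $W_n^{\unkac}(\func[n]-\kac_n(\func[n]))$, while the denominator converges in probability to $\unkac_n(1)>0$; a final application of Slutsky's lemma then yields the limit $W_n^{\unkac}(\func[n]-\kac_n(\func[n]))/\unkac_n(1)$, which collapses to $\sum_{p=0}^n W_p(\semiNorm_{p,n}(\func[n]-\kac_n(\func[n])))$ upon inserting the definition of $W_n^{\unkac}$ and using $\semiNorm_{p,n}=\tfrac{\unkac_p(1)}{\unkac_n(1)}\semi_{p,n}$ once more. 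The only delicate point in the whole argument is the coupling of the random coefficients $\Punkac_p(1)$ with the weakly convergent local errors: this is precisely the situation covered by Slutsky's lemma, and it works only because the $\Punkac_p(1)$ converge to \emph{constants} rather than to genuinely random limits.
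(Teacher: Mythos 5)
Your proposal is correct and takes exactly the route the paper intends: the paper gives no separate proof of this theorem, remarking only that the decompositions \eqref{eq:defWgamma} and \eqref{eq:defWeta} combined with Slutsky's lemma imply it, and your argument is precisely that, with the one ingredient the paper leaves implicit --- the convergence in probability of the random normalizers $\Punkac_p(1)$ to the constants $\unkac_p(1)$ --- correctly supplied by your tightness-plus-induction step through the continuity of the maps $\nlsemigrp[p]$. Since this merely fleshes out the paper's sketch rather than following a different strategy, there is nothing further to compare.
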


The asymptotic bias and variance for the single island interacting particle approximation of the sequence of Feynman-Kac measure formulated in the forthcoming theorem result almost immediately from \autoref{theo:localAsympto}.
\begin{theorem}
\label{theo:singleIslandAsymptotics}
Assume that the sequence of local errors $(W_p^{N_1})_{1 \leq p \leq n}$ converges in law, as $N_1$ goes to infinity, to a sequence of $n$ independent centered Gaussian random fields $(W_p)_{1 \leq p \leq n}$.
Then, for any time horizon $n\geq 0$ and any bounded function $\func[n] \in \banach{n}$, we have
\begin{align*}
&\lim_{N_1 \rightarrow \infty} N_1 \esp{\Pkac_n(\func[n]) - \kac_n(\func[n])} = B_n(\func[n]) \eqsp,\\
&\lim_{N_1 \rightarrow \infty} N_1 \var{\Pkac_n(\func[n])} = V_n(\func[n]) \eqsp,
\end{align*}
with
\begin{align}
\label{eq:defBn}
&B_n(\func[n]) \eqdef -\sum_{p=0}^n \esp{W_p(\semiNorm_{p,n}(1))W_p(\semiNorm_{p,n}(\func[n]-\kac_n(\func[n])))} \eqsp, \\
\label{eq:defVn}
&V_n(\func[n]) \eqdef \sum_{p=0}^n \esp{\{W_p(\semiNorm_{p,n}(\func[n]-\kac_n(\func[n])))\}^2}\eqsp.
\end{align}
\end{theorem}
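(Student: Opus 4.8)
The plan is to prove the two limits separately, in each case upgrading the convergence in law supplied by \autoref{theo:localAsympto-1} to convergence of the relevant first and second moments, and then evaluating the limiting moments explicitly by exploiting the independence and centering of the limit fields $(W_p)_{0\leq p\leq n}$. Throughout I would write $\bar{f}_n \eqdef \func[n]-\kac_n(\func[n])$, so that $\kac_n(\bar{f}_n)=0=\unkac_n(\bar{f}_n)$.

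\textbf{Variance.} Since $N_1\var{\Pkac_n(\func[n])} = \var{W_n^{\kac,N_1}(\func[n])}$ and \autoref{theo:localAsympto-1} gives $W_n^{\kac,N_1}(\func[n]) \Rightarrow W_n^{\kac}(\func[n])$, I would argue that the family $\{W_n^{\kac,N_1}(\func[n])^2\}_{N_1}$ is uniformly integrable, which follows from the standard uniform-in-$N_1$ $L^p$ bounds for bootstrap particle approximations of Feynman-Kac flows (\cite{delmoral:2004}); weak convergence then promotes to $\esp{W_n^{\kac,N_1}(\func[n])^2}\to\esp{W_n^{\kac}(\func[n])^2}$. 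Because $\esp{W_n^{\kac,N_1}(\func[n])}=O(N_1^{-1/2})$ (controlled in the bias step below), its square is negligible and $\var{W_n^{\kac,N_1}(\func[n])}\to \esp{W_n^{\kac}(\func[n])^2}$. Substituting the representation \eqref{eq:defWetaLim}, $W_n^{\kac}(\func[n])=\sum_{p=0}^n W_p(\semiNorm_{p,n}\bar{f}_n)$, and using that the $W_p$ are independent and centered so that all off-diagonal terms vanish, I obtain $\esp{W_n^{\kac}(\func[n])^2}=\sum_{p=0}^n \esp{W_p(\semiNorm_{p,n}\bar{f}_n)^2}=V_n(\func[n])$.

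\textbf{Bias.} Here the starting point is the exact identity recorded just before the theorem, $W_n^{\kac,N_1}(\func[n]) = W_n^{\unkac,N_1}(\bar{f}_n)/\Punkac_n(1)$, together with $\Punkac_n(1)=\unkac_n(1)+N_1^{-1/2}W_n^{\unkac,N_1}(1)$. Expanding the reciprocal,
\[
\frac{1}{\Punkac_n(1)} = \frac{1}{\unkac_n(1)}\left(1-\frac{W_n^{\unkac,N_1}(1)}{\sqrt{N_1}\,\unkac_n(1)}+R_{N_1}\right)\eqsp,
\]
and using $N_1\esp{\Pkac_n(\func[n])-\kac_n(\func[n])}=\sqrt{N_1}\,\esp{W_n^{\kac,N_1}(\func[n])}$, I get
\[
\sqrt{N_1}\,\esp{W_n^{\kac,N_1}(\func[n])} = \frac{\esp{W_n^{\unkac,N_1}(\bar{f}_n)}}{\unkac_n(1)} - \frac{\esp{W_n^{\unkac,N_1}(\bar{f}_n)\,W_n^{\unkac,N_1}(1)}}{\unkac_n(1)^2} + \sqrt{N_1}\,\esp{W_n^{\unkac,N_1}(\bar{f}_n)\,R_{N_1}}\eqsp.
\]
The leading term vanishes \emph{identically} for every $N_1$: the unnormalized particle measure is unbiased, $\esp{\Punkac_n(g)}=\unkac_n(g)$ for all $g\in\banach{n}$ (a classical consequence of the multiplicative structure \eqref{eq:approxDef_2}; see \cite{delmoral:2004}), whence $\esp{W_n^{\unkac,N_1}(\bar{f}_n)}=\sqrt{N_1}\,[\esp{\Punkac_n(\bar{f}_n)}-\unkac_n(\bar{f}_n)]=0$. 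For the second term, \autoref{theo:localAsympto-1} yields joint convergence in law of $(W_n^{\unkac,N_1}(\bar{f}_n),W_n^{\unkac,N_1}(1))$ to $(W_n^{\unkac}(\bar{f}_n),W_n^{\unkac}(1))$, and the same uniform-integrability input gives convergence of the product moment. Inserting \eqref{eq:defWGammaLim}, $W_n^{\unkac}(g)=\unkac_n(1)\sum_{p}W_p(\semiNorm_{p,n}g)$, the factor $\unkac_n(1)^2$ cancels and independence collapses the double sum to its diagonal, producing exactly $-\sum_{p=0}^n\esp{W_p(\semiNorm_{p,n}(1))\,W_p(\semiNorm_{p,n}\bar{f}_n)}=B_n(\func[n])$.

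\textbf{Main obstacle.} The part that is genuinely more than bookkeeping is the rigorous control of the remainder $\sqrt{N_1}\,\esp{W_n^{\unkac,N_1}(\bar{f}_n)\,R_{N_1}}$ and of the random denominator $\Punkac_n(1)$, together with the uniform integrability converting weak convergence into moment convergence. Since $R_{N_1}$ is formally of order $N_1^{-1}(W_n^{\unkac,N_1}(1))^2$, one needs uniform-in-$N_1$ $L^p$ bounds on $W_n^{\unkac,N_1}$ to ensure this term is $o(N_1^{-1/2})$; these same bounds also handle the fact that, for the bootstrap with potentials that may vanish, $\Punkac_n(1)$ is not bounded away from zero, so the contribution of the small-denominator event must be isolated and estimated. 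I expect these $L^p$ estimates, available from the Feynman-Kac particle literature, to be the workhorse that simultaneously legitimizes the Taylor expansion and supplies the moment convergence used in both limits.
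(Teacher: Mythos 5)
Your proof is correct and rests on the same two pillars as the paper's: the exact unbiasedness of the unnormalized particle measure, $\esp{\Punkac_n(g)}=\unkac_n(g)$ (\autoref{theo:unbiased}), and the upgrade of the \emph{joint} convergence in law of the pair $(W_n^{\unkac,N_1},W_n^{\kac,N_1})$ to convergence of polynomial moments via uniform-in-$N_1$ $L^p$ bounds, which the paper isolates as \autoref{lem:LimWetaWgamma} (invoking \cite[Theorem 7.4.4]{delmoral:2004}); your variance argument coincides with the paper's. The genuine divergence is the bias step. You expand $1/\Punkac_n(1)$ to second order and are left with a remainder to control, whereas the paper sidesteps any expansion through the exact algebraic identity $\unkac_n(1)/\Punkac_n(1)-1=-N_1^{-1/2}\,W_n^{\unkac,N_1}(1)/\Punkac_n(1)$, then absorbs the surviving random denominator via $W_n^{\unkac,N_1}(\bar{f}_n)/\Punkac_n(1)=W_n^{\kac,N_1}(\func[n])$, where $\bar{f}_n \eqdef \func[n]-\kac_n(\func[n])$, obtaining the \emph{finite-$N_1$} identity
\begin{equation*}
N_1\,\esp{\Pkac_n(\func[n])-\kac_n(\func[n])} = -\dfrac{1}{\unkac_n(1)}\,\esp{W_n^{\unkac,N_1}(1)\,W_n^{\kac,N_1}(\func[n])} \eqsp,
\end{equation*}
with no remainder at all; \autoref{lem:LimWetaWgamma} then concludes. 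This is what the paper's route buys: your ``main obstacle'' never arises, because the random denominator is handled once and for all by the standard uniform $L^p$ bound on the normalized field (the paper's \eqref{eq:LpMeanErrorEta}), rather than by any analysis of a small-denominator event. Your route also closes, and more easily than you anticipate: writing the expansion with its \emph{exact} remainder, $1/(a+x)=1/a-x/a^2+x^2/\left(a^2(a+x)\right)$ with $a=\unkac_n(1)$ and $x=N_1^{-1/2}W_n^{\unkac,N_1}(1)$, the residual factor $1/(a+x)=1/\Punkac_n(1)$ recombines with $W_n^{\unkac,N_1}(\bar{f}_n)$ into $W_n^{\kac,N_1}(\func[n])$, so the remainder contribution equals $N_1^{-1/2}\unkac_n(1)^{-2}\,\esp{W_n^{\kac,N_1}(\func[n])\,(W_n^{\unkac,N_1}(1))^2}=O(N_1^{-1/2})$ by H\"older's inequality and the same uniform bounds \eqref{eq:LpMeanErrorGamma}--\eqref{eq:LpMeanErrorEta}; no isolation of the event where $\Punkac_n(1)$ is small is required. (One bookkeeping slip: in your displayed expansion the remainder term should carry the prefactor $1/\unkac_n(1)$, as the other two terms do; this does not affect the order of the estimate.)
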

When the bootstrap algorithm is applied, we get the following expressions for $B_n(\func[n])$ and $V_n(\func[n])$ using \autoref{theo:localAsympto}:
\label{coro:singleIslandAsymptoticsBootstrap}
\begin{align}
\label{eq:defBn-bootstrap}
&B_n(\func[n]) = -\sum_{p=0}^n \kac_p \left(\semiNorm_{p,n}(1)\semiNorm_{p,n}(\func[n]-\kac_n(\func[n]))\right) \eqsp, \\
\label{eq:defVn-bootstrap}
&V_n(\func[n]) = \sum_{p=0}^n \kac_p \left(\semiNorm_{p,n}(\func[n]-\kac_n(\func[n]))^2 \right)\eqsp.
\end{align}
\begin{proof}
See \autoref{subsec:ProofSingleIsland}.
\end{proof}

We now compute the bias and the variance for the double bootstrap algorithm. The asymptotic behavior of the bias and the variance is derived in the following theorem using techniques adapted from \cite{delmoral:2004}.
\begin{theorem}
\label{theo:InteractingAsymptotic}
For the double bootstrap algorithm, for any time horizon $n\geq 0$ and any $\func[n] \in \banach{n}$, we have
\begin{align*}
&\lim_{N_1 \rightarrow \infty}\lim_{N_2 \rightarrow \infty} N_1N_2 \esp{\PNkac_n(\omeas^{N_1}\func[n]) - \kac_n(\func[n])} = B_n(\func[n])+\widetilde{B}_n(\func[n]) \eqsp,\\
&\lim_{N_1 \rightarrow \infty}\lim_{N_2 \rightarrow \infty} N_1N_2 \var{\PNkac_n(\omeas^{N_1}\func[n])} = V_n(\func[n])+\widetilde{V}_n(\func[n]) \eqsp,
\end{align*}
where $B_n(\func[n])$ and $V_n(\func[n])$ are defined respectively in \eqref{eq:defBn} and in \eqref{eq:defVn},
and where $\widetilde{B}_n(\func[n])$ and $\widetilde{V}_n(\func[n])$ are given by:
\begin{align}
&\widetilde{B}_n(\func[n]) \eqdef  -\sum_{\ell=0}^{n} (n-\ell) \esp{W_\ell(\semiNorm_{\ell,n}(1)) W_\ell(\semiNorm_{\ell,n}(\func[n]-\kac_n(\func[n])))} \label{eq:defBtilden}\\
&  \quad \quad \quad + \sum_{\ell=0}^{n}  \esp{W_\ell\left(\sum_{p=\ell}^n\semiNorm_{\ell,p}(1)\right) W_\ell(\semiNorm_{\ell,n}(\func[n]-\kac_n(\func[n])))} \eqsp,  \notag \\
&\widetilde{V}_n(\func[n]) \eqdef \sum_{\ell=0}^{n} (n-\ell) \esp{W_\ell(\semiNorm_{\ell,n}(\func[n]-\kac_n(\func[n])))^2} \eqsp. \label{eq:defVtilden}
\end{align}
\end{theorem}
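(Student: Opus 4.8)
\noindent The plan is to treat the two iterated limits separately, exploiting the fact that they are taken successively (so no interchange is needed).

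\emph{Step 1 (inner limit $N_2\to\infty$).} First, for fixed $N_1$, I would observe that the double bootstrap is nothing but the bootstrap particle approximation $(\PNkac_n,\PNunkac_n)$ of the \emph{island} Feynman--Kac flow $(\Nkac_n,\Nunkac_n)$ associated with the potentials $\Npot[\ell]$ and kernels $\Ntrans{\ell+1}$. Since these potentials are bounded and the observables $\omeas^{N_1}\func[n]$ are bounded, the island-level analogue of \autoref{theo:localAsympto} holds and \autoref{theo:singleIslandAsymptotics} applies verbatim at the island level. Writing $\bar f \eqdef \func[n]-\kac_n(\func[n])$ and using \autoref{theo:unbiased} (so that $\Nkac_n(\omeas^{N_1}\func[n])=\kac_n(\func[n])$, whence $\omeas^{N_1}\func[n]-\Nkac_n(\omeas^{N_1}\func[n])=\omeas^{N_1}\bar f$) together with the bootstrap expressions \eqref{eq:defBn-bootstrap}--\eqref{eq:defVn-bootstrap} read off at the island level, this gives
\[
\lim_{N_2\to\infty} N_2\,\var{\PNkac_n(\omeas^{N_1}\func[n])} = \sum_{\ell=0}^n \Nkac_\ell\!\left[\big(\NsemiNorm_{\ell,n}(\omeas^{N_1}\bar f)\big)^2\right]\eqdef \boldsymbol{V}_n,
\]
\[
\lim_{N_2\to\infty} N_2\,\esp{\PNkac_n(\omeas^{N_1}\func[n])-\kac_n(\func[n])} = -\sum_{\ell=0}^n \Nkac_\ell\!\left[\NsemiNorm_{\ell,n}(1)\,\NsemiNorm_{\ell,n}(\omeas^{N_1}\bar f)\right]\eqdef \boldsymbol{B}_n.
\]

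\emph{Step 2 (an exact linearity identity).} The key simplification I would establish is that the island potential kernel sends linear observables to linear observables \emph{with no error}: for every bounded $f$,
\[
\NsemiNorm_{\ell,n}(\omeas^{N_1}f)(\bx_\ell) = \omeas^{N_1}(\bx_\ell)\big(\semiNorm_{\ell,n}f\big)\eqsp.
\]
I would prove this by downward induction on $\ell$: using $\Nsemi_{\ell,n}=\Nsemi_{\ell+1}\Nsemi_{\ell+1,n}$ and $\Nsemi_{\ell+1}(\bx_\ell,\cdot)=\Npot[\ell](\bx_\ell)\,\Ntrans{\ell+1}(\bx_\ell,\cdot)$, the selection factor $\Npot[\ell](\bx_\ell)=\omeas^{N_1}(\bx_\ell)(\pot[\ell])$ cancels exactly against the normalisation $\sum_k \pot[\ell](x_\ell^k)$ inside $\Ntrans{\ell+1}$, leaving $\omeas^{N_1}(\bx_\ell)(\semi_{\ell+1}\,\semi_{\ell+1,n}f)=\omeas^{N_1}(\bx_\ell)(\semi_{\ell,n}f)$; the scalar prefactor $\unkac_\ell(1)/\unkac_n(1)$ then turns $\semi$ into $\semiNorm$. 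This is a conditional, pathwise counterpart of \autoref{theo:unbiased}. In particular $\NsemiNorm_{\ell,n}(1)=\omeas^{N_1}(\semiNorm_{\ell,n}1)$, and since $\Nkac_\ell\NsemiNorm_{\ell,n}(\omeas^{N_1}\bar f)=\Nkac_n(\omeas^{N_1}\bar f)=\kac_n(\bar f)=0$, both $\boldsymbol{V}_n$ and $\boldsymbol{B}_n$ collapse to (co)variances of \emph{linear} empirical functionals under $\Nkac_\ell$.

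\emph{Step 3 (outer limit $N_1\to\infty$; the main obstacle).} I would then reduce everything to
\[
\Sigma_\ell(a,b)\eqdef \lim_{N_1\to\infty} N_1\,\mathrm{Cov}_{\Nkac_\ell}\!\big(\omeas^{N_1}a,\ \omeas^{N_1}b\big)\eqsp.
\]
The delicate point is that $\Nkac_\ell$ is the law of the within-island bootstrap configuration at time $\ell$ \emph{tilted} by $\prod_{q<\ell}\Npot[q](\epart{q})$ and renormalised. I would show the tilt is negligible at order $1/N_1$: writing the normalised tilt as $1+S_\ell$ with $S_\ell=\sum_{q<\ell}(\Pkac_q-\kac_q)(\pot[q])/\kac_q(\pot[q])+O(N_1^{-1})=O_{L^2}(N_1^{-1/2})$, and noting $\omeas^{N_1}(a-\kac_\ell a)\,\omeas^{N_1}(b-\kac_\ell b)=O_{L^2}(N_1^{-1})$ for the centred observables, Cauchy--Schwarz bounds the tilt contribution by $O(N_1^{-3/2})$. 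Hence $\Sigma_\ell$ coincides with the \emph{single-island} fluctuation covariance, and by \eqref{eq:defWetaLim} and the bootstrap covariance of \autoref{theo:localAsympto},
\[
\Sigma_\ell(a,b) = \esp{W_\ell^{\kac}(a)\,W_\ell^{\kac}(b)} = \sum_{p=0}^\ell \esp{W_p(\semiNorm_{p,\ell}(a-\kac_\ell a))\,W_p(\semiNorm_{p,\ell}(b-\kac_\ell b))}\eqsp.
\]
The real work is exactly this step: proving the central-limit behaviour and, crucially, the uniform integrability ($L^2$ convergence of the rescaled fluctuations) under the tilted island law, so that weak convergence upgrades to convergence of second moments. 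No interchange of limits is needed, the outer $N_1$ limit being taken on the already-computed deterministic functions $\boldsymbol{V}_n,\boldsymbol{B}_n$ of $N_1$.

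\emph{Step 4 (conclusion).} Finally, I would substitute $a=b=\semiNorm_{\ell,n}\bar f$ (which is $\kac_\ell$-centred) for the variance, and $a=\semiNorm_{\ell,n}1$, $b=\semiNorm_{\ell,n}\bar f$ for the bias, using $\semiNorm_{p,\ell}\semiNorm_{\ell,n}=\semiNorm_{p,n}$ and $\kac_\ell(\semiNorm_{\ell,n}1)=1$ (so $\semiNorm_{p,\ell}(\semiNorm_{\ell,n}1-1)=\semiNorm_{p,n}1-\semiNorm_{p,\ell}1$), to obtain
\[
\lim_{N_1\to\infty}N_1\boldsymbol{V}_n = \sum_{\ell=0}^n\sum_{p=0}^\ell \esp{W_p(\semiNorm_{p,n}\bar f)^2},\qquad \lim_{N_1\to\infty}N_1\boldsymbol{B}_n = -\sum_{\ell=0}^n\sum_{p=0}^\ell \esp{W_p(\semiNorm_{p,n}1-\semiNorm_{p,\ell}1)\,W_p(\semiNorm_{p,n}\bar f)}\eqsp.
\]
Exchanging the two summations, each index $p$ is counted for $\ell=p,\dots,n$, i.e. with weight $n-p+1=1+(n-p)$. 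The weight $1$ reproduces the single-island terms $V_n,B_n$ of \eqref{eq:defVn}, \eqref{eq:defBn}, while the weight $(n-p)$ together with the expansion $\sum_{\ell=p}^n(\semiNorm_{p,n}1-\semiNorm_{p,\ell}1)=(n-p+1)\semiNorm_{p,n}1-\sum_{\ell=p}^n\semiNorm_{p,\ell}1$ produces exactly the interaction corrections $\widetilde{V}_n,\widetilde{B}_n$ of \eqref{eq:defVtilden}, \eqref{eq:defBtilden}. This yields $V_n+\widetilde{V}_n$ and $B_n+\widetilde{B}_n$, as claimed. The combinatorial factor $(n-\ell)$ thus has a transparent origin: it counts the $n-\ell$ between-island resampling steps that propagate a fluctuation created at island-time $\ell$.
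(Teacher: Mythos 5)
Your proposal is correct and reproduces the paper's overall architecture: the inner $N_2$ limit via \autoref{theo:singleIslandAsymptotics} applied at the island level, your exact linearity identity (which is precisely the paper's \autoref{lem:semiLinear}, proved by the same cancellation of $\Npot[\ell]$ against the normalisation in $\Ntrans{\ell+1}$), and the same final sum exchange giving the weight $n-\ell+1 = 1+(n-\ell)$; your intermediate bias expression $-\sum_{\ell}\sum_{p\leq\ell}\esp{W_p(\semiNorm_{p,n}(1)-\semiNorm_{p,\ell}(1))W_p(\semiNorm_{p,n}(\func[n]-\kac_n(\func[n])))}$ matches the paper's verbatim. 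The one genuinely different step is your Step~3. Where you expand the normalised tilt $\prod_{q<\ell}\Pkac_q(\pot[q])/\unkac_\ell(1)=1+S_\ell$ and kill the correction term by Cauchy--Schwarz at order $O(N_1^{-3/2})$, the paper avoids any perturbative estimate through an exact algebraic identity (\autoref{lem:espTwoFunc}): since $\Pkac_\ell(a)\prod_{q<\ell}\Pkac_q(\pot[q])=\Punkac_\ell(a)$ and $\esp{\Punkac_\ell(a)}=\unkac_\ell(a)=0$ for $\kac_\ell$-centred $a$ by \autoref{theo:unbiased}, the tilted product moment equals $N_1^{-1}\esp{W_\ell^{\unkac,N_1}(a)W_\ell^{\kac,N_1}(b)}$ with no remainder whatsoever, and one passes to the limit by \autoref{lem:LimWetaWgamma}. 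The two answers agree because $W_\ell^{\unkac}(a)/\unkac_\ell(1)=W_\ell^{\kac}(a)$ when $\kac_\ell(a)=0$, so your "tilt is negligible" heuristic is exactly what the identity encodes. The paper's route is cleaner; yours is more robust in spirit (it would survive approximation schemes lacking exact unbiasedness) but carries two technical debts you should make explicit: bounding $\|\Pkac_\ell(\bar a)\Pkac_\ell(\bar b)\|_2$ requires $L^4$ (not merely $L^2$) moment bounds on the rescaled fluctuations, and the remainder in the expansion of the product of tilts must be controlled as $O_{L^2}(N_1^{-1})$; both follow from the same $L^p$ estimates \eqref{eq:LpMeanErrorGamma}--\eqref{eq:LpMeanErrorEta} of \cite[Theorem 7.4.4]{delmoral:2004} that the paper already invokes for the moment-convergence upgrade you correctly flag as the crux.
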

When the bootstrap algorithm is applied, we get the following expressions for $\widetilde{B}_n(\func[n])$ and $\widetilde{V}_n(\func[n])$ using \autoref{theo:localAsympto}:
\label{coro:singleIslandAsymptoticsBootstrap2}
\begin{align}
\label{eq:defBntilde-bootstrap}
&\widetilde{B}_n(\func[n]) = -\sum_{\ell=0}^{n} (n-\ell)~\kac_{\ell}\left((\semiNorm_{\ell,n}(1)-\kac_n(1))  \semiNorm_{\ell,n}(\func[n]-\kac_n(\func[n])) \right) \\
&  \quad \quad \quad + \sum_{\ell=0}^{n}  \kac_\ell \left( \left(\sum_{p=\ell}^n(\semiNorm_{\ell,p}(1)-\kac_p(1))\right) \semiNorm_{\ell,n}(\func[n]-\kac_n(\func[n])))\right) \eqsp,  \notag \\
\label{eq:defVntilde-bootstrap}
&\widetilde{V}_n(\func[n]) = \sum_{\ell=0}^{n} (n-\ell)~\kac_{\ell} \left(\semiNorm_{\ell,n}(\func[n]-\kac_n(\func[n]))^2 \right)\eqsp.
\end{align}
\begin{proof}
See \autoref{subsec:ProofInteractingIslands}.
\end{proof}


We can also consider the case where the $N_2$ islands are kept independent (a bootstrap filter is still applied within each island, but there is no interaction between islands). To that purpose, denote by $(\Ntpart[i]{n})_{i=1}^{N_2}$ $N_2$ independent islands of size $N_1$, each evolving using the bootstrap filter and, define the estimator of $\kac_n(\func[n])$ for any $\Nfunc[n] \in \Nbanach{n}$, given by the empirical mean across islands
$$\PNkactilde_n(\Nfunc[n]) \eqdef \frac{1}{N_2} \sum_{i=1}^{N_2} \Nfunc[n] (\Ntpart[i]{n}) \eqsp.$$
For functions $\Nfunc[n]$ on $\NXset{n}$ of the form $\Nfunc[n](\bx_{n})=\omeas^{N_1}\func[n](\bx_{n})$, with $\func[n]\in\banach{n}$, we have
$$\PNkactilde_n(\Nfunc[n]) = \frac{1}{N_2} \sum_{i=1}^{N_2} \omeas^{N_1}\func[n](\Ntpart[i]{n}) = \frac{1}{N_1N_2} \sum_{i=1}^{N_2} \sum_{j=1}^{N_1} \func[n](\Ntpart[i,j]{n})\eqsp.$$
The asymptotic behavior of the bias and variance of $\omeas^{N_1}\func[n](\Ntpart[i]{n})$ may be easily deduced from the one of $\Pkac_n(\func[n])$; \autoref{theo:singleIslandAsymptotics} implies that
\begin{theorem}
\label{theo:IndependentAsymptotic}
For any time horizon $n\geq 0$ and any $\func[n] \in \banach{n}$, we have
\begin{align*}
&\lim_{N_1 \rightarrow \infty} N_1 \left\{\esp{\PNkactilde_n(\omeas^{N_1}\func[n])} - \kac_n(\func[n])\right\} = B_n(\func[n]) \eqsp,\\
&\lim_{N_1 \rightarrow \infty} N_1N_2 \var{\PNkactilde_n(\omeas^{N_1}\func[n])} = V_n(\func[n]) \eqsp,
\end{align*}
where $B_n(\func[n])$ and $V_n(\func[n])$ are defined respectively in \eqref{eq:defBn} and \eqref{eq:defVn}.
\end{theorem}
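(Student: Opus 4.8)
The plan is to reduce the independent-island estimator to a plain empirical average of i.i.d.\ single-island estimators, after which both claims follow directly from \autoref{theo:singleIslandAsymptotics}. The key observation is that for a test function of the special form $\Nfunc[n](\bx_n) = \omeas^{N_1}\func[n](\bx_n)$, the per-island quantity $\omeas^{N_1}\func[n](\Ntpart[i]{n})$ is exactly the single-island bootstrap approximation $\Pkac_n(\func[n])$ computed on the $i$-th island. Since the $N_2$ islands $(\Ntpart[i]{n})_{i=1}^{N_2}$ are run independently with identical bootstrap dynamics, these per-island estimators are independent and identically distributed, each having the same law as the single-island estimator $\Pkac_n(\func[n])$ analysed in \autoref{theo:singleIslandAsymptotics}. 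Thus $\PNkactilde_n(\omeas^{N_1}\func[n]) = N_2^{-1}\sum_{i=1}^{N_2} \omeas^{N_1}\func[n](\Ntpart[i]{n})$ is the average of $N_2$ i.i.d.\ copies of $\Pkac_n(\func[n])$.

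For the bias, I would use linearity of expectation: averaging i.i.d.\ copies leaves the mean unchanged, so $\esp{\PNkactilde_n(\omeas^{N_1}\func[n])} = \esp{\Pkac_n(\func[n])}$, and hence the bias of the independent-island estimator coincides with the single-island bias $\esp{\Pkac_n(\func[n])} - \kac_n(\func[n])$ for every $N_2$. Multiplying by $N_1$ and letting $N_1 \to \infty$, \autoref{theo:singleIslandAsymptotics} yields the limit $B_n(\func[n])$; in particular no $N_2$-limit is involved.

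For the variance, I would invoke independence across islands: the variance of an average of $N_2$ i.i.d.\ terms is $N_2^{-1}$ times the variance of one term, so $\var{\PNkactilde_n(\omeas^{N_1}\func[n])} = N_2^{-1}\var{\Pkac_n(\func[n])}$. Consequently $N_1 N_2 \var{\PNkactilde_n(\omeas^{N_1}\func[n])} = N_1 \var{\Pkac_n(\func[n])}$, the factor $N_2$ cancelling exactly, and letting $N_1 \to \infty$ gives $V_n(\func[n])$ again by \autoref{theo:singleIslandAsymptotics}.

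There is no substantial obstacle here: the only point requiring care is the clean identification of $\omeas^{N_1}\func[n](\Ntpart[i]{n})$ with an i.i.d.\ copy of $\Pkac_n(\func[n])$, which rests on the mutual independence and identical distribution of the islands. Once this is established both statements are immediate; and, in contrast to the interacting double-bootstrap case of \autoref{theo:InteractingAsymptotic}, independence removes the $N_2$-dependence entirely from the bias and reduces it to the exact factor $N_2^{-1}$ in the variance, so that no interchange of the $N_1$ and $N_2$ limits is needed.
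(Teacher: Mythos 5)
Your proposal is correct and is essentially the paper's own argument: the paper derives \autoref{theo:IndependentAsymptotic} directly from \autoref{theo:singleIslandAsymptotics} by observing that $\omeas^{N_1}\func[n](\Ntpart[i]{n})$, $1 \leq i \leq N_2$, are i.i.d.\ copies of the single-island estimator $\Pkac_n(\func[n])$, so the bias is unchanged by averaging and the variance scales exactly by $N_2^{-1}$. Your remark that no interchange of the $N_1$ and $N_2$ limits is needed, in contrast to \autoref{theo:InteractingAsymptotic}, correctly captures why the paper treats this as an immediate consequence.
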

The variance of the particle approximation is inversely proportional to $N_1N_2$, but because the islands do not interact, the bias is independent of $N_2$ and is inversely proportional to $N_1$.

As shown by \autoref{theo:InteractingAsymptotic} and \autoref{theo:IndependentAsymptotic}, a trade-off has to be made between the bias and the variance to decide which of the two estimators $\PNkac_n$ and $\PNkactilde_n$ is the best. We can compare the mean squared error (MSE) when the islands interact or when they are kept independent. The MSE for independent islands is given by $\frac{V_n(\func[n])}{N_1N_2} + \frac{B_n(\func[n])^2}{N_1^2}$ whereas the MSE for the double bootstrap is given by $\frac{V_n(\func[n])+\widetilde{V}_n(\func[n])}{N_1N_2}$. Therefore,
$$ \frac{V_n(\func[n])+\widetilde{V}_n(\func[n])}{N_1N_2} < \frac{V_n(\func[n])}{N_1N_2} + \frac{B_n(\func[n])^2}{N_1^2}
 \quad \Leftrightarrow \quad N_1 < \frac{B_n(\func[n])^2}{\widetilde{V}_n(\func[n])} N_2 \eqsp.$$
Consequently, the double bootstrap algorithm outperforms the independent islands when the number of particles $N_1$ within each island is small compared to the number of islands $N_2$; the interaction improves the bias (which is independent of $N_2$ when the islands are kept independent). On the contrary, when $N_1$ is larger than $N_2$, the variance increase introduced by the interaction (because of the selection step) may be larger than the bias reduction.

\section{Extensions}
\label{sec:extensions}
In \autoref{sec:asymptotic} we have described and analyzed an interacting island model where the bootstrap algorithm is used both within and across the islands. Of course, other IPS approximations may be considered within and across islands. We will describe how the results of the previous sections may be adapted.
The IPS approximation of each individual island may be cast in the Feynman-Kac framework.
This section is devoted to check these conditions for various IPS approximations.

	\subsection{Epsilon-bootstrap interaction}
$\epsilon$-bootstrap interaction is a variant of the bootstrap, in which the selection step is slightly modified: only a fraction of the particles are resampled.
Let $ \Peps{n}$ be a nonnegative constant such that $ \Peps{n}~\left\|\pot[n]\right\|_{\infty}\in [0,1]$, where $\left\|\pot[n]\right\|_{\infty}=\sup_{x_n \in \Xset{n}}|\pot[n](x_n)|$.
For any measure $\mu_n \in \measSet{n}$, define $S_{n,\mu_n}$ the Markov kernel on $(\Xset{n},\Xfield{n})$ given for $x_n \in \Xset{n}$ and $A_n \in \Xfield{n}$ by
\begin{equation}\label{eq:defS}
S_{n,\mu_n}(x_n,A_n) \eqdef \Peps{n}~\pot[n](x_n) \delta_{x_n}(A_n)+ \left(1- \Peps{n}~\pot[n](x_n)\right)\PPsi{n}(\mu_{n})(A_n)\eqsp,
\end{equation}
where $\PPsi{n}$ is defined in \eqref{eq:defPsi}.
$\epsilon$-bootstrap interaction algorithm proceeds as follows. At iteration $n$, a particle $\epart[i]{n}$ is kept with a probability equal to $ \Peps{n}~\pot[n](\epart[i]{n})$ or resampled with a probability $ 1-\Peps{n}~\pot[n](\epart[i]{n})$. Resampling a particle consists in replacing it by a particle selected at random in the current population with weights proportional to their potential $(\pot[n](\epart[1]{n}),\dots, \pot[n](\epart[N_1]{n}))$. Then, each selected particle is independently updated according to the Markov kernel $\trans{n+1}$. When $\Peps{n}=0$, all the particles are resampled, which correspond to the bootstrap filter.
Define the Markov kernel $\Ntrans{n+1}(\bx_n,d\bx_{n+1})$ from $\NXset{n}$ into $\NXset{n+1}$ by
\begin{equation}\label{eq:defNtransEpsilon}
\Ntrans{n+1}(\bx_n,\rmd \bx_{n+1}) \eqdef \prod_{1\leq i\leq N_1}~S_{n,\Pkac_n} \trans{n+1} (x_n^i,\rmd x_{n+1}^i) \eqsp.
\end{equation}
Consider a Markov chain $(\epart{n})_{n \geq 0}$ where for each $n \in \nset$,
$\epart{n}=(\epart[1]{n},\dots,\epart[N_1]{n}) \in \NXset{n}$,
with initial distribution $\kac[0]$ and transition kernel $\Ntrans{n+1}$.
Define the same approximations of the measures $\kac_n$ and $\unkac_n$ as in \eqref{eq:approxDef_1} and \eqref{eq:approxDef_2}.
Then, consider the island Feynman-Kac model associated to the Markov chain \eqref{eq:defNGamma} and the potential function \eqref{eq:defG}. The associated sequence $\{(\Nkac[n],\Nunkac[n])\}_{n \geq 0}$ of Feynman-Kac measures is given for all $\Nfunc[n] \in \Nbanach{n}$ by
\begin{align}
\label{eq:defNkacEps}
&\Nkac_0(\Nfunc[0]) \eqdef \Nunkac_0(\Nfunc[0])= \esp{\Nfunc[0](\epart{0})} \eqsp,  \\
&\Nkac[n](\Nfunc[n])\eqdef \Nunkac[n](\Nfunc[n])/\Nunkac[n](1), \quad \text{for all} \ n \geq 1, \\
&\Nunkac[n](\Nfunc[n]) \eqdef \esp{\Nfunc[n](\epart{n})~\prod_{0\leq p<n}\Npot[p](\epart{p})} \eqsp, \quad \text{for all} \ n \geq 1.
\end{align}
 We may establish the following extension of \autoref{theo:unbiased}. Let $\{(x_p^1,\dots,(x_p^{N_1})\}_{0 \leq p \leq n}$ be a population of particles generated by the $\epsilon$-bootstrap interaction algorithm specified by \eqref{eq:defNtransEpsilon}; then,
\begin{theorem}
\label{ProofUnbiasedEpsilon}
For any $\Nfunc[n] \in \Nbanach{n}$ of the form $\Nfunc[n](\bx_n) = N_1^{-1} \sum_{i=1}^{N_1} \func[n](x_n^i) $ with $\func[n] \in \banach{n}$, we get
\begin{equation}
\Nunkac[n](\Nfunc[n]) = \unkac_n(\func[n]) \quad \mbox{and} \quad \Nkac[n](\Nfunc[n]) = \kac_n(\func[n])\eqsp.
\end{equation}
\end{theorem}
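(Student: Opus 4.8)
The plan is to follow the inductive scheme behind \autoref{theo:unbiased} and to reduce the whole statement to a single one-step identity that absorbs the modified selection step. Writing $\Nsemi[n+1](\bx_n,\rmd\bx_{n+1}) \eqdef \Npot[n](\bx_n)\,\Ntrans{n+1}(\bx_n,\rmd\bx_{n+1})$ for the island semigroup kernel, the Markov property of $(\epart{n})_{n\geq 0}$ gives $\Nunkac[n+1](\Nfunc[n+1]) = \Nunkac[n]\!\left(\Npot[n]\cdot\Ntrans{n+1}\Nfunc[n+1]\right)$. The base case $n=0$ is immediate: $\epart{0}$ is drawn from $\kac_0^{\otimes N_1}$, so for $\Nfunc[0](\bx_0) = N_1^{-1}\sum_i \func[0](x_0^i)$ one gets $\Nunkac[0](\Nfunc[0]) = N_1^{-1}\sum_i \kac_0(\func[0]) = \unkac_0(\func[0])$. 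The inductive step will be closed as soon as I show that $\bx_n \mapsto \Npot[n](\bx_n)\,\Ntrans{n+1}\Nfunc[n+1](\bx_n)$ is again of the admissible form $N_1^{-1}\sum_k h(x_n^k)$, with the per-coordinate function $h = \semi[n+1]\func[n+1] = \pot[n]\,\trans{n+1}\func[n+1]$; that is, the target identity
$$
\Npot[n](\bx_n)\,\Ntrans{n+1}\Nfunc[n+1](\bx_n)
= \frac{1}{N_1}\sum_{k=1}^{N_1} \semi[n+1]\func[n+1](x_n^k) \eqsp.
$$

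Granting this identity, the argument is routine. Since $\pot[n]$ is bounded (as $\Peps{n}\,\|\pot[n]\|_{\infty}\in[0,1]$) and $\trans{n+1}$ is Markov, the function $h$ lies in $\banach{n}$, so the induction hypothesis applies to the admissible function associated with $h$ and yields $\Nunkac[n]\!\left(\Npot[n]\cdot\Ntrans{n+1}\Nfunc[n+1]\right) = \unkac_n(\pot[n]\,\trans{n+1}\func[n+1]) = \unkac_n(\semi[n+1]\func[n+1]) = \unkac_{n+1}(\func[n+1])$, which is exactly the claim for the unnormalized measures. The normalized case then follows by specializing to $\func[n]\equiv 1$ (so that $\Nfunc[n]\equiv 1$), which gives $\Nunkac[n](1) = \unkac_n(1)$ and hence $\Nkac[n](\Nfunc[n]) = \Nunkac[n](\Nfunc[n])/\Nunkac[n](1) = \unkac_n(\func[n])/\unkac_n(1) = \kac_n(\func[n])$.

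The crux -- and the only place where the $\epsilon$-bootstrap differs from the plain bootstrap -- is the one-step identity, which I would prove by direct computation. Using the product form of $\Ntrans{n+1}$ in \eqref{eq:defNtransEpsilon} and setting $\psi \eqdef \trans{n+1}\func[n+1]$, one has $\Ntrans{n+1}\Nfunc[n+1](\bx_n) = N_1^{-1}\sum_i S_{n,\Pkac_n}\psi(x_n^i)$, where $\Pkac_n$ is the empirical measure $\omeas^{N_1}(\bx_n,\cdot)$ of the configuration $\bx_n$, so that by \eqref{eq:defS} and $\PPsi{n}(\Pkac_n)(\psi) = (\sum_k \pot[n](x_n^k)\psi(x_n^k))/(\sum_k \pot[n](x_n^k))$,
$$
S_{n,\Pkac_n}\psi(x_n^i) = \Peps{n}\,\pot[n](x_n^i)\,\psi(x_n^i) + \bigl(1 - \Peps{n}\,\pot[n](x_n^i)\bigr)\,\frac{\sum_k \pot[n](x_n^k)\,\psi(x_n^k)}{\sum_k \pot[n](x_n^k)} \eqsp.
$$
Summing over $i$, the two $\Peps{n}$-contributions cancel exactly: the first sums to $\Peps{n}\sum_k \pot[n](x_n^k)\psi(x_n^k)$, while the $\Peps{n}$-part of the second sums to $-\Peps{n}\bigl(\sum_k\pot[n](x_n^k)\bigr)$ times the ratio $(\sum_k \pot[n](x_n^k)\psi(x_n^k))/(\sum_k \pot[n](x_n^k))$, i.e. $-\Peps{n}\sum_k\pot[n](x_n^k)\psi(x_n^k)$. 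What remains is $\Ntrans{n+1}\Nfunc[n+1](\bx_n) = (\sum_k \pot[n](x_n^k)\psi(x_n^k))/(\sum_k\pot[n](x_n^k))$, and multiplying by $\Npot[n](\bx_n) = N_1^{-1}\sum_k\pot[n](x_n^k)$ cancels the denominator to give $N_1^{-1}\sum_k \pot[n](x_n^k)\,\trans{n+1}\func[n+1](x_n^k) = N_1^{-1}\sum_k \semi[n+1]\func[n+1](x_n^k)$, independently of $\Peps{n}$. This cancellation is the heart of the matter: it shows the one-step unbiasedness is insensitive to the fraction of particles retained in the selection step, so the analysis collapses onto the bootstrap computation and the induction goes through verbatim.
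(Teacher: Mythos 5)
Your proof is correct and takes essentially the same route as the paper: your cancellation of the $\Peps{n}$-terms after summing over $i$ is exactly the paper's Lemma~\ref{lem:propertyS}, $\PPsi{n}(\mu_{n})=\mu_n S_{n,\mu_n}$, proved inline and specialized to the empirical measure $\mu_n=\Pkac_n$. The rest of your argument --- the one-step identity $\Npot[n]\,\Ntrans{n+1}\Nfunc[n+1]=\omeas^{N_1}\semi[n+1]\func[n+1]$, the induction on $n$ via the Feynman-Kac recursion, and the specialization $\func[n]\equiv 1$ to get the normalized case --- is the same telescoping conditional-expectation computation that the paper carries over verbatim from \autoref{theo:unbiased}.
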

\begin{proof}
See \autoref{subsec:ProofUnbiasedEpsilon}.
\end{proof}
For each $n \in \NN$, let $(\Nepart[1]{n},\dots,\Nepart[N_2]{n}) \in \NXset{n}^{N_2}$ be a population of $N_2$ islands each of $N_1$ individuals. The process $(\Nepart[1]{n},\dots,\Nepart[N_2]{n})$ is a Markov chain evolving according to selection and mutation steps, defined as follows
\begin{itemize}
\item \textit{Selection step}: each island $\Nepart[i]{n}$ is kept with a probability equal to $ \epsilon_{n}~\Npot[n](\Nepart[i]{n})$ or resampled with a probability $ 1-\Peps{n}~\Npot[n](\Nepart[i]{n})$. Resampling an island consists in replacing it by an island selected at random in the current population with weights proportional to their potential $(\Npot[n](\Nepart[1]{n})),\dots, \Npot[n](\Nepart[N_1]{n}))$.
\item \textit{Mutation step:} each selected island is updated independently according to the Markov transition $\Ntrans{n+1}$.
\end{itemize}
These islands particles allow to build the $N_2$-particle approximation of the measures $\Nkac[n]$ and $\Nunkac[n]$, for any $\Nfunc[n] \in \Nbanach{n}$, as
\begin{align*}
&\NPkac_n(\Nfunc[n])\eqdef \frac{1}{N_2}\sum_{i=1}^{N_2} \Nfunc[n](\Nepart[i]{n}) \eqsp, \\
&\NPunkac_n(\Nfunc[n])\eqdef \NPkac_n(\Nfunc[n])~\prod_{0\leq p<n}\NPkac_p(\Npot[p]) = \NPkac_n(\Nfunc[n])~\NPunkac_{n}(1) \eqsp.
\end{align*}
For this selection scheme, the following results, adapted from \cite[Corollary 9.3.1, pp. 295-298]{delmoral:2004}, establishes the convergence of $(W_p^{N_1})_{1 \leq p \leq n}$ to centered Gaussian fields:
\begin{theorem}
\label{theo:localAsymptoEpsilon}
For the $\Peps{n}$-bootstrap filter, for any fixed time horizon $n \geq 1$, the sequence $(W_p^{N_1})_{1 \leq p \leq n}$ defined in \eqref{eq:defW} converges in law, as $N_1$ goes to infinity, to a sequence of $n$ independent centered Gaussian random fields $(W_p)_{0 \leq p \leq n}$ with variance given, for any bounded function $\func[p] \in \banach{p}$, and $1 \leq p \leq n$, by
\begin{equation}
\label{eq:definition-moment-Wp_epsilon}
\esp{W_p(\func[p])^2} = \kac_{p-1} \left[S_{p-1,\kac_{p-1}}\trans{p}\func[p]^2-\left(S_{p-1,\kac_{p-1}} \trans{p}\func[p]\right)^2\right]
\eqsp.
\end{equation}
\end{theorem}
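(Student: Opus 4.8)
The plan is to recognise the rescaled local errors $(W_p^{N_1})_{1\le p\le n}$ as a triangular array of martingale increments with respect to the natural filtration $\Fa_p^{N_1}\eqdef\sigma(\epart{0},\dots,\epart{p})$ of the island particle chain, and then to invoke the martingale central limit theorem exactly as in \cite[Corollary 9.3.1]{delmoral:2004}; the only genuinely new work is to carry the measure-dependent selection kernel $S_{n,\Pkac_n}$ through the two places where it enters. The first step is algebraic: because the transition \eqref{eq:defNtransEpsilon} has product form, the particles $(\epart[i]{p})_{i=1}^{N_1}$ are, conditionally on $\Fa_{p-1}^{N_1}$, independent with respective laws $S_{p-1,\Pkac_{p-1}}\trans{p}(\epart[i]{p-1},\cdot)$. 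I would then verify the identity $\mu S_{n,\mu}=\PPsi{n}(\mu)$ for every $\mu\in\measSet{n}$ — a one-line computation splitting $S_{n,\mu}$ into its retained and resampled parts and using $\mu(1-\Peps{n}\pot[n])=1-\Peps{n}\mu(\pot[n])$ — so that $\mu S_{n,\mu}\trans{n+1}=\PPsi{n}(\mu)\trans{n+1}=\nlsemigrp[n+1](\mu)$. Applied with $\mu=\Pkac_{p-1}$ this yields $\cesp{\Pkac_p}{\Fa_{p-1}^{N_1}}=\nlsemigrp[p](\Pkac_{p-1})$, i.e. each $W_p^{N_1}$ is exactly $\Fa_{p-1}^{N_1}$-conditionally centred.

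Next I would compute the predictable quadratic variation. Writing $W_p^{N_1}(\func[p])=N_1^{-1/2}\sum_{i=1}^{N_1}\bigl[\func[p](\epart[i]{p})-S_{p-1,\Pkac_{p-1}}\trans{p}\func[p](\epart[i]{p-1})\bigr]$ as a sum of conditionally independent, conditionally centred terms, the conditional variance collapses to the average of the per-particle conditional variances, giving $\cesp{W_p^{N_1}(\func[p])^2}{\Fa_{p-1}^{N_1}}=\Pkac_{p-1}\bigl[S_{p-1,\Pkac_{p-1}}\trans{p}\func[p]^2-(S_{p-1,\Pkac_{p-1}}\trans{p}\func[p])^2\bigr]$. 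The limit \eqref{eq:definition-moment-Wp_epsilon} then follows by replacing $\Pkac_{p-1}$ by its limit $\kac_{p-1}$; this is where the empirical measure appears simultaneously as the integrating measure and inside the kernel $S_{p-1,\Pkac_{p-1}}$, so I would argue that $\mu\mapsto\mu[S_{p-1,\mu}\trans{p}\func[p]^2-(S_{p-1,\mu}\trans{p}\func[p])^2]$ is a bounded continuous functional of $\mu$ (it depends on $\mu$ only through the map $\mu\mapsto\PPsi{p-1}(\mu)$, which is continuous wherever $\mu(\pot[p-1])>0$) and combine this with the law of large numbers $\Pkac_{p-1}\to\kac_{p-1}$ that already underlies \autoref{theo:unbiased}.

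Finally I would close the argument with the martingale CLT. Joint convergence of $(W_p^{N_1})_{1\le p\le n}$ to independent centred Gaussian fields reduces to three ingredients: the conditionally-centred property just established; convergence of the conditional variances to the deterministic limits above; and a conditional Lindeberg condition, which is immediate here because $\func[p]\in\banach{p}$ forces each increment $N_1^{-1/2}[\func[p](\epart[i]{p})-\cdots]$ to be bounded by $2\|\func[p]\|_\infty/\sqrt{N_1}\to0$, so the truncated second moments vanish for every fixed $n$; the $p=0$ term $W_0^{N_1}=\sqrt{N_1}(\Pkac_0-\kac_0)$ is handled separately by the ordinary CLT for the i.i.d.\ initial sample from $\kac_0$. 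Asymptotic independence across distinct times is automatic from the martingale-difference structure: for $p<q$ the cross term $\esp{W_p^{N_1}(\varphi)\,W_q^{N_1}(\psi)}$ vanishes after conditioning on $\Fa_{q-1}^{N_1}$. I expect the main obstacle to be the second step — the double occurrence of $\Pkac_{p-1}$ in the conditional variance — since, unlike the plain bootstrap of \autoref{theo:localAsympto} where the limiting variance is simply $\kac_p[(\func[p]-\kac_p\func[p])^2]$, here one must justify the simultaneous passage to the limit in the outer measure and in the $\mu$-dependent kernel $S_{p-1,\mu}$; the continuity of $\PPsi{p-1}$ on the relevant set of measures is precisely what makes this routine rather than delicate.
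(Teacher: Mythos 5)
Your proposal is correct and takes essentially the paper's route: the paper establishes this theorem only by invoking \cite[Corollary 9.3.1]{delmoral:2004}, whose proof is precisely the martingale CLT for conditionally centred local sampling errors that you reconstruct, here applied with the McKean transition $K_{p,\mu}=S_{p-1,\mu}\trans{p}$, so that the conditional variance $\Pkac_{p-1}\bigl[S_{p-1,\Pkac_{p-1}}\trans{p}\func[p]^2-(S_{p-1,\Pkac_{p-1}}\trans{p}\func[p])^2\bigr]$ converges to \eqref{eq:definition-moment-Wp_epsilon}. Your fixed-point identity $\mu S_{n,\mu}=\PPsi{n}(\mu)$, which yields the conditional centring $\cesp{\Pkac_p}{\mathcal{F}^{N_1}_{p-1}}=\nlsemigrp[p](\Pkac_{p-1})$, is exactly the paper's own \autoref{lem:propertyS} (proved in \autoref{subsec:ProofUnbiasedEpsilon}), so nothing in your argument departs from the intended proof.
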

This variance is smaller than the variance of the bootstrap algorithm.
\begin{proposition}
\label{cor:choice-epsilon}
The asymptotic variance of $\Pkac[n]$ is smaller with respect to a non-zero sequence $(\Peps{p})_{0 \leq p \leq n-1}$ introduced in \eqref{eq:defS} than in the bootstrap algorithm.
\end{proposition}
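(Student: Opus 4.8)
The plan is to compare the two asymptotic variances term by term in the representation \eqref{eq:defVn}. The crucial observation is that the potential kernels $\semiNorm_{p,n}$ entering \eqref{eq:defVn} are attached to the underlying Feynman--Kac model \eqref{eq:defEta}--\eqref{eq:defGamma} through \eqref{eq:definition-potential-kernel}, and are therefore the \emph{same} for the bootstrap and for the $\epsilon$-bootstrap filter; only the variances of the limiting local fields $(W_p)$ differ, being given by \eqref{eq:definition-moment-Wp} in the first case and by \eqref{eq:definition-moment-Wp_epsilon} in the second. Setting $h\eqdef\semiNorm_{p,n}(\func[n]-\kac_n(\func[n]))$, it thus suffices to prove, for every $1\leq p\leq n$ and every bounded $h$, that
\[
\kac_{p-1}\left[S_{p-1,\kac_{p-1}}\trans{p}h^2-\left(S_{p-1,\kac_{p-1}}\trans{p}h\right)^2\right]\ \leq\ \kac_p\left[\left(h-\kac_p h\right)^2\right]\eqsp,
\]
the $p=0$ contributions coinciding since both algorithms initialise the islands with i.i.d.\ draws from $\kac_0$.

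The key step is the identity $\kac_{p-1}\,S_{p-1,\kac_{p-1}}=\PPsi{p-1}(\kac_{p-1})$, valid for \emph{any} value of $\Peps{p-1}$. Integrating \eqref{eq:defS} against $\kac_{p-1}$ and using the identity $\pot[p-1]\,\kac_{p-1}=\kac_{p-1}(\pot[p-1])\,\PPsi{p-1}(\kac_{p-1})$ between measures, the retained part contributes $\Peps{p-1}\,\kac_{p-1}(\pot[p-1])\,\PPsi{p-1}(\kac_{p-1})$ and the resampled part contributes $\bigl(1-\Peps{p-1}\,\kac_{p-1}(\pot[p-1])\bigr)\PPsi{p-1}(\kac_{p-1})$, the two weights summing to one; hence the dependence on $\Peps{p-1}$ cancels. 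Writing $K\eqdef S_{p-1,\kac_{p-1}}\trans{p}$ and composing with $\trans{p}$, the transport relation \eqref{eq:transportPhi} gives $\kac_{p-1}K=\PPsi{p-1}(\kac_{p-1})\trans{p}=\kac_p$; in particular $\kac_{p-1}(Kh^2)=\kac_p(h^2)$ and $\kac_{p-1}(Kh)=\kac_p(h)$.

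The comparison then follows from a conditional Jensen (variance decomposition) argument. The right-hand side above equals $\kac_p(h^2)-(\kac_p h)^2=\kac_{p-1}(Kh^2)-(\kac_{p-1}Kh)^2$, whereas the left-hand side equals $\kac_{p-1}(Kh^2)-\kac_{p-1}((Kh)^2)$; subtracting,
\[
\kac_p\left[(h-\kac_p h)^2\right]-\kac_{p-1}\left[Kh^2-(Kh)^2\right]=\kac_{p-1}\left[\left(Kh-\kac_p h\right)^2\right]\ \geq\ 0\eqsp.
\]
Summing over $0\leq p\leq n$ yields $V_n^{\epsilon}(\func[n])\leq V_n(\func[n])$, and the inequality is strict whenever $S_{p-1,\kac_{p-1}}\trans{p}h$ fails to be $\kac_{p-1}$-almost surely constant for some $p$, which is the generic case once $(\Peps{p})_{0\leq p\leq n-1}$ is non-zero.

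The only delicate point is the bookkeeping of the first paragraph: one must check that the two local-variance formulas \eqref{eq:definition-moment-Wp} and \eqref{eq:definition-moment-Wp_epsilon} are evaluated against the \emph{same} test function $h=\semiNorm_{p,n}(\func[n]-\kac_n(\func[n]))$ and aligned at the same field index $p$ (note the apparent index shift $p\mapsto p-1$ in \eqref{eq:definition-moment-Wp_epsilon}, which is reconciled by $\kac_{p-1}K=\kac_p$ and indeed reduces \eqref{eq:definition-moment-Wp_epsilon} to \eqref{eq:definition-moment-Wp} when $\Peps{p-1}=0$). Once this alignment is fixed, the remaining work is the elementary cancellation and Jensen step carried out above.
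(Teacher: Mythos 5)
Your proposal is correct and takes essentially the same route as the paper: your key identity $\kac_{p-1}S_{p-1,\kac_{p-1}}=\PPsi{p-1}(\kac_{p-1})$ is exactly \autoref{lem:propertyS}, and your conditional-Jensen step reproduces the paper's final decomposition of the $\Peps{}$-bootstrap variance as the bootstrap variance minus the nonnegative term $\kac_{p-1}\left[\left(S_{p-1,\kac_{p-1}}\trans{p}h-\PPsi{p-1}(\kac_{p-1})(\trans{p}h)\right)^2\right]$. The differences are purely presentational (you make explicit the termwise alignment of the local fields in \eqref{eq:defVn} and the reduction of \eqref{eq:definition-moment-Wp_epsilon} to \eqref{eq:definition-moment-Wp} at $\Peps{p-1}=0$, which the paper leaves implicit).
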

\begin{proof}
The proof is given in \autoref{subsec:ProofEpsilon}.
\end{proof}
For example, for $\Peps{p} = \left( \mathrm{essup}_{\kac[p]}(\pot[p]) \right)^{-1}, \eqsp 0 \leq p \leq n$ the asymptotic variance of $\Pkac_n(\func[n]), \eqsp 0 \leq p \leq n$ is lower than for the bootstrap. We can also adapt it at the island level. For instance, \autoref{alg:IPSEpsilonESS} describes the $\Peps{p} = \left(\displaystyle \max_{1 \leq j \leq N_1} \Npot[p](\Nepart[j]{p}) \right)^{-1}$-bootstrap islands interaction with ESS filter within the islands.
\begin{algorithm}[!h]
    \begin{algorithmic}[1]
        \caption{\quad ESS within $\Peps{p}$-bootstrap interaction for $\Peps{p} = \left(\mathrm{essup}_{\kac_p^{N_1}}(\pot[p]) \right)^{-1}$} \label{alg:IPSEpsilonESS}
        \State \underline{Initialization:}
        \For{$i$ from $1$ to $N_2$}
        \State Set $\Neweight[i]{0} = \left(\Nsmallweight{i,j}{0}\right)_{j=1}^{N_1} = (1,\dots,1)$.
        \State Sample $\Nepart[i]{0}=\left(\Nsmallpart{i,j}{0}\right)_{j=1}^{N_1}$ independently distributed according to $\kac_0$.
        \EndFor
        \For{$p$ from $0$ to $n-1$}
            \State \underline{Island selection step:}
            \For{$i$ from $1$ to $N_2$}
            \begin{itemize}
            \item With probability $\Npot[p](\Nepart[i]{p})~/\displaystyle \max_{1 \leq k \leq N_2} \Npot[p](\Nepart[k]{p})$, set $I_p^i=i$.
            \item With probability  $1-\Npot[p](\Nepart[i]{p})~/\displaystyle \max_{1 \leq k \leq N_2} \Npot[p](\Nepart[k]{p})$, sample $I_p^i$ multinomially with probability proportional to $\{\Npot[p](\Nepart[l]{p})~/ \sum_{k=1}^{N_2} \Npot[p](\Nepart[k]{p})\}_{l=1}^{N_2}$.
            \end{itemize}
            \EndFor
            \State \underline{Island mutation step:}
            \For{$i$ from $1$ to $N_2$}
                \State \underline{Particle selection and weight updating within each island:}
                \State Set $N_1^{\mathrm{eff}} = \left(\sum_{j=1}^{N_1}\omega_p^{I_p^i,j}\pot[p](\Nsmallpart{I_p^i,j}{p})\right)^2/\sum_{j=1}^{N_1}\left(\omega_p^{I_p^i,j}\pot[p](\Nsmallpart{I_p^i,j}{p})\right)^2$.
                \If{$N_1^{\mathrm{eff}} \geq \alpha_\mathrm{Particles} N_1$}
                    \State For $1 \leq j \leq N_1$, set $\omega_{p+1}^{i,j} = \omega_p^{I_p^i,j} \pot[p](\Nsmallpart{I_p^i,j}{p})$.
                    \State Set $\boldsymbol{J}_p^i = (J_p^{i,j})_{j=1}^{N_1} = (1, 2, \dots, N_1)$.
                \Else
                    \State Set $\boldsymbol{\omega}_{p+1}^i = \left( \omega_{p+1}^{i,j} \right)_{j=1}^{N_1} = (1, \dots, 1)$.
                    \State Sample $\boldsymbol{J}_p^i = (J_p^{i,j})_{j=1}^{N_1}$ multinomially
            with probability proportional to $\left(\omega_p^{I_p^i,j}\pot[p](\Nsmallpart{I_p^i,j}{p})\right)_{j=1}^{N_1}$.
                \EndIf
            \State \underline{Particle mutation:}
                \State For $1 \leq j \leq N_1$, sample independently $\Nsmallpart{i,j}{p+1}$ according to $\trans{p+1}(\Nsmallpart{I_p^i,L_p^{i,j}}{p},\cdot)$,
                where $L_p^{i,j}=J_p^{I_p^i,j}$.
            \EndFor
        \EndFor
        \State Approximate $\kac[n](\func[n])$ by $\displaystyle \dfrac{1}{N_2\sum_{j=1}^{N_1} \omega_n^{i,j}} \sum_{i=1}^{N_2} \sum_{j=1}^{N_1} \omega_n^{i,j} \func[n]\left(\Nsmallpart{i,j}{n}\right)$.
    \end{algorithmic}
\end{algorithm}

    \subsection{Effective Sample Size interaction}
	\label{subsec:ESS}
	We describe the particle approximation of the probabilities $(\kac[n])_{n \geq 0}$ using the effective sample size (ESS) method introduced in \cite{liu:chen:1995}; see also \cite{liu:2001}, \cite{delmoral:doucet:jasra:2012} and \cite{douc:moulines:2008}.
The difference with the bootstrap filter stems from the selection step of the current particles which is not performed at each step, but only when the importance weights do not satisfy some appropriately defined criterion. Contrary to the bootstrap filter, we now keep both the particles and the weights.
Denote by $x_n^i$ a particle and $w_n^{i}$ its associated weight, assumed to be nonnegative. For a weighted sample $\{(w_n^{i},x_n^{i})\}_{i=1}^{N_1}$, the criterion
\[
\left(\sum_{i=1}^{N_1}w_n^i\pot[n](x_n^i)\right)^2 / \sum_{i=1}^{N_1}\left(w_n^i\pot[n](x_n^i)\right)^2
\]
is the \emph{effective sample size} (ESS).
The algorithm goes as follows. When the ESS is less than $\alpha N_1$, for some $\alpha \in (0,1)$, the particles are multinomially resampled with probabilities proportional to their weights times their potential functions and the weights are all reset to 1. When the ESS is greater than $\alpha N_1$, then the weights are simply multiplied by the potential function. The selected particles are then updated using the transition kernel $\trans{n+1}$. For any nonnegative integer $p$ we set $(\NXset{p},\NXfield{p})\eqdef ((\Xset{p}\times\RR^+)^{N_1},(\Xfield{p}\otimes\RPfield)^{\otimes N_1})$.
Introduce the following set
$$
\Theta_{n,\alpha} = \left\{ \bx_n = \left[ (x_n^1,w_n^1),\dots,(x_n^{N_1},w_n^{N_1})\right]\in \NXset{n} \middle| \dfrac{\left(\sum_{i=1}^{N_1}w_n^i\pot[n](x_n^i)\right)^2}{\sum_{i=1}^{N_1}\left(w_n^i\pot[n](x_n^i)\right)^2} \geq \alpha N_1 \right\} \eqsp.
$$
Define the Markov kernel $\Ntrans{n+1}$ from $\NXset{n}$ into $\NXset{n+1}$   by
\begin{multline}\label{eq:defNtransESS}
\Ntrans{n+1}(\bx_n,\rmd\bx_{n+1}) \eqdef \one_{\Theta_{n,\alpha}}(\bx_n) \left[ \prod_{1\leq i\leq N_1} \delta_{w_n^i\pot[n](x_n^i)}(\rmd w_{n+1}^i)\trans{n+1}(x_n^i,\rmd x^i_{n+1}) \right]\\
+ \one_{\Theta_{n,\alpha}^\mathrm{c}}(\bx_n) \left[ \prod_{1\leq i\leq N_1} \delta_{1}(w_{n+1}^i)
\sum_{j=1}^{N_1} \dfrac{w_{n}^j \pot[n](x_n^j)}{\sum_{k=1}^{N_1} w_{n}^k \pot[n](x_n^k)} \trans{n+1}(x_n^j,\rmd x^i_{n+1}) \right] \eqsp,
\end{multline}
where $\bx_n = \left[ (x_n^1,w_n^1),\dots,(x_n^{N_1},w_n^{N_1}) \right]\in \NXset{n}$ and $\Theta_{n,\alpha}^\mathrm{c}$ is the complement of $\Theta_{n,\alpha}$.
We define a Markov chain $(\epart{n})_{n \geq 0}$ where for each $n \in \nset$,
\begin{equation}\label{eq:defSingleParticlesESS}
\epart{n}=\left[ (\epart[1]{n},\eweight[1]{n}),\dots,(\epart[N_1]{n},\eweight[N_1]{n}) \right] \in \NXset{n} \eqsp,
\end{equation}
with initial distribution $\Nkac[0] \eqdef (\kac[0] \otimes \delta_1)^{\otimes N_1}$ and transition kernel $\Ntrans{n+1}$.
Equation \eqref{eq:decompositionEta} suggests the following $N_1$-particle approximations of the measures $\kac_n$ and $\unkac_n$ defined for $\func[n] \in \banach{n}$ by
\begin{align}\label{eq:approxDefESS}
&\Pkac_n(\func[n]) \eqdef \frac{1}{\sum_{i=1}^{N_1} \eweight[i]{n}}\sum_{i=1}^{N_1} \eweight[i]{n}\func[n]\left(\epart[i]{n}\right) = \omeas^{N_1}\func[n](\epart{n})  \eqsp,\\
\label{eq:approxDefESS-gamma}
&\Punkac_n(\func[n])\eqdef \Pkac_n(\func[n])~\prod_{0\leq p<n}\Pkac_p(\pot[p])=\Pkac_n(\func[n])~\Punkac_{n}(1)\eqsp,
\end{align}
where $\omeas^{N_1}$ stands for the operator given for any $\func[n] \in \banach{n}$ by
$$
\omeas^{N_1}\func[n](\bx_n)\eqdef\frac{1}{\sum_{i=1}^{N_1} w_n^i}\sum_{i=1}^{N_1} w_n^i\func[n](x_n^i)\eqsp.
$$
For $\bx_n =((x_n^1,w_n^1), \dots, (x_n^{N_1},w_n^{N_1})) \in \NXset{n}$, define the potential function
\begin{equation}\label{eq:defGESS}
\Npot[n](\bx_n)\eqdef \omeas^{N_1}\pot[n](\bx_n) = \frac{1}{\sum_{i=1}^{N_1} w_n^i}\sum_{i=1}^{N_1} w_n^i\pot[n]\left(x_n^i\right)\eqsp.
\end{equation}
We consider the island Feynman-Kac model associated to the Markov chain \eqref{eq:defNtransESS} and the potential function \eqref{eq:defGESS}. The associated sequence $\{(\Nkac[n],\Nunkac[n])\}_{n \geq 0}$ of Feynman-Kac measures is given for all $\Nfunc[n] \in \Nbanach{n}$ by
\begin{align}
\label{eq:defNkacESS}
&\Nkac_0(\Nfunc[0]) \eqdef \Nunkac_0(\Nfunc[0])= \esp{\Nfunc[0](\epart{0})} \eqsp,  \\
&\Nkac[n](\Nfunc[n])\eqdef \Nunkac[n](\Nfunc[n])/\Nunkac[n](1), \quad \text{for all} \ n \geq 1, \\
&\Nunkac[n](\Nfunc[n]) \eqdef \esp{\Nfunc[n](\epart{n})~\prod_{0\leq p<n}\Npot[p](\epart{p})} \eqsp, \quad \text{for all} \ n \geq 1.
\end{align}
\begin{theorem}
\label{prop:unbiasedESS}
For a particle system $\bx_n =((x_n^1,w_n^1), \dots, (x_n^{N_1},w_n^{N_1})) \in \NXset{n}$ generated by the ESS algorithm and for any $\Nfunc[n] \in \Nbanach{n}$ of the form
$$
\Nfunc[n](\bx_n) = \left(\sum_{i=1}^{N_1} w_n^i\right)^{-1}\sum_{i=1}^{N_1} w_n^i\func[n]\left(x_n^i\right)
$$
where $\func[n] \in \banach{n}$,
\begin{equation}
\Nunkac[n](\Nfunc[n]) = \unkac_n(\func[n]) \quad \mbox{and} \quad \Nkac[n](\Nfunc[n]) = \kac_n(\func[n])\eqsp.
\end{equation}
\end{theorem}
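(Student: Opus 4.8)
The plan is to reduce the two claimed identities to a single unbiasedness statement for the within-island unnormalized estimator $\Punkac_n$, and then prove that statement by induction on $n$. First, evaluating the island test function and island potential along the actual particle chain gives, by \eqref{eq:approxDefESS} and \eqref{eq:defGESS}, $\Nfunc[n](\epart{n}) = \omeas^{N_1}\func[n](\epart{n}) = \Pkac_n(\func[n])$ and $\Npot[p](\epart{p}) = \Pkac_p(\pot[p])$. Substituting into the definition of $\Nunkac[n]$ in \eqref{eq:defNkacESS} and recalling \eqref{eq:approxDefESS-gamma}, one obtains
\[
\Nunkac[n](\Nfunc[n]) = \esp{\Pkac_n(\func[n])\prod_{0\leq p<n}\Pkac_p(\pot[p])} = \esp{\Punkac_n(\func[n])} \eqsp.
\]
Hence it suffices to prove $\esp{\Punkac_n(\func[n])} = \unkac_n(\func[n])$ for every $\func[n] \in \banach{n}$; the normalized identity will follow at the end. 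The induction will rest on the one-step identity
\[
\cesp{\Punkac_{n+1}(\func[n+1])}{\Fa_n} = \Punkac_n(\semi[n+1]\func[n+1]) \eqsp,
\]
where $\Fa_n$ is the $\sigma$-field generated by the island system up to time $n$ and $\semi[n+1]\func[n+1] = \pot[n]\,\trans{n+1}\func[n+1]$ as in \eqref{eq:defQ}.

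Since $\prod_{0\leq p<n}\Pkac_p(\pot[p])$ and $\Pkac_n(\pot[n])$ are $\Fa_n$-measurable, this one-step identity is equivalent, after cancelling these factors, to the single relation
\[
\cesp{\Pkac_{n+1}(\func[n+1])}{\Fa_n} = \frac{\Pkac_n(\pot[n]\,\trans{n+1}\func[n+1])}{\Pkac_n(\pot[n])} = \nlsemigrp[n+1](\Pkac_n)(\func[n+1]) \eqsp,
\]
with $\nlsemigrp[n+1]$ the nonlinear semigroup \eqref{eq:definition-nlsemigroup-island}. Establishing this relation is the heart of the proof and I expect it to be the main obstacle, because $\Pkac_{n+1}$ is a \emph{ratio} whose normalization $\sum_i w_{n+1}^i$ is a priori random, and the conditional expectation of a ratio is not the ratio of conditional expectations. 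The key observation that unlocks the computation is that, for the ESS kernel \eqref{eq:defNtransESS}, this normalization is in fact $\Fa_n$-measurable in both branches: on the no-resampling event $\Theta_{n,\alpha}$ the weights update deterministically via $w_{n+1}^i = w_n^i \pot[n](x_n^i)$, so $\sum_i w_{n+1}^i = \sum_i w_n^i\pot[n](x_n^i)$, whereas on the complement $\Theta_{n,\alpha}^{\mathrm{c}}$ all weights are reset to one, so $\sum_i w_{n+1}^i = N_1$. With the denominator predictable, the conditional expectation passes through the ratio, and in each branch a direct computation of $\cesp{\sum_i w_{n+1}^i\func[n+1](x_{n+1}^i)}{\Fa_n}$ — using $x_{n+1}^i \sim \trans{n+1}(x_n^i,\cdot)$ on $\Theta_{n,\alpha}$, and the resampling mixture with weights proportional to $w_n^j\pot[n](x_n^j)$ on $\Theta_{n,\alpha}^{\mathrm{c}}$ — produces $\sum_i w_n^i\pot[n](x_n^i)\,\trans{n+1}\func[n+1](x_n^i)$ over the relevant normalization, which is exactly the right-hand side above. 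Because the event $\{\epart{n}\in\Theta_{n,\alpha}\}$ is $\Fa_n$-measurable, the two branches combine into the single relation.

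Granting the one-step identity, the induction closes quickly. Taking expectations and invoking the induction hypothesis gives
\[
\esp{\Punkac_{n+1}(\func[n+1])} = \esp{\Punkac_n(\semi[n+1]\func[n+1])} = \unkac_n(\pot[n]\,\trans{n+1}\func[n+1]) = \unkac_{n+1}(\func[n+1]) \eqsp,
\]
the last step being the defining recursion of $\unkac_n$ in \eqref{eq:defGamma}. The base case $n=0$ is immediate: the initial weights all equal one and the $\epart[i]{0}$ are i.i.d.\ from $\kac[0]$, so $\esp{\Punkac_0(\func[0])} = \esp{\Pkac_0(\func[0])} = \kac[0](\func[0]) = \unkac_0(\func[0])$. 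This establishes the first identity $\Nunkac[n](\Nfunc[n]) = \unkac_n(\func[n])$. The second follows by specialization: taking $\func[n] \equiv 1$ yields $\Nfunc[n] \equiv 1$ and hence $\Nunkac[n](1) = \unkac_n(1)$, so that
\[
\Nkac[n](\Nfunc[n]) = \frac{\Nunkac[n](\Nfunc[n])}{\Nunkac[n](1)} = \frac{\unkac_n(\func[n])}{\unkac_n(1)} = \kac_n(\func[n]) \eqsp,
\]
which completes the proof.
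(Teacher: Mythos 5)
Your proposal is correct and follows essentially the same route as the paper's proof: both reduce the claim to unbiasedness of $\Punkac_n$, establish the one-step identity $\cesp{\Pkac_{p}(\func[p])}{\mathcal{F}^{N_1}_{p-1}} = \Pkac_{p-1}(\semi[p]\func[p])/\Pkac_{p-1}(\pot[p-1])$ by splitting on the event $\Theta_{p-1,\alpha}$ versus its complement (your observation that the weight normalization is predictable in both branches is exactly what the paper's explicit two-branch computation exploits), and then close by iterating the tower property, which is your forward induction phrased backward. The only cosmetic difference is that you spell out the base case and the passage from the unnormalized to the normalized identity via $\func[n]\equiv 1$, which the paper leaves implicit by referring back to \autoref{theo:unbiased}.
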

\begin{proof}
See \autoref{subsec:ProofUnbiasedESS}.
\end{proof}

For each $n \in \NN$, let $(\Nepart[1]{n},\dots,\Nepart[N_2]{n}) \in \NXset{n}^{N_2}$ be a population of $N_2$ islands each of $N_1$ individuals. We associate to each island, a weight $\Omega_n^i$, for $i \in \{1, \dots, N_2 \}$. We can also make the islands interact using an ESS criterion.

The process $((\Nepart[1]{n},\Omega_n^1), \dots, (\Nepart[N_2]{n},\Omega_n^{N_2}))$ is a Markov chain which evolves according to selection and mutation steps, defined as follows
\begin{itemize}
\item \textit{Selection step}: if the ESS criterion $\left(\sum_{i=1}^{N_2}\Omega_n^i\Npot[n](\Nepart[i]{n})\right)^2/\sum_{i=1}^{N_2}\left(\Omega_n^i\Npot[n](\Nepart[i]{n})\right)^2$ is larger than $\beta N_2$ for one $\beta \in (0,1)$, we do not resample the islands and we update the weights thanks to the potential function $\Omega_{n+1}^i = \Omega_n^i \Npot[n](\Nepart[i]{n})$; otherwise, we resample the islands with probability proportional to $\{\Omega_n^i\Npot[n](\Nepart[i]{n})\}_{i=1}^{N_2}$ and the weights are all reset to 1.
\item \textit{Mutation step:} each selected island is updated independently according to the Markov transition $\Ntrans{n+1}$.
\end{itemize}
These islands particles allow to define the $N_2$-particle approximation of the measures $\Nkac[n]$ and $\Nunkac[n]$, for any $\Nfunc[n] \in \Nbanach{n}$, as
\begin{align*}
&\NPkac_n(\Nfunc[n])\eqdef \frac{1}{\sum_{i=1}^{N_2}\Omega_n^i}\sum_{i=1}^{N_2} \Omega_n^i \Nfunc[n](\Nepart[i]{n}) \eqsp, \\
&\NPunkac_n(\Nfunc[n])\eqdef \NPkac_n(\Nfunc[n])~\prod_{0\leq p<n}\NPkac_p(\Npot[p]) = \NPkac_n(\Nfunc[n])~\NPunkac_{n}(1) \eqsp.
\end{align*}
\autoref{alg:ESSWithinESS} describes the ESS within ESS island filter.
\begin{algorithm}[!h]
    \begin{algorithmic}[1]
        \caption{\quad ESS within ESS island filter}\label{alg:ESSWithinESS}
        \State \underline{Initialization:}
        \State Set $\boldsymbol{\Omega}_0 = \left( \Omega_0^i \right)_{i=1}^{N_2} = (1, \dots, 1)$.
        \For{$i$ from $1$ to $N_2$}
            \State Set $\Neweight[i]{0} = \left(\Nsmallweight{i,j}{0}\right)_{j=1}^{N_1} = (1,\dots,1)$.
            \State Sample $\Nepart[i]{0}=\left(\Nsmallpart{i,j}{0}\right)_{j=1}^{N_1}$ independently distributed according to $\kac_0$.
        \EndFor
        \For{$p$ from $0$ to $n-1$}
            \State \underline{Island selection step and weight updating:}
            \State Set $N_2^{\mathrm{eff}} = \left(\sum_{i=1}^{N_2}\Omega_p^i\Npot[p](\Nepart[i]{p})\right)^2
            /\sum_{i=1}^{N_2}\left(\Omega_p^i\Npot[p](\Nepart[i]{p})\right)^2$.
            \If{$N_2^{\mathrm{eff}} \geq \alpha_\mathrm{Islands} N_2$}
                \State For $1 \leq i \leq N_2$, set $\Omega_{p+1}^i = \Omega_p^i \Npot[p](\Nepart[i]{p})$.
                \State Set $\boldsymbol{I}_p = (I_p^i)_{i=1}^{N_2} = (1, 2, \dots, N_2)$.
            \Else
                \State Set $\boldsymbol{\Omega}_{p+1} = \left( \Omega_{p+1}^i \right)_{i=1}^{N_2} = (1, \dots, 1)$.
                \State Sample $\boldsymbol{I}_p = (I_p^i)_{i=1}^{N_2}$ multinomially
            with probability proportional to $\left(\Omega_p^i\Npot[p](\Nepart[i]{p},\Neweight[i]{p})\right)_{i=1}^{N_2}$.
            \EndIf
            \State \underline{Island mutation step:}
            \For{$i$ from $1$ to $N_2$}
                \State \underline{Particle selection and weight updating within each island:}
                \State Set $N_1^{\mathrm{eff}} = \left(\sum_{j=1}^{N_1}\omega_p^{I_p^i,j}\pot[p](\Nsmallpart{I_p^i,j}{p})\right)^2/\sum_{j=1}^{N_1}\left(\omega_p^{I_p^i,j}\pot[p](\Nsmallpart{I_p^i,j}{p})\right)^2$.
                \If{$N_1^{\mathrm{eff}} \geq \alpha_\mathrm{Particles} N_1$}
                    \State For $1 \leq j \leq N_1$, set $\omega_{p+1}^{i,j} = \omega_p^{I_p^i,j} \pot[p](\Nsmallpart{I_p^i,j}{p})$.
                    \State Set $\boldsymbol{J}_p^i = (J_p^{i,j})_{j=1}^{N_1} = (1, 2, \dots, N_1)$.
                \Else
                    \State Set $\boldsymbol{\omega}_{p+1}^i = \left( \omega_{p+1}^{i,j} \right)_{j=1}^{N_1} = (1, \dots, 1)$.
                    \State Sample $\boldsymbol{J}_p^i = (J_p^{i,j})_{j=1}^{N_1}$ multinomially
            with probability proportional to $\left(\omega_p^{I_p^i,j}\pot[p](\Nsmallpart{I_p^i,j}{p})\right)_{j=1}^{N_1}$.
                \EndIf
                \State \underline{Particle mutation:}
                \State For $1 \leq j \leq N_1$, sample independently $\Nsmallpart{i,j}{p+1}$ according to $\trans{p+1}(\Nsmallpart{I_p^i,L_p^{i,j}}{p},\cdot)$,
                where $L_p^{i,j}=J_p^{I_p^i,j}$.
            \EndFor
        \EndFor
        \State Approximate $\kac[n](\func[n])$ by $\displaystyle \dfrac{1}{\sum_{i=1}^{N_2} \Omega_n^i} \sum_{i=1}^{N_2} \frac{\Omega_n^i}{\sum_{j=1}^{N_1} \omega_n^{i,j}} \sum_{j=1}^{N_1} \omega_n^{i,j} \func[n]\left(\Nsmallpart{i,j}{n}\right)$.
    \end{algorithmic}
\end{algorithm}

\section{Numerical simulations}
\label{sec:NumericalSimulations}
\begin{example}[Linear Gaussian Model] \label{ex:LGMdef}
In order to assess numerically the previous results, we now consider the Linear Gaussian Model (LGM) defined by:
\begin{equation*}
  X_{p+1} = \phi X_p + \sigma_uU_p\eqsp, \quad Y_p = X_p +  \sigma_vV_p\eqsp,
\end{equation*}
where $X_0\sim\mathcal{N}\left(0,\sigma_u^2/(1-\phi^2)\right)$,
$\left\{U_p\right\}_{p\geq 1}$ and $\left\{V_p\right\}_{p\geq 1}$ are independent sequences of i.i.d.
standard Gaussian random variables, independent of $X_0$.
In the simulations, we have used $n=20$ observations, generated using the model with $\phi = 0.9$, $\sigma_u= 0.6$ and $\sigma_v = 1$.
We focus on the prediction problem, consisting in computing the predictive distribution of the state $X_n$ given  $Y_0, \cdots, Y_{n-1}$. This problem can be cast in the Feynman-Kac framework by setting for all $p \geq 0$
\begin{align*}
&\trans{p+1}(x_p,\rmd x_{p+1}) = \dfrac{1}{\sqrt{2\pi}\sigma_u}\exp\left[-\dfrac{(x_{p+1}-\phi x_p)^2}{2\sigma_u^2}\right]\rmd x_{p+1} \eqsp, \\
&\pot[p](x_p) = \dfrac{1}{\sqrt{2\pi}\sigma_v} \exp\left[-\dfrac{(y_p- x_p)^2}{2\sigma_v^2}\right]\eqsp.
\end{align*}
We estimate the predictive mean of the latent state $\cesp{X_n}{Y_{0},\dots,Y_{n-1}}$. We compare the results obtained for different interactions across the islands and for different values of $N_1$ and $N_2$; in all the simulations, the bootstrap filter is used within the islands.
We have run the simulations independently $250$ times
and we have compared these estimators with the value
computed using the Kalman filter.
Figure \ref{fig:LGMBootstrapWithinAllInteractions} displays the boxplots of the $250$ values
of these estimators.
\begin{figure}[!h]
  \centering
  \vskip-7cm
  \includegraphics[height=22cm,width=\textwidth]{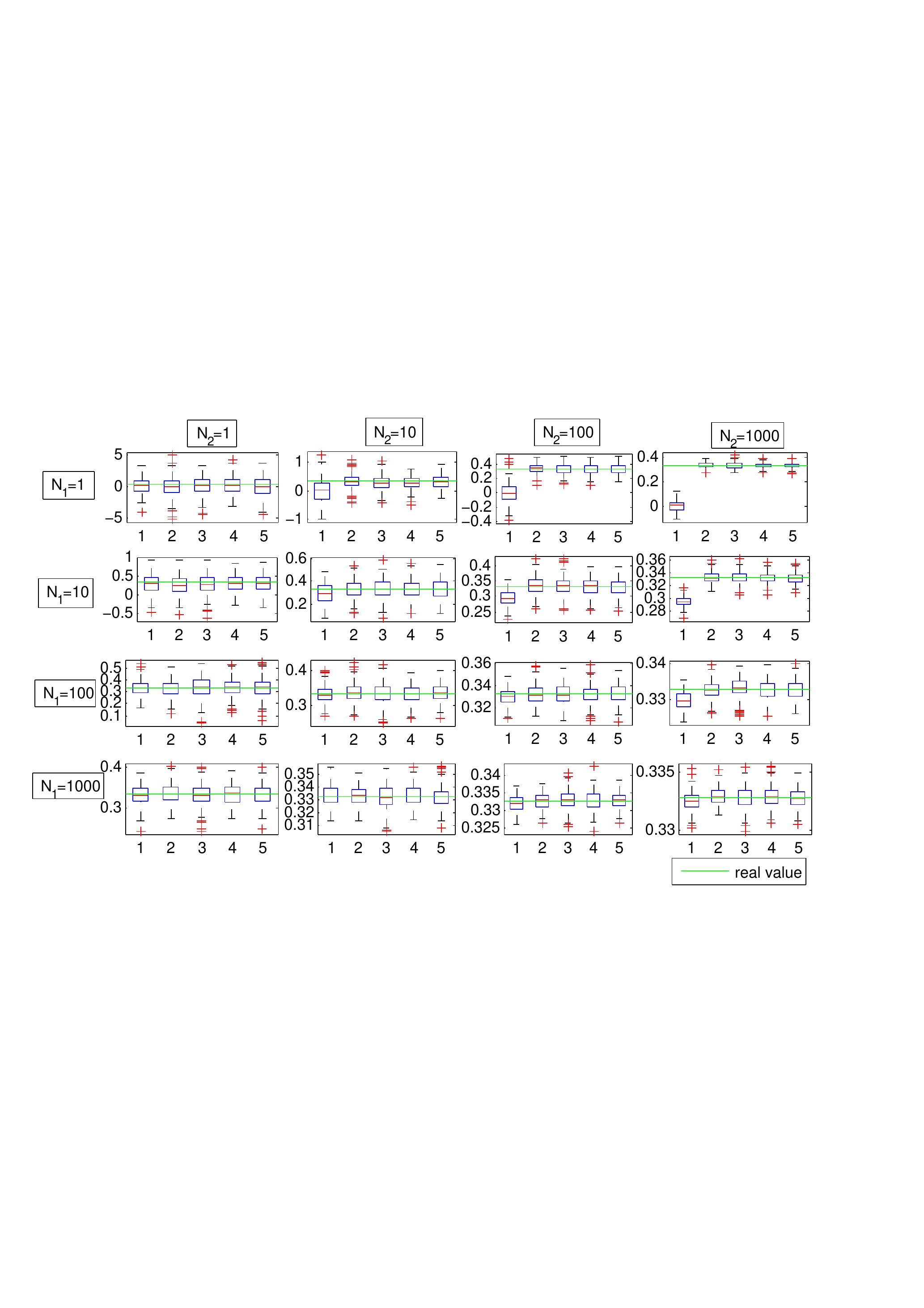}
  \vskip-7cm
  \caption{Comparison of different interactions across the islands with bootstrap within each island for the LGM
  (1) Bootstrap/independent; (2) Bootstrap/ESS; (3) Bootstrap/Bootstrap; (4) Bootstrap/($1/\left\|\pot[n]\right\|_{\infty}$))-bootstrap; (5) Bootstrap/$\mathrm{essup}_{\kac_p^{N_1}}(\pot[n])$-bootstrap}
  \label{fig:LGMBootstrapWithinAllInteractions}
\end{figure}
 As expected, for small values of $N_1$ compared to $N_2$, the bias of independent islands is large compared to cases where islands interact; on the contrary, the variance is smaller for independent islands than for bootstrap island interaction.
 In this example, the type of interaction between islands does not have a significant impact on the dispersion of the estimator (the bias is negligible).

 An important aspect for the efficiency of the algorithms is the number of interactions between islands. The smaller this number is, the quicker the algorithm will be. The number of interactions in the bootstrap case is $nN_2$. We have compared the island interaction number for the $\Peps{p}$-bootstrap and the ESS interactions w.r.t. the bootstrap one, when we apply the bootstrap filter within the islands. We have computed the empirical number of interactions over the $250$ simulations; the results are respectively given in tables \ref{tab:LGMinteractionNumberEpsilonVSBootstrap} and \ref{tab:LGMinteractionNumberESSVSBootstrap}.
\begin{table}
\begin{center}
\begin{tabular}{|c|cc|cc|cc|cc|cc|}
\hline\hline
 \backslashbox{$N_1$}{$N_2$}   & \multicolumn{2}{c|}{ \phantom{.}1\phantom{.}} & \multicolumn{2}{c|}{ \phantom{.}10\phantom{.}}   &    \multicolumn{2}{c|}{\phantom{.}100\phantom{.}} & \multicolumn{2}{c|}{\phantom{.}1000\phantom{.}} \\
\hline
1        &   0   &  20    & 77   & 200   &  825    & 2000  &  8264  & 20000  \\
\hline
10       &   0   &  20    & 47   & 200   &  636    & 2000  &  7122  & 20000  \\
\hline
100      &   0   &  20    & 19   & 200   &  297    & 2000  &  3609  & 20000  \\
\hline
1000     &   0   &  20    &  7   & 200   &  107    & 2000  &  1373  & 20000  \\
\hline\hline
\end{tabular}
\caption{Island interaction number using bootstrap within $\Peps{p}$-bootstrap and double bootstrap for the LGM.}
\label{tab:LGMinteractionNumberEpsilonVSBootstrap}
\end{center}
\end{table}
\begin{table}
\begin{center}
\begin{tabular}{|c|cc|cc|cc|cc|cc|}
\hline\hline
 \backslashbox{$N_1$}{$N_2$}   & \multicolumn{2}{c|}{ \phantom{.}1\phantom{.}} & \multicolumn{2}{c|}{ \phantom{.}10\phantom{.}}   &    \multicolumn{2}{c|}{\phantom{.}100\phantom{.}} & \multicolumn{2}{c|}{\phantom{.}1000\phantom{.}} \\
\hline
1        &   0   &  20    & 86   & 200   &  945    & 2000  &  9056  & 20000  \\
\hline
10       &   0   &  20    & 19   & 200   &  230    & 2000  &  2408  & 20000  \\
\hline
100      &   0   &  20    &  0   & 200   &    0    & 2000  &     0  & 20000  \\
\hline
1000     &   0   &  20    &  0   & 200   &    0    & 2000  &     0  & 20000  \\
\hline\hline
\end{tabular}
\caption{Island interaction number using bootstrap within ESS and double bootstrap for the LGM.}
\label{tab:LGMinteractionNumberESSVSBootstrap}
\end{center}
\end{table}
For a given number of islands, the island interaction number for the ESS and the $\Peps{p}$-bootstrap decrease when the island size grows, whereas it is constant for the bootstrap. The island interaction number is always much smaller using the ESS or the $\Peps{p}$-bootstrap than the bootstrap, across the islands. Moreover, as soon as the number of particles in each island is large enough, the ESS is no longer resampling the islands.

\autoref{theo:localAsymptoEpsilon} assures that the variance is smaller using the $\Peps{p}$-bootstrap than the bootstrap interaction. The variance gain using $\Peps{p}$-bootstrap or ESS instead of bootstrap across the islands is given in table \ref{tab:LGMgainvarianceESSEpsVSBootstrap}. The bootstrap interaction is applied within the islands. The variance is significantly reduced using the $\Peps{p}$-bootstrap or the ESS interaction across the islands, instead of the bootstrap, up to $34$ percent variance reduction.
\begin{table}
\begin{center}
\begin{tabular}{|c|cc|cc|cc|cc|}
\hline\hline
 \backslashbox{$N_1$}{$N_2$}   & \multicolumn{2}{c|}{ \phantom{.}10\phantom{.}}   &    \multicolumn{2}{c|}{\phantom{.}100\phantom{.}} & \multicolumn{2}{c|}{\phantom{.}1000\phantom{.}} \\
\hline
10       &   9.5    & 18.7    & 13.2  & 20.5   & 22.8    & 1.7      \\
\hline
100      &   25.4   & 26.1    & 26.1  & 18.5   & 13.5   & 22.4    \\
\hline
1000     &   28.2   & 34.3    & 19.5  & 33.8   & 25.9   & 26.5     \\
\hline\hline
\end{tabular}
\caption{Percentage of the variance gain using bootstrap within $\Peps{p}$-bootstrap on the left side and ESS within bootstrap on the right side, compared to the double bootstrap, in the LGM example.}
\label{tab:LGMgainvarianceESSEpsVSBootstrap}
\end{center}
\end{table}
\end{example}
\begin{example}[Stochastic volatility model]
\label{ex:reducEpsilonEN}
We consider the stochastic volatility model:
\begin{equation*}
X_{p+1} = \alpha X_p + \sigma U_{p+1}\eqsp, \quad
Y_p = \beta \mathrm{e}^{\frac{X_p}{2}} V_p\eqsp,
\end{equation*}
where $X_0\sim\mathcal{N}\left(0,\sigma^2/(1-\alpha^2)\right)$, $\{U_p\}_{p \geq 0}$ and $\{V_p\}_{p \geq 0}$ are independent sequences of standard Gaussian random variables independent of $X_0$. In the simulations, we have used $n=100$ observations, generated using the model with $\alpha = 0.98$, $\sigma= 0.5$ and $\beta = 1$.
We estimate the predictive mean of the latent state $X_n$ given the observations $Y_0, \cdots, Y_{n-1}$. This problem can be cast in the Feynman-Kac framework by setting for all $p \geq 0$
\begin{align*}
&\trans{p+1}(x_p,\rmd x_{p+1}) = \dfrac{1}{\sqrt{2\pi}\sigma} \exp\left[-\dfrac{(x_{p+1}-\alpha x_p)^2}{2\sigma^2}\right] \rmd x_{p+1} \eqsp, \\
&\pot[p](x_p) = \dfrac{1}{\sqrt{2\pi}\beta}\exp\left[-\dfrac{x_p/2 -y_p^2\mathrm{e}^{-x_p}}{2\beta^2}\right]\eqsp.
\end{align*}
We have computed this quantity using a single run of bootstrap filter with $10^6$ particles.
In the following results, we always consider bootstrap interaction within each island, and we compare different interactions across the islands, for several values of $N_1$ and $N_2$.
We have run the simulations independently $250$ times.
Figure \ref{fig:StoVolplainWithinAllInteractions} displays the boxplots of the $250$ values
of these estimators. The behavior of the different methods is similar to the one observed for the Linear Gaussian Model example.
\begin{figure}[!h]
  \centering
  \vskip-7cm
  \includegraphics[height=22cm,width=\textwidth]{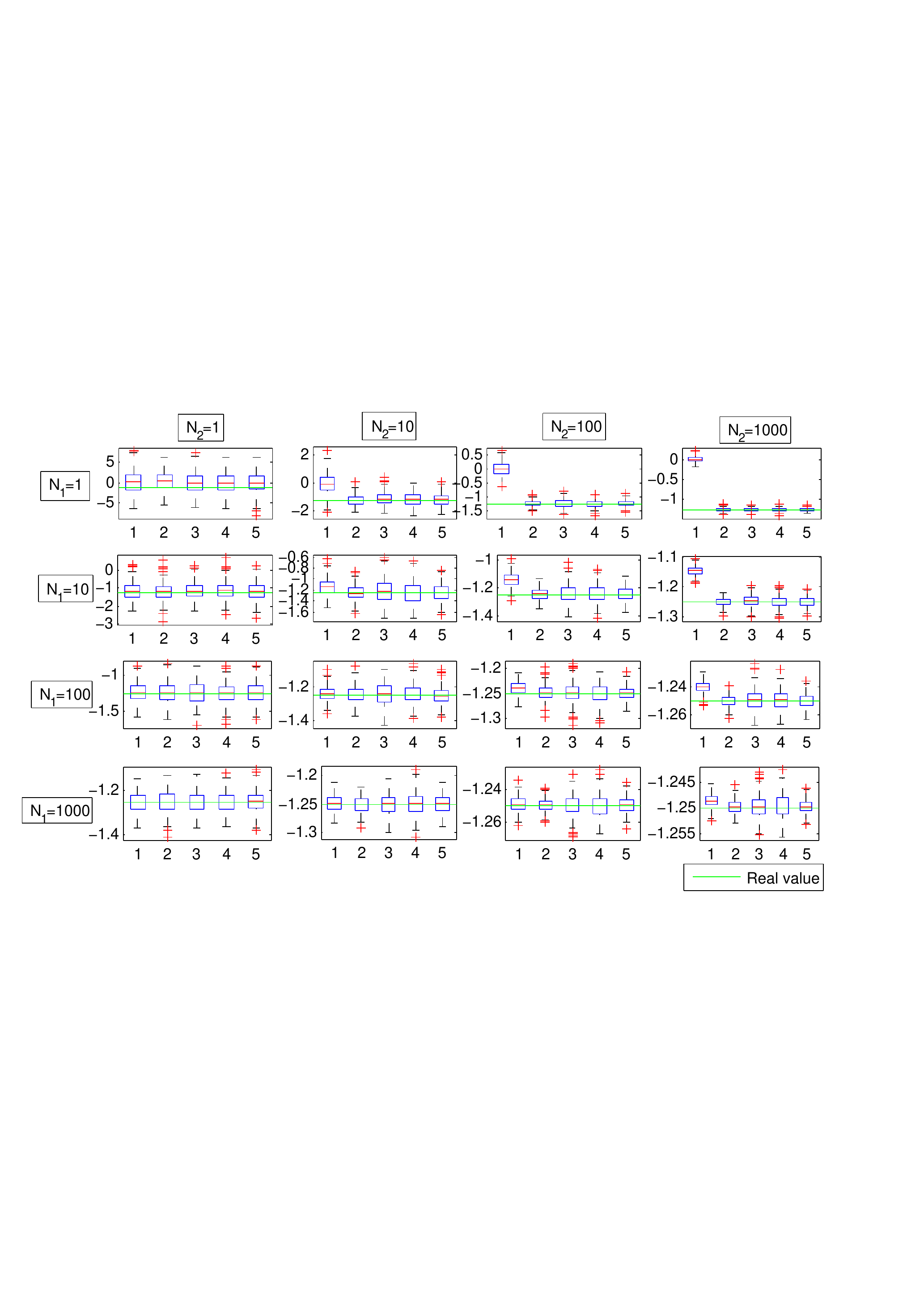}
  \vskip-7cm
  \caption{Comparison of different interactions across the islands with bootstrap within each island for the Stochastic volatility model
  (1) Bootstrap/independent; (2) Bootstrap/ESS; (3) Bootstrap/Bootstrap; (4) Bootstrap/($1/\left\|\pot[n]\right\|_{\infty}$))-bootstrap; (5) Bootstrap/$\mathrm{essup}_{\kac_p^{N_1}}(\pot[n])$-bootstrap}
  \label{fig:StoVolplainWithinAllInteractions}
\end{figure}

We have compared the island interaction number for the $\Peps{p}$-bootstrap and the ESS interactions w.r.t. the bootstrap one, when we apply the bootstrap filter within the islands. We have computed the empirical number of interactions over the $250$ simulations; the results are respectively given in tables \ref{tab:StoVolinteractionNumberEpsilonVSBootstrap} and  \ref{tab:StoVolinteractionNumberESSVSBootstrap}. The number of interactions in the bootstrap case is $nN_2$. The same phenomena are observed as for the Linear Gaussian Model example.
\begin{table}
\begin{center}
\begin{tabular}{|c|cc|cc|cc|cc|cc|}
\hline\hline
 \backslashbox{$N_1$}{$N_2$}   & \multicolumn{2}{c|}{ \phantom{.}1\phantom{.}} & \multicolumn{2}{c|}{ \phantom{.}10\phantom{.}}   &    \multicolumn{2}{c|}{\phantom{.}100\phantom{.}} & \multicolumn{2}{c|}{\phantom{.}1000\phantom{.}} \\
\hline
1        &    0   &  100    & 332   & 1000   &  4021    & 10000  &  42185   & 100000  \\
\hline
10       &    0   &  100    & 221   & 1000   &  3069    & 10000  &  34789   & 100000  \\
\hline
100      &    0   &  100    & 100   & 1000   &  1523    & 10000  &  18647   & 100000  \\
\hline
1000     &    0   &  100    &  36   & 1000   &   577    & 10000  &   7332   & 100000  \\
\hline\hline
\end{tabular}
\caption{Island interaction number using bootstrap within $\Peps{p}$-bootstrap and double bootstrap for the Stochastic volatility model.}
\label{tab:StoVolinteractionNumberEpsilonVSBootstrap}
\end{center}
\end{table}

\begin{table}
\begin{center}
\begin{tabular}{|c|cc|cc|cc|cc|cc|}
\hline\hline
 \backslashbox{$N_1$}{$N_2$}   & \multicolumn{2}{c|}{ \phantom{.}1\phantom{.}} & \multicolumn{2}{c|}{ \phantom{.}10\phantom{.}}   &    \multicolumn{2}{c|}{\phantom{.}100\phantom{.}} & \multicolumn{2}{c|}{\phantom{.}1000\phantom{.}} \\
\hline
1        &   0   &  100    &  301   & 1000   &  3514   & 10000  &  36108  & 100000  \\
\hline
10       &   0   &  100    &  109   & 1000   &  1229   & 10000  &  12096  & 100000  \\
\hline
100      &   0   &  100    &   15   & 1000   &   186   & 10000  &   1956  & 100000  \\
\hline
1000     &   0   &  100    &    0   & 1000   &    0    & 10000  &      0  & 100000  \\
\hline\hline
\end{tabular}
\caption{Island interaction number using bootstrap within ESS and double bootstrap for the Stochastic volatility example.}
\label{tab:StoVolinteractionNumberESSVSBootstrap}
\end{center}
\end{table}

The variance gain using the $\Peps{p}$-bootstrap or the ESS instead of the bootstrap across the islands is given in table \ref{tab:StoVolgainvarianceplainEpsVSBootstrap}. The bootstrap interaction is applied within the islands. The variance is significantly reduced using the $\Peps{p}$-bootstrap or the ESS interaction across the islands, instead of the bootstrap, up to $66$ percent variance reduction.
\begin{table}
\begin{center}
\begin{tabular}{|c|cc|cc|cc|cc|}
\hline\hline
 \backslashbox{$N_1$}{$N_2$}   & \multicolumn{2}{c|}{ \phantom{.}10\phantom{.}}   &    \multicolumn{2}{c|}{\phantom{.}100\phantom{.}} & \multicolumn{2}{c|}{\phantom{.}1000\phantom{.}} \\
\hline
10       &   44.2    & 57.8     & 35.3   & 57.2     & 30.4    & 50.7   \\
\hline
100      &   46.4    & 49.3     & 52.2   & 44.6     & 46.8    & 65    \\
\hline
1000     &   30.4    & 41.7     & 49.6   & 66.9     & 55.8    & 61.4  \\
\hline\hline
\end{tabular}
\caption{Percentage of the variance gain using bootstrap within $\Peps{p}$-bootstrap on the left side and ESS within bootstrap on the right side, compared to the double bootstrap, in the Stochastic volatility example.}
\label{tab:StoVolgainvarianceplainEpsVSBootstrap}
\end{center}
\end{table}
\end{example}

\section{Proofs}
\label{sec:proofs}
	\subsection{Proof of \autoref{theo:unbiased}}
	\label{subsec:ProofUnbiased}
Using \eqref{eq:approxDef_1} and \eqref{eq:defG}, $\Npot[n](\epart{n})$ may be expressed as $\Npot[n](\epart{n}) = \Pkac_n(\pot[n]) $ where $\epart{n}$ and $\Pkac_n$ are defined in \eqref{eq:approxDef_1} and \eqref{eq:defSingleParticles}, respectively.
Similarly, for any $\Nfunc[n] \in \Nbanach{n}$ of the form $\Nfunc[n](\bx_n) = N_1^{-1} \sum_{i=1}^{N_1} \func[n](x_n^i) $ where $\func[n] \in \banach{n}$, $\Nfunc[n](\epart{n})$ is given by
$\Nfunc[n](\epart{n})= \Pkac_n(f_n)$. Note that
\begin{align}
\label{eq:assumptionBoldM}
\Nunkac_n(\Nfunc[n]) \eqdef \esp{\Nfunc[n](\epart{n})~\prod_{0\leq p<n}\Npot[p](\epart{p})}
&= \esp{\Pkac_n(\func[n])~\prod_{0\leq p<n}\Pkac_p(\pot[p])},
\end{align}
and since by \eqref{eq:approxDef_2}, it suffices to prove that $\unkac_n^{N_1}(\func[n])$ is an unbiased estimator of $\unkac_{n}(\func[n])$, i.e.
\begin{equation}
\label{eq:unbiased}
\esp{\unkac_n^{N_1}(\func[n])}=\unkac_{n}(\func[n]).
\end{equation}
Define the filtration $\mathcal{F}^{N_1}_n \eqdef \sigma \left( \epart{p}, 0 \leq p \leq n \right) \eqsp.$
Note that
\begin{multline}
\label{eq:kac-expectation}
\cesp{\Pkac_p(\func[p])}{\mathcal{F}^{N_1}_{p-1}}=\frac{1}{N_1}\sum_{i=1}^{N_1}{\cesp{\func[p](\epart[i]{p})}{\mathcal{F}^{N_1}_{p-1}}}=\cesp{\func[p](\epart[1]{p})}{\mathcal{F}^{N_1}_{p-1}} \\
= \dfrac{\sum_{i=1}^{N_1}{\pot[p-1](\epart[i]{p-1}) \trans{p} \func[p] (\epart[i]{p-1})}}{\sum_{i=1}^{N_1}{\pot[p-1](\epart[i]{p-1})}} = \dfrac{\Pkac_{p-1}(Q_{p}\func[p])}{\Pkac_{p-1}(\pot[p-1])} \eqsp,
\end{multline}
where $Q_{p}$ is defined in \eqref{eq:defQ}.

By the definition of $\Punkac_n$ given in \eqref{eq:approxDef_2}, we have
\begin{align*}
\esp{\Punkac_n(\func[n])} &= \esp{\cesp{\Pkac_n(\func[n])}{\mathcal{F}^{N_1}_{n-1}}~\prod_{0\leq p<n}\Pkac_p(\pot[p])} \\
&= \esp{\dfrac{\Pkac_{n-1}(\semi_n \func[n])}{\Pkac_{n-1}(\pot[n-1])} ~\prod_{0\leq p<n}\Pkac_p(\pot[p])} \\
&= \esp{\Pkac_{n-1}(\semi_n \func[n])~\prod_{0\leq p<n-1}\Pkac_p(\pot[p])} \eqsp.
\end{align*}
By iterating this step we get
\begin{align*}
\esp{\Punkac_n(\func[n])} &= \esp{\Pkac_{0}(\semi_1 \cdots \semi_n \func[n])}
= \esp{\semi_1 \cdots \semi_n\func[n](\epart[1]{0})} \\
&= \unkac_0\semi_1 \cdots \semi_n\func[n]
= \unkac_n(\func[n]) \eqsp.
\end{align*}

	\subsection{Proof of \autoref{theo:singleIslandAsymptotics}}
	\label{subsec:ProofSingleIsland}
We preface the proof by the following Lemma.
\begin{lemma}\label{lem:LimWetaWgamma}
For any $\func[n]^1,\func[n]^2\in\banach{n}$, the pair $(W_n^{\unkac,N_1}(\func[n]^1),W_n^{\kac,N_1}(\func[n]^2))$ converges in law, as $N_1$ tends to infinity, to $(W_n^{\unkac}(\func[n]^1),W_n^{\kac}(\func[n]^2))$. In addition, for any polynomial function $\Phi:\RR^2 \rightarrow \RR$, we have:
$$
\lim_{N_1 \rightarrow \infty} \esp{\Phi\left(W_n^{\unkac,N_1}(\func[n]^1),W_n^{\kac,N_1}(\func[n]^2)\right)} = \esp{\Phi\left(W_n^{\unkac}(\func[n]^1),W_n^{\kac}(\func[n]^2)\right)} \eqsp.
$$
\end{lemma}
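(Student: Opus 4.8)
The plan is to regard both $W_n^{\unkac,N_1}(\func[n]^1)$ and $W_n^{\kac,N_1}(\func[n]^2)$ as explicit continuous functionals of one and the same underlying object — the collection of local error fields $(W_p^{N_1})_{0\le p\le n}$ together with the normalizing scalars $(\Punkac[p](1))_{0\le p\le n}$ — and to split the claim into (i) joint convergence in law and (ii) convergence of all polynomial moments, the latter obtained from uniform integrability. For the functional representation, \eqref{eq:defWgamma} gives $W_n^{\unkac,N_1}(\func[n]^1)=\sum_{p=0}^n \Punkac[p](1)\,W_p^{N_1}(\semi_{p,n}\func[n]^1)$, a linear form in the $W_p^{N_1}$ with random coefficients, while \eqref{eq:defWeta} gives $W_n^{\kac,N_1}(\func[n]^2)=\Punkac[n](1)^{-1}\sum_{p=0}^n \Punkac[p](1)\,W_p^{N_1}(\semi_{p,n}(\func[n]^2-\kac_n(\func[n]^2)))$. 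Since $\Punkac[p](1)=\prod_{q<p}\Pkac[q](\pot[q])$ is measurable with respect to the empirical measures up to time $p-1$, consistency of the particle system yields $\Punkac[p](1)\to\unkac_p(1)$ in probability, with $\unkac_n(1)>0$.

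For convergence in law I would combine the hypothesis with Slutsky's lemma. By assumption $(W_p^{N_1})$ converges in law to the independent centered Gaussian fields $(W_p)$; jointly with the convergence in probability of $(\Punkac[p](1))_p$ to the deterministic vector $(\unkac_p(1))_p$, this gives joint convergence of the pair $((W_p^{N_1})_p,(\Punkac[p](1))_p)$. The map sending this pair to $(W_n^{\unkac,N_1}(\func[n]^1),W_n^{\kac,N_1}(\func[n]^2))$ through the two displayed formulas is continuous at every point whose last coordinate is nonzero, and the limiting last coordinate equals $\unkac_n(1)>0$ almost surely; the continuous mapping theorem therefore produces exactly $(W_n^{\unkac}(\func[n]^1),W_n^{\kac}(\func[n]^2))$, upon using $\semiNorm_{p,n}=(\unkac_p(1)/\unkac_n(1))\semi_{p,n}$ from \eqref{eq:definition-potential-kernel} to match \eqref{eq:defWGammaLim} and \eqref{eq:defWetaLim}. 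This is the joint reading of \autoref{theo:localAsympto-1}.

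The moment statement is the substantive part, since convergence in law does not by itself transfer to polynomial moments. Here I would establish uniform-in-$N_1$ bounds $\sup_{N_1}\esp{|W_n^{\unkac,N_1}(\func[n]^1)|^k}<\infty$ and $\sup_{N_1}\esp{|W_n^{\kac,N_1}(\func[n]^2)|^k}<\infty$ for every $k$; together with convergence in law these make every monomial in the pair uniformly integrable (raise to a slightly larger power, apply H\"older), whence $\esp{\Phi(\cdot,\cdot)}$ converges for each polynomial $\Phi$. The bound for the local errors is classical: conditionally on $\mathcal{F}^{N_1}_{p-1}$, $W_p^{N_1}(\phi)$ is a normalized sum of i.i.d.\ centered bounded variables, so Marcinkiewicz--Zygmund/Burkholder inequalities bound every conditional, hence unconditional, moment by a constant depending only on $k$ and $\|\phi\|_\infty$.

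The main obstacle is the random normalization $\Punkac[n](1)$ in the denominator of $W_n^{\kac,N_1}$: bounding its arbitrary moments requires uniform control of the negative moments of $\Punkac[n](1)$, i.e.\ that $\Punkac[n](1)$ does not concentrate near $0$ uniformly in $N_1$. Rather than analysing $\Punkac[n](1)^{-1}$ by hand, I would invoke the standard $L^p$-mean error estimates for Feynman--Kac particle models \cite[Ch.~7]{delmoral:2004}, which under boundedness of the potentials and $\unkac_n(1)>0$ give directly $\sup_{N_1}N_1^{k}\,\esp{(\Pkac_n(\func[n])-\kac_n(\func[n]))^{2k}}<\infty$ — precisely the required uniform $L^{2k}$ bound on $W_n^{\kac,N_1}(\func[n])$, with the reciprocal already absorbed. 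This is what lets me close the argument.
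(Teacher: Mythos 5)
Your proposal is correct and is essentially the paper's own argument: the paper proves the joint convergence by applying Slutsky's lemma to the linear combinations $\alpha W_n^{\unkac,N_1}(\func[n]^1)+\beta W_n^{\kac,N_1}(\func[n]^2)$, expanded exactly through your functional representation in terms of $(\Punkac_p(1))_p$ and $(W_p^{N_1}(\semi_{p,n}\cdot))_p$ (a Cram\'er--Wold phrasing of your continuous-mapping step, an immaterial difference). The moment part is also identical in substance: the paper deduces it from the uniform estimates $\sup_{N_1\geq 1}\esp{|W_n^{\unkac,N_1}(\func[n]^1)|^p}^{1/p}\leq c_p(n)\|\func[n]^1\|$ and $\sup_{N_1\geq 1}\esp{|W_n^{\kac,N_1}(\func[n]^2)|^p}^{1/p}\leq c_p(n)\|\func[n]^2\|$ via \cite[Theorem 7.4.4]{delmoral:2004} --- the same Khintchine-type $L^p$ bounds you invoke, which, as you note, already absorb the random normalization $\Punkac_n(1)$.
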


\begin{proof}
For any $(\alpha,\beta)\in\RR^2$ by the definitions \eqref{eq:defWgamma} of $W_n^{\unkac,N_1}$ and \eqref{eq:defWeta} of $W_n^{\kac,N_1}$ we have
\begin{multline*}
\alpha W_n^{\unkac,N_1}(\func[n]^1) + \beta W_n^{\kac,N_1}(\func[n]^2) \\
= \sum_{p=0}^n \left[ \alpha \Punkac_p(1) W_p^{N_1}(\semi_{p,n}\func[n]^1) + \beta \frac{\Punkac_p(1)}{\Punkac_n(1)}W_p^{N_1}(\semi_{p,n}(\func[n]^2-\kac[n](\func[n]^2)))\right] \eqsp.
\end{multline*}
As in the proof of Theorem \ref{theo:localAsympto}, a simple application of Slutsky's Lemma allows to show that $\alpha W_n^{\unkac,N_1}(\func[n]^1) + \beta W_n^{\kac,N_1}(\func[n]^2)$ converges in law to $\alpha W_n^{\unkac}(\func[n]^1) + \beta W_n^{\kac}(\func[n]^2)$. The proof follows from \cite[Theorem 7.4.4]{delmoral:2004}, using that for any $p \geq 1$,
\begin{align}
\label{eq:LpMeanErrorGamma}
&\sup_{N_1\geq 1} \esp{\left|W_n^{\unkac,N_1}(\func[n]^1)\right|^p}^{1/p} \leq c_p(n) ||\func[n]^1|| \eqsp, \\
\label{eq:LpMeanErrorEta}
&\sup_{N_1\geq 1} \esp{\left|W_n^{\kac,N_1}(\func[n]^2)\right|^p}^{1/p} \leq c_p(n) ||\func[n]^2|| \eqsp,
\end{align}
for some finite constant $c_p(n)$ depending only on $p$ and $n$.
\end{proof}

\begin{proof}of \autoref{theo:singleIslandAsymptotics}
Consider first the bias term. We decompose the error as follows using \eqref{eq:defWeta}:
\begin{align*}
&N_1 \left[\Pkac_n(\func[n]) - \kac_n(\func[n]) \right] = \sqrt{N_1} W_n^{\eta,N_1}(\func[n])
= \sqrt{N_1} \dfrac{\unkac_n(1)}{\Punkac_n(1)}W_n^{\gamma,N_1}\left( \dfrac{\func[n] - \kac_n(\func[n])}{\unkac_n(1)}\right) \\
& = \sqrt{N_1} \left[\dfrac{\unkac_n(1)}{\Punkac_n(1)}-1\right]  W_n^{\gamma,N_1}\left( \dfrac{\func[n] - \kac_n(\func[n])}{\unkac_n(1)}\right) + \sqrt{N_1} W_n^{\gamma,N_1}\left( \dfrac{\func[n] -\kac_n(\func[n])}{\unkac_n(1)}\right)\eqsp.
\end{align*}
Since $W_n^{\gamma,N_1} =\sqrt{N_1} \left[\Punkac_n-\unkac_n \right] $, \autoref{theo:unbiased} shows that, the expectation of the second term on the RHS of the previous equation, is zero.
By noting that
\begin{equation*}
\left[\dfrac{\unkac_n(1)}{\Punkac_n(1)}-1\right] =
 -\dfrac{1}{\Punkac_n(1)} [\Punkac_n-\unkac_n](1) = -\frac{1}{\sqrt{N_1}}\dfrac{1}{\Punkac_n(1)} W_n^{\gamma,N_1}(1) \eqsp,
\end{equation*}
where $W_n^{\unkac,N_1}$ is defined in \eqref{eq:defWgamma}, we get
\begin{align*}
N_1 \esp{\Pkac_n(\func[n])-\kac_n(\func[n])}
&= -\esp{\dfrac{1}{\Punkac_n(1)} W_n^{\unkac,N_1}(1) W_n^{\unkac,N_1}\left(\dfrac{\func[n]-\kac_n(\func[n])}{\unkac_n(1)}\right)} \\
&= -\dfrac{1}{\unkac_n(1)}\esp{ W_n^{\unkac,N_1}(1) W_n^{\kac,N_1}(\func[n])}\eqsp,
\end{align*}
where $W_n^{\kac,N_1}$ is given in \eqref{eq:defWeta}. According to \autoref{lem:LimWetaWgamma}:
\begin{equation}\label{eq:biaisproof}
\lim_{N_1 \rightarrow \infty} N_1 \esp{\Pkac_n(\func[n]) - \kac_n(\func[n])} = -\dfrac{1}{\unkac_n(1)}\esp{ W_n^{\unkac}(1) W_n^{\kac}(\func[n])} = B_n(\func[n])\eqsp,
\end{equation}
by the definitions of $W_n^{\unkac}$ and $W_n^{\kac}$. Consider now the variance. We use the decomposition
\begin{equation*}
\var{\Pkac_n(\func[n])}  = \esp{ \left( \Pkac_n(\func[n]) - \kac_n(\func[n]) \right)^2} - \left\{\esp{ \Pkac_n(\func[n]) - \kac_n(\func[n]) }\right\}^2\eqsp.
\end{equation*}
Using \eqref{eq:biaisproof}, we get $\left\{\esp{ \Pkac_n(\func[n]) - \kac_n(\func[n]) }\right\}^2= O(N_1^{-2})$.
From the definition~\eqref{eq:defWeta} of $W_n^{\kac,N_1}$, it follows $\esp{ \left( \Pkac_n(\func[n]) - \kac_n(\func[n]) \right)^2} = N_1^{-1} \esp{W_n^{\kac,N_1}(\func[n])^2}$, implying that
$\lim_{N_1 \rightarrow \infty} N_1 \var{\Pkac_n(\func[n])} = \esp{W_n^{\kac}(\func[n])^2} = V_n(\func[n])$, by the definition of $W_n^{\kac}$ and using again \autoref{lem:LimWetaWgamma}.
\end{proof}

	\subsection{Proof of \autoref{theo:InteractingAsymptotic}}
	\label{subsec:ProofInteractingIslands}
	We preface the proof of \autoref{theo:InteractingAsymptotic} by the following result on the usual Feynman-Kac model.

\begin{lemma}
\label{lem:espTwoFunc}
For any time horizon $n\geq 0$ and any functions $\func[n]^1, \func[n]^2 \in \banach{n}$ such that $\kac_n(\func[n]^1) = 0$, we have
\begin{multline*}
\lim_{N_1 \rightarrow \infty} N_1\esp{\Pkac_n(\func[n]^1) \Pkac_n(\func[n]^2) \prod_{p=0}^{n-1} \Pkac_p(\pot[p])} \\
= \unkac_n(1) \sum_{p=0}^n \esp{W_p(\semiNorm_{p,n}(\func[n]^1))W_p(\semiNorm_{p,n}(\func[n]^2-\kac_n(\func[n]^2)))}\eqsp.
\end{multline*}
\end{lemma}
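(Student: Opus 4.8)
The plan is to collapse the triple product into a single product of the normalized and unnormalized empirical measures and then feed it into the joint fluctuation result of \autoref{lem:LimWetaWgamma}. First I would use \eqref{eq:approxDef_2} to write $\prod_{p=0}^{n-1}\Pkac_p(\pot[p]) = \Punkac_n(1)$, so that $\Pkac_n(\func[n]^2)\prod_{p=0}^{n-1}\Pkac_p(\pot[p]) = \Punkac_n(\func[n]^2)$ and the quantity under study is exactly $N_1\,\esp{\Pkac_n(\func[n]^1)\,\Punkac_n(\func[n]^2)}$.

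Next I would introduce the fluctuation fields. Since $\kac_n(\func[n]^1)=0$ by hypothesis, definition \eqref{eq:defWeta} gives $\Pkac_n(\func[n]^1) = N_1^{-1/2}\,W_n^{\kac,N_1}(\func[n]^1)$, while \eqref{eq:defWgamma} gives $\Punkac_n(\func[n]^2) = \unkac_n(\func[n]^2) + N_1^{-1/2}\,W_n^{\unkac,N_1}(\func[n]^2)$. Multiplying these, scaling by $N_1$, and taking expectations yields the exact decomposition
\begin{multline*}
N_1\,\esp{\Pkac_n(\func[n]^1)\,\Punkac_n(\func[n]^2)} \\
= \sqrt{N_1}\,\unkac_n(\func[n]^2)\,\esp{W_n^{\kac,N_1}(\func[n]^1)} + \esp{W_n^{\kac,N_1}(\func[n]^1)\,W_n^{\unkac,N_1}(\func[n]^2)}\eqsp.
\end{multline*}

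For the first term I would note that $\sqrt{N_1}\,\esp{W_n^{\kac,N_1}(\func[n]^1)} = N_1\,\esp{\Pkac_n(\func[n]^1)-\kac_n(\func[n]^1)}$, which converges to $B_n(\func[n]^1)$ by \autoref{theo:singleIslandAsymptotics}; hence the first term tends to $\unkac_n(\func[n]^2)\,B_n(\func[n]^1)$. For the second term, \autoref{lem:LimWetaWgamma} applied with the polynomial $\Phi(x,y)=xy$ upgrades the joint weak convergence to convergence of the product moment, giving the limit $\esp{W_n^{\kac}(\func[n]^1)\,W_n^{\unkac}(\func[n]^2)}$. Inserting the explicit forms \eqref{eq:defWetaLim} and \eqref{eq:defWGammaLim} and using that the $(W_p)_{0\le p\le n}$ are independent and centered (so only the diagonal $p=q$ survives and, with $\kac_n(\func[n]^1)=0$, the centering of $\func[n]^1$ is automatic), this second limit equals $\unkac_n(1)\sum_{p=0}^n \esp{W_p(\semiNorm_{p,n}(\func[n]^1))\,W_p(\semiNorm_{p,n}(\func[n]^2))}$.

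Finally I would recombine the two limits. Writing $\unkac_n(\func[n]^2)=\unkac_n(1)\,\kac_n(\func[n]^2)$ and substituting \eqref{eq:defBn} for $B_n(\func[n]^1)$ (again diagonal because of independence), the bias contribution becomes $-\unkac_n(1)\,\kac_n(\func[n]^2)\sum_p \esp{W_p(\semiNorm_{p,n}(1))\,W_p(\semiNorm_{p,n}(\func[n]^1))}$. Adding it to the second limit and using the linearity of each $W_p$ and of $\semiNorm_{p,n}$, the inner combination $\semiNorm_{p,n}(\func[n]^2)-\kac_n(\func[n]^2)\,\semiNorm_{p,n}(1)$ collapses to $\semiNorm_{p,n}(\func[n]^2-\kac_n(\func[n]^2))$, delivering exactly the stated right-hand side. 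I expect the main obstacle to be not any single analytic estimate—those are carried by the $L^p$ moment bounds underlying \autoref{lem:LimWetaWgamma}—but the bookkeeping in this last recombination: the recentering of $\func[n]^2$ appears only after the a priori spurious-looking bias term $\unkac_n(\func[n]^2)\,B_n(\func[n]^1)$ is correctly merged into the centered-fluctuation covariance.
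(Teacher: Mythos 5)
Your proof is correct, but it takes a different route from the paper's. Both proofs begin identically, collapsing the product into $\Punkac_n$ via \eqref{eq:approxDef_2}; the difference is which function receives the unnormalized mass. The paper attaches it to the \emph{centered} function, writing $\Pkac_n(\func[n]^1)\prod_{p=0}^{n-1}\Pkac_p(\pot[p])=\Punkac_n(\func[n]^1)$, so that the unbiasedness property \eqref{eq:unbiased} gives $\esp{\Punkac_n(\func[n]^1)}=\unkac_n(1)\kac_n(\func[n]^1)=0$ exactly at finite $N_1$; this lets it subtract $\kac_n(\func[n]^2)$ and $\unkac_n(\func[n]^1)$ for free and rewrite the expectation as $N_1^{-1}\esp{W_n^{\unkac,N_1}(\func[n]^1)\,W_n^{\kac,N_1}(\func[n]^2)}$, after which a single application of \autoref{lem:LimWetaWgamma} (with $\func[n]^2$ centered inside $W_n^{\kac}$) finishes the proof with no surviving bias term and no recombination step. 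You instead attach the mass to $\func[n]^2$, which produces the nonvanishing cross term $\sqrt{N_1}\,\unkac_n(\func[n]^2)\,\esp{W_n^{\kac,N_1}(\func[n]^1)}$; evaluating it forces you to invoke \autoref{theo:singleIslandAsymptotics} (whose hypothesis holds here by \autoref{theo:localAsympto} for the bootstrap, and whose own proof rests on \autoref{lem:LimWetaWgamma}, so your argument uses that lemma twice, once indirectly), and then to perform the final algebraic merge $\semiNorm_{p,n}(\func[n]^2)-\kac_n(\func[n]^2)\,\semiNorm_{p,n}(1)=\semiNorm_{p,n}(\func[n]^2-\kac_n(\func[n]^2))$ using linearity and the independence of the $(W_p)_{0\leq p\leq n}$. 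Your recombination is carried out correctly, so the proof stands; what the paper's choice of grouping buys is that the bias never appears and the centering of $\func[n]^2$ is built in from the start, while your version makes explicit how the $O(N_1^{-1})$ bias of $\Pkac_n$ and the fluctuation covariance conspire to produce the centered kernel in the limit, which is an instructive but strictly longer path.
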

\begin{proof}
By the definition \eqref{eq:approxDef_2} of $\Punkac_n$ we have $\Pkac_n(\func[n]^1) \prod_{p=0}^{n-1} \Pkac_p(\pot[p]) = \Punkac_n(\func[n]^1)$, and, according to \eqref{eq:unbiased}, $\esp{\Punkac_n(\func[n]^1)} = \unkac_n(\func[n]^1) = \unkac_n(1) \kac_n(\func[n]^1)=0$, so that we get
\begin{align*}
&\esp{\Pkac_n(\func[n]^1) \Pkac_n(\func[n]^2) \prod_{p=0}^{n-1} \Pkac_p(\pot[p])} = \esp{\Punkac_n(\func[n]^1) \Pkac_n(\func[n]^2)} \\
 &= \esp{\Punkac_n(\func[n]^1) \left(\Pkac_n(\func[n]^2)-\kac_n(\func[n]^2)\right)} + \esp{\Punkac_n(\func[n]^1)}\kac_n(\func[n]^2)  \\
 &= \esp{\left(\Punkac_n(\func[n]^1)-\unkac_n(\func[n]^1)\right) \left(\Pkac_n(\func[n]^2)-\kac_n(\func[n]^2)\right)} = \dfrac{1}{N_1} \esp{W_n^{\unkac,N_1}(\func[n]^1)W_n^{\kac,N_1}(\func[n]^2)}\eqsp.
\end{align*}
Then, Lemma \ref{lem:LimWetaWgamma} gives
$$\lim_{N_1 \rightarrow \infty} N_1\esp{\Pkac_n(\func[n]^1) \Pkac_n(\func[n]^2) \prod_{p=0}^{n-1} \Pkac_p(\pot[p])} = \esp{W_n^{\kac}(\func[n]^2) W_n^{\unkac}(\func[n]^1)}\eqsp,$$
where $W_n^{\unkac}$ and $W_n^{\kac}$ are given by \eqref{eq:defWGammaLim} and \eqref{eq:defWetaLim}.
\end{proof}

\begin{lemma}
\label{lem:semiLinear}
For any time horizon $n \geq 1$, and any linear function $\Nfunc[n]\in \Nbanach{n}$ of the form
 $$
\Nfunc[n](\bx_{n})=\omeas^{N_1}\func[n](\bx_{n})\eqsp, \quad \text{where}~\func[n] \in \banach{n}\eqsp,
$$
we have
\begin{align}
\label{eq:RelSemi}
&\Nsemi[n]\Nfunc[n](\bx_{n-1})=\omeas^{N_1}\semi_n \func[n](\bx_{n-1})\eqsp, \\
\label{eq:RelSemibis}
&\Nsemi_{p,n}\Nfunc[n](\bx_{p})=\omeas^{N_1}\semi_{p,n} \func[n](\bx_{p}) \eqsp, && \text{for any $p\leq n$} \eqsp, \\
&\NsemiNorm_{p,n}\Nfunc[n](\bx_{p})=\omeas^{N_1}\semiNorm_{p,n} \func[n](\bx_{p}) &&\text{for any $p\leq n$}  \eqsp.
\end{align}
\end{lemma}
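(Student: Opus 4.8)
The plan is to isolate a single one-step identity, namely \eqref{eq:RelSemi}, and then obtain \eqref{eq:RelSemibis} and the normalized relation by propagating it through the semigroup, so that essentially all the work is concentrated in the first equality.

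First I would prove \eqref{eq:RelSemi} by direct computation. Writing $\Nsemi[n]\Nfunc[n](\bx_{n-1}) = \Npot[n-1](\bx_{n-1})\,\Ntrans{n}\Nfunc[n](\bx_{n-1})$ and inserting the linear form $\Nfunc[n](\bx_n) = N_1^{-1}\sum_{i=1}^{N_1}\func[n](x_n^i)$, the essential observation is that the island mutation kernel $\Ntrans{n}(\bx_{n-1},\rmd\bx_n)$ of \eqref{eq:defNtrans} is a product over the $N_1$ coordinates whose one-dimensional marginals all coincide with $\sum_{j=1}^{N_1}\frac{\pot[n-1](x_{n-1}^j)}{\sum_{k=1}^{N_1}\pot[n-1](x_{n-1}^k)}\trans{n}(x_{n-1}^j,\cdot)$. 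Integrating the single-coordinate function $\func[n](x_n^i)$ against this product leaves only that common marginal, independently of $i$, so the average $N_1^{-1}\sum_i$ collapses and $\Ntrans{n}\Nfunc[n](\bx_{n-1}) = \sum_{j=1}^{N_1}\frac{\pot[n-1](x_{n-1}^j)}{\sum_{k=1}^{N_1}\pot[n-1](x_{n-1}^k)}\trans{n}\func[n](x_{n-1}^j)$. Multiplying by $\Npot[n-1](\bx_{n-1}) = N_1^{-1}\sum_{k=1}^{N_1}\pot[n-1](x_{n-1}^k)$, the normalizing sum cancels and what remains is $N_1^{-1}\sum_{j=1}^{N_1}\pot[n-1](x_{n-1}^j)\trans{n}\func[n](x_{n-1}^j) = \omeas^{N_1}\semi[n]\func[n](\bx_{n-1})$, since $\semi[n] = \pot[n-1]\trans{n}$ by \eqref{eq:defQ}.

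The feature that drives the recursion is that the right-hand side of \eqref{eq:RelSemi} is again a linear function on the island space, with underlying function $\semi[n]\func[n]\in\banach{n-1}$. I would therefore record \eqref{eq:RelSemi} at a generic level $\ell$, i.e. $\Nsemi[\ell](\omeas^{N_1} h) = \omeas^{N_1}\semi[\ell] h$ for every $h\in\banach{\ell}$, and then prove \eqref{eq:RelSemibis} by backward induction on $p$ using $\Nsemi_{p,n} = \Nsemi[p+1]\Nsemi_{p+1,n}$. The base case $p=n$ holds because $\Nsemi_{n,n}$ and $\semi_{n,n}=\mathrm{I}_n$ are the respective identity kernels, so both sides reduce to $\Nfunc[n]=\omeas^{N_1}\func[n]$. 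For the inductive step, the hypothesis gives $\Nsemi_{p+1,n}\Nfunc[n] = \omeas^{N_1}\semi_{p+1,n}\func[n]$, which is linear with underlying function $\semi_{p+1,n}\func[n]$, and applying the generic one-step identity at level $p+1$ yields $\Nsemi[p+1]\,\omeas^{N_1}\semi_{p+1,n}\func[n] = \omeas^{N_1}\semi[p+1]\semi_{p+1,n}\func[n] = \omeas^{N_1}\semi_{p,n}\func[n]$.

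The normalized relation then costs no further probabilistic work: using $\NsemiNorm_{p,n} = \frac{\unkac_p(1)}{\unkac_n(1)}\Nsemi_{p,n}$, obtained above from \autoref{theo:unbiased}, together with $\semiNorm_{p,n} = \frac{\unkac_p(1)}{\unkac_n(1)}\semi_{p,n}$ from \eqref{eq:definition-potential-kernel} and the linearity of $\omeas^{N_1}$ in its function argument, I get $\NsemiNorm_{p,n}\Nfunc[n] = \frac{\unkac_p(1)}{\unkac_n(1)}\omeas^{N_1}\semi_{p,n}\func[n] = \omeas^{N_1}\semiNorm_{p,n}\func[n]$. The only genuinely delicate point is the base identity \eqref{eq:RelSemi}: one must perform the integration against the selection/mutation kernel with care and spot the exact cancellation between the selection normalization $\sum_{k}\pot[n-1](x_{n-1}^k)$ and the factor $N_1\,\Npot[n-1]$; once this is in place, the rest is an induction that merely re-applies the one-step identity together with the already-established normalization constants.
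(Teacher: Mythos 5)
Your proposal is correct and follows essentially the same route as the paper: the one-step identity \eqref{eq:RelSemi} via the collapse of the product kernel to its common marginal and the cancellation with $\Npot[n-1]$ (this is exactly the computation \eqref{eq:kac-expectation} the paper invokes), then an induction along the semigroup, with the normalized relation read off from $\NsemiNorm_{p,n}=\frac{\unkac_p(1)}{\unkac_n(1)}\Nsemi_{p,n}$ and \eqref{eq:definition-potential-kernel}. The only differences are cosmetic: you peel the semigroup from the front via $\Nsemi_{p,n}=\Nsemi[p+1]\Nsemi_{p+1,n}$ while the paper peels from the back via $\Nsemi[p,n]=\Nsemi[p,n-1]\Nsemi[n]$, and you spell out the normalized case that the paper leaves implicit.
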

\begin{proof}
We have from \eqref{eq:kac-expectation}
\begin{multline*}
\Ntrans{n}\Nfunc[n](\bx_{n-1})=\cesp{\Nfunc_n(\epart{n})}{\epart{n-1}=\bx_{n-1}}\\=\cesp{\omeas^{N_1}\func[n](\epart{n})}{\epart{n-1}=\bx_{n-1}} = \dfrac{\omeas^{N_1}\semi[n]\func[n](\bx_{n-1})}{\omeas^{N_1}\pot[n-1](\bx_{n-1})}\eqsp,
\end{multline*}
which implies
\begin{multline*}
\Nsemi_n \Nfunc[n](\bx_{n-1})=\Npot[n-1](\bx_{n-1}) ~\Ntrans{n}\Nfunc[n](\bx_{n-1})\\
=\omeas^{N_1}\pot[n-1](\bx_{n-1})\times\frac{\omeas^{N_1}\semi_n\func[n](\bx_{n-1})}{\omeas^{N_1}\pot[n-1](\bx_{n-1})} \eqsp,
\end{multline*}
showing \eqref{eq:RelSemi}.
The proof of \eqref{eq:RelSemibis} follows by an induction since
$$
\Nsemi[p,n]\Nfunc[n](\bx_{p})=\Nsemi[p,n-1]\Nsemi[n]\Nfunc[n](\bx_{p}) \eqsp.
$$
\end{proof}

\begin{proof}of \autoref{theo:InteractingAsymptotic}
~~\\
\textbf{Asymptotic bias behavior:}
For any fixed $N_1$, the asymptotic bias behavior of $\PNkac[n](\Nfunc[n])$ is given for any $\Nfunc[n] \in \Nbanach{n}$ by applying \autoref{theo:singleIslandAsymptotics} to the island particle model in the bootstrap case:
$$\lim_{N_2 \rightarrow \infty} N_2\esp{\PNkac[n](\Nfunc[n])-\Nkac_n(\Nfunc[n])} = -\sum_{p=0}^{n} \Nkac_p\left[ \NsemiNorm_{p,n}(1)\NsemiNorm_{p,n} \left( \Nfunc[n]-\Nkac_n(\Nfunc[n])\right)\right] \eqsp.$$
For linear functions $\Nfunc[n]$ of the form $\Nfunc[n] = \omeas^{N_1}\func[n]$ where $\func[n] \in \banach{n}$, Lemma \ref{lem:semiLinear} states that
\begin{multline}\label{eq:NTransFtoTransf}
\NsemiNorm_{p,n} \left(\Nfunc[n]-\Nkac_n(\Nfunc[n])\right)(\epart{p}) = \omeas^{N_1}\semiNorm_{p,n} \left( \func[n]-\kac_n(\func[n])\right)(\epart{p})\\
 = \Pkac_p(\semiNorm_{p,n} \left(\func[n]-\kac_n(\func[n])\right)) \eqsp,
 \end{multline}
and
\begin{equation}\label{eq:Ntrans1toTrans1}
\NsemiNorm_{p,n} (1)(\epart{p}) = \omeas^{N_1}\semiNorm_{p,n} \left( 1\right)(\epart{p})
\\
= \Pkac_p(\semiNorm_{p,n} \left( 1\right)) \eqsp.
\end{equation}
Therefore, we get
\begin{align} \label{eq:devBoldEta}
&\Nkac_p\left[ \NsemiNorm_{p,n}(1)\NsemiNorm_{p,n} \left( \Nfunc[n]-\Nkac_n(\Nfunc[n])\right)\right] \eqnum{1}  \dfrac{\Nunkac_p\left[ \NsemiNorm_{p,n}(1)\NsemiNorm_{p,n} \left( \Nfunc[n]-\Nkac_n(\Nfunc[n])\right)\right]}{\Nunkac_p(1)} \\ \nonumber
& \phantom{\Nkac_p[ \NsemiNorm_{p,n}(1)} \eqnum{2} \dfrac{\esp{\NsemiNorm_{p,n}(1)(\epart{p})\NsemiNorm_{p,n} \left(\Nfunc[n]-\Nkac_n(\Nfunc[n])\right)(\epart{p}) \prod_{\ell=0}^{p-1} \Npot_\ell(\epart{\ell})}}{\Nunkac_p(1)} \\
\nonumber
& \phantom{\Nkac_p[ \NsemiNorm_{p,n}(1)}
\eqnum{3} \dfrac{\esp{\Pkac_p(\semiNorm_{p,n} \left( \func[n]-\kac_n(\func[n])\right)) \Pkac_p(\semiNorm_{p,n} \left( 1\right)) \prod_{\ell=0}^{p-1} \kac_\ell^{N_1}(\pot[\ell])}}{\unkac_p(1)} \eqsp,
\end{align}
where $(1)$ is simply the definition \eqref{eq:defNEta} of $\Nkac_p$, $(2)$ stems from the definition \eqref{eq:interpretation-feynman-kac-Nflow} of $\Nunkac_p$, and $(3)$ follows from \autoref{theo:unbiased}, the definition \eqref{eq:defG} of $(\Npot_\ell)_{\ell \geq 0}$ and equations \eqref{eq:NTransFtoTransf} and \eqref{eq:Ntrans1toTrans1}.
As $\kac_p(\semiNorm_{p,n} \left( \func[n]-\kac_n(\func[n])\right)) = 0$ we can apply  \autoref{lem:espTwoFunc} and
\begin{align*}
& \lim_{N_1 \rightarrow \infty} N_1 \Nkac_p\left[ \NsemiNorm_{p,n}(1)\NsemiNorm_{p,n} \left( \Nfunc[n]-\Nkac_n(\Nfunc[n])\right)\right] \\
& \phantom{\lim_{N_1 \rightarrow \infty} N_1 \Nkac_p} = \sum_{\ell=0}^p \esp{W_\ell(\semiNorm_{\ell,p}(\semiNorm_{p,n}(1)-\kac_p \semiNorm_{p,n}(1))) W_\ell(\semiNorm_{\ell,p}\semiNorm_{p,n}(\func[n]-\kac_n(\func[n])))} \\
& \phantom{\lim_{N_1 \rightarrow \infty} N_1 \Nkac_p} = \sum_{\ell=0}^p \esp{W_\ell(\semiNorm_{\ell,n}(1)-\semiNorm_{\ell,p}(1)) W_\ell(\semiNorm_{\ell,n}(\func[n]-\kac_n(\func[n])))} \eqsp,
\end{align*}
from which we conclude that
\begin{align*}
& \lim_{N_1 \rightarrow \infty} \lim_{N_2 \rightarrow \infty} N_1 N_2\esp{\PNkac[n](\Nfunc[n])-\Nkac_n(\Nfunc[n])} \\
& \phantom{\lim_{N_1 \rightarrow \infty} } = -\sum_{p=0}^{n} \sum_{\ell=0}^p \esp{W_\ell(\semiNorm_{\ell,n}(1)-\semiNorm_{\ell,p}(1)) W_\ell(\semiNorm_{\ell,n}(\func[n]-\kac_n(\func[n])))} \\
& \phantom{\lim_{N_1 \rightarrow \infty} } = -\sum_{\ell=0}^{n} \sum_{p=\ell}^n \esp{W_\ell(\semiNorm_{\ell,n}(1)-\semiNorm_{\ell,p}(1)) W_\ell(\semiNorm_{\ell,n}(\func[n]-\kac_n(\func[n])))} \\
& \phantom{\lim_{N_1 \rightarrow \infty} } = B_n(\func[n]) + \widetilde{B}_n(\func[n])\eqsp,
\end{align*}
where $B_n(\func[n])$ is defined in \eqref{eq:defBn} and $\widetilde{B}_n(\func[n])$ is given in \eqref{eq:defBtilden}.
~~\\

\textbf{Asymptotic variance behavior:}
For any fixed $N_1$, the asymptotic variance behavior of $\PNkac[n](\Nfunc[n])$ is given for any $\Nfunc[n] \in \Nbanach{n}$ by applying \autoref{theo:singleIslandAsymptotics} to the island particle model in the bootstrap case:
$$\lim_{N_2 \rightarrow \infty} N_2\var{\PNkac[n](\Nfunc[n])} = \sum_{p=0}^{n} \Nkac_p\left[ \NsemiNorm_{p,n} \left( \Nfunc[n]-\Nkac_n(\Nfunc[n])\right)^2\right] \eqsp.$$
For linear functions $\Nfunc[n]$ of the form $\Nfunc[n] = \omeas^{N_1}\func[n]$ where $\func[n] \in \banach{n}$, using the same steps as in \eqref{eq:devBoldEta}, we get
$$\Nkac_p\left[ \NsemiNorm_{p,n} \left( \Nfunc[n]-\Nkac_n(\Nfunc[n])\right)^2\right] = \dfrac{\esp{\Pkac_p(\semiNorm_{p,n} \left( \func[n]-\kac_n(\func[n])\right))^2 \prod_{\ell=0}^{p-1} \kac_\ell^{N_1}(\pot[\ell])}}{\unkac_p(1)} \eqsp.$$
As $\kac_p(\semiNorm_{p,n} \left(\func[n]-\kac_n(\func[n])\right)) = 0$ we can apply \autoref{lem:espTwoFunc} and
\begin{multline*}
\lim_{N_1 \rightarrow \infty} N_1 \Nkac_p\left[ \NsemiNorm_{p,n} \left( \Nfunc[n]-\Nkac_n(\Nfunc[n])\right)^2\right]
= \sum_{\ell=0}^p \esp{W_\ell(\semiNorm_{\ell,p}\semiNorm_{p,n}(\func[n]-\kac_n(\func[n])))^2} \\
= \sum_{\ell=0}^p \esp{W_\ell(\semiNorm_{\ell,n}(\func[n]-\kac_n(\func[n])))^2} \eqsp,
\end{multline*}
from which we conclude that
\begin{multline*}
\lim_{N_1 \rightarrow \infty} \lim_{N_2 \rightarrow \infty} N_1 N_2\var{\PNkac[n](\Nfunc[n])} = \sum_{p=0}^{n} \sum_{\ell=0}^p \esp{W_\ell(\semiNorm_{\ell,n}(\func[n]-\kac_n(\func[n])))^2} \\
= \sum_{\ell=0}^{n} \sum_{p=\ell}^n \esp{W_\ell(\semiNorm_{\ell,n}(\func[n]-\kac_n(\func[n])))^2}
= V_n(\func[n]) + \widetilde{V}_n(\func[n])\eqsp,
\end{multline*}
where $V_n(\func[n])$ is defined in \eqref{eq:defVn} and $\widetilde{V}_n(\func[n])$ is given in \eqref{eq:defVtilden}.
\end{proof}

	\subsection{Proof of \autoref{ProofUnbiasedEpsilon}}
	\label{subsec:ProofUnbiasedEpsilon}
\begin{lemma}
\label{lem:propertyS}
Let $ \Peps{n}$ be a nonnegative constant such that $ \Peps{n}~\pot[n]\in [0,1]$.
Then
\begin{equation*}
\PPsi{n}(\mu_{n})=\mu_n S_{n,\mu_n} \eqsp,
\end{equation*}
where $S_{n,\mu_n}$ is defined in \eqref{eq:defS}.
\end{lemma}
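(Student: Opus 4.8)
The plan is to verify the identity $\PPsi{n}(\mu_n) = \mu_n S_{n,\mu_n}$ by evaluating the right-hand side on an arbitrary measurable set $A_n \in \Xfield{n}$ and checking that it coincides with $\PPsi{n}(\mu_n)(A_n)$. Since both sides are measures on $(\Xset{n},\Xfield{n})$, testing against indicators $\mathds{1}_{A_n}$ is enough (equivalently, against bounded measurable test functions).

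First I would integrate the definition \eqref{eq:defS} of $S_{n,\mu_n}$ against $\mu_n$ and split the result into the retention (Dirac) part and the resampling (Boltzmann--Gibbs) part:
\[
\mu_n S_{n,\mu_n}(A_n) = \Peps{n}\int \pot[n](x_n)\,\delta_{x_n}(A_n)\,\mu_n(\rmd x_n) + \int \bigl(1-\Peps{n}\pot[n](x_n)\bigr)\PPsi{n}(\mu_n)(A_n)\,\mu_n(\rmd x_n).
\]
For the Dirac term I would use $\delta_{x_n}(A_n)=\mathds{1}_{A_n}(x_n)$ together with the definition \eqref{eq:defPsi} of $\PPsi{n}$ to rewrite it as $\mu_n(\pot[n])\,\PPsi{n}(\mu_n)(A_n)$. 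For the resampling term, $\PPsi{n}(\mu_n)(A_n)$ does not depend on $x_n$, so I would pull it out and compute $\mu_n\bigl(1-\Peps{n}\pot[n]\bigr)=1-\Peps{n}\mu_n(\pot[n])$, using that $\mu_n$ is a probability measure. Collecting the two contributions gives
\[
\mu_n S_{n,\mu_n}(A_n) = \bigl[\Peps{n}\mu_n(\pot[n]) + 1 - \Peps{n}\mu_n(\pot[n])\bigr]\PPsi{n}(\mu_n)(A_n) = \PPsi{n}(\mu_n)(A_n),
\]
which is exactly the claim.

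There is no serious obstacle here; the computation is elementary measure manipulation. The only point that deserves attention --- and the reason the identity holds --- is that the $\Peps{n}$-weighted contribution from the retained particles reconstitutes precisely a fraction $\Peps{n}\mu_n(\pot[n])$ of the Boltzmann--Gibbs measure, so that the $\Peps{n}$-dependence cancels and the equality is valid for every admissible constant (i.e. with $\Peps{n}\,\|\pot[n]\|_\infty \le 1$, which also guarantees $1-\Peps{n}\pot[n]\ge 0$ so that $S_{n,\mu_n}$ is a genuine Markov kernel). This cancellation is exactly what makes the $\epsilon$-bootstrap a legitimate alternative selection scheme sharing the same one-step updated law $\PPsi{n}(\mu_n)$ as the plain bootstrap.
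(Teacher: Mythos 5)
Your proof is correct and follows essentially the same route as the paper: integrating the definition \eqref{eq:defS} against $\mu_n$, identifying the Dirac contribution as $\Peps{n}\,\mu_n(\pot[n])\,\PPsi{n}(\mu_n)(A_n)$ via \eqref{eq:defPsi}, and observing that the $\Peps{n}$-dependent terms cancel because $\mu_n$ is a probability measure. Your closing remark on why the cancellation occurs is a nice addition but does not change the argument.
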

\begin{proof}
By \eqref{eq:defS} and \eqref{eq:defPsi} we have for any $A_n \in \Xfield{n}$
\begin{align*}
&\mu_n S_{n,\mu_n}(A_n) = \int \mu_n(\rmd x_n) S_{n,\mu_n}(x_n,A_n) \\
&\phantom{\mu_n S_{n,\mu_n}(A_n)} = \int \mu_n(\rmd x_n) \left[ \Peps{n}~\pot[n](x_n) \delta_{x_n}(A_n)+ \left(1- \Peps{n}~\pot[n](x_n)\right)\PPsi{n}(\mu_{n})(A_n) \right] \\
&\phantom{\mu_n S_{n,\mu_n}(A_n)} = \Peps{n} \int_{A_n} \mu_n(\rmd x_n)\pot[n](x_n) + \left(1- \Peps{n}~\mu_{n}(\pot[n])\right)\PPsi{n}(\mu_{n})(A_n) \\
&\phantom{\mu_n S_{n,\mu_n}(A_n)} = \Peps{n} \mu_{n}(\pot[n])\PPsi{n}(\mu_{n})(A_n) + \left(1- \Peps{n}~\mu_{n}(\pot[n])\right)\PPsi{n}(\mu_{n})(A_n)=\PPsi{n}(\mu_{n})(A_n) \eqsp. \qed
\end{align*}
\end{proof}
Let $\mathcal{F}^{N_1}_n$ be the increasing filtration associated to the particle evolution
$\mathcal{F}^{N_1}_n \eqdef \sigma \left( \epart{p}, 0 \leq p \leq n \right) \eqsp.$
As in the proofs of \autoref{theo:unbiased} and \autoref{prop:unbiasedESS}, the only point is to prove that
$$
\cesp{\Pkac_p(\func[p])}{\mathcal{F}^{N_1}_{p-1}}=\dfrac{\Pkac_{p-1}(Q_{p}\func[p])}{\Pkac_{p-1}(\pot[p-1])} \eqsp,
$$
where $Q_{p}$ is defined in \eqref{eq:defQ}. Or,
\begin{multline}
\cesp{\Pkac_p(\func[p])}{\mathcal{F}^{N_1}_{p-1}}=\frac{1}{N_1}\sum_{i=1}^{N_1}{\cesp{\func[p](\epart[i]{p})}{\mathcal{F}^{N_1}_{p-1}}}= \frac{1}{N_1}\sum_{i=1}^{N_1} \Ntrans{p}(\func[p])(\epart[i]{p-1}) \\
=\Pkac_{p-1}\Ntrans{p}(\func[p]) = \Pkac_{p-1} S_{p-1,\Pkac_{p-1}} \trans{p}(\func[p]) = \PPsi{p-1}(\Pkac_{p-1})\trans{p}(\func[p]) = \dfrac{\Pkac_{p-1}(Q_{p}\func[p])}{\Pkac_{p-1}(\pot[p-1])} \eqsp,
\end{multline}
using respectively \eqref{eq:approxDef_1}, \eqref{eq:defNtransEpsilon}, \autoref{lem:propertyS} and \eqref{eq:defPsi}.
		
	\subsection{Proof of \autoref{cor:choice-epsilon}}
	\label{subsec:ProofEpsilon}
	For the $\epsilon$-interaction bootstrap, the sequence $(W_p^{N_1})_{1 \leq p \leq n}$ converges in law, as $N_1$ tends to infinity, to a sequence of $n$ independent centered Gaussian random fields $(W_p)_{0 \leq p \leq n}$ with variance given by
\begin{align*}
\esp{W_p(\func[p])^2}&=\kac_{p-1} S_{p-1,\kac_{p-1}}\trans{p}\func[p]^2-\kac_{p-1} \left[ \left(S_{p-1,\kac_{p-1}} \trans{p}\func[p]\right)^2\right] \\
&=\PPsi{p-1}(\kac_{p-1})(\trans{p}\func[p]^2)-\kac_{p-1} \left[ \left(S_{p-1,\kac_{p-1}} \trans{p}\func[p]\right)^2\right] \eqsp,
\end{align*}
thanks to \autoref{lem:propertyS}.
~~\\
In the special case $\Peps{p}=0$ (the bootstrap case), the function $S_{p,\kac_p}g_p$ is constant and equal to $ \PPsi{p}(\kac_{p})(g_p)$ and the variance for the bootstrap is just
$$
\PPsi{p-1}(\kac_{p-1})(\trans{p}\func[p]^2)- \left(\PPsi{p-1}(\kac_{p-1}) \trans{p}\func[p]\right)^2
$$
Therefore, the variance of the $\epsilon$-interaction bootstrap may be decomposed as follows
\begin{align*}
\esp{W_p(\func[p])^2} &= \left(\PPsi{p-1}(\kac_{p-1})(\trans{p}\func[p]^2)- \left(\PPsi{p-1}(\kac_{p-1})\trans{p}\func[p]\right)^2  \right) \\
& \eqsp \eqsp \eqsp - \left( \kac_{p-1} \left[ \left(S_{p-1,\kac_{p-1}} \trans{p}\func[p]\right)^2\right] -\left(\PPsi{p-1}(\kac_{p-1})\trans{p}\func[p]\right)^2\right) \eqsp.
\end{align*}
Observing,
\begin{multline*}
\kac_{p-1} \left[ \left(S_{p-1,\kac_{p-1}} \trans{p}\func[p]\right)^2\right] -\left(\PPsi{p-1}(\kac_{p-1})\trans{p}\func[p]\right)^2 \\
= \kac_{p-1} \left( \left[S_{p-1,\kac_{p-1}}\trans{p}\func[p]-\PPsi{p-1}(\kac_{p-1})(\trans{p}\func[p])\right]^2 \right) \geq 0 \eqsp,
\end{multline*}
allows to conclude.

	\subsection{Proof of \autoref{prop:unbiasedESS}}
	\label{subsec:ProofUnbiasedESS}
	Using \eqref{eq:approxDefESS}, \eqref{eq:defGESS}, \eqref{eq:defNkacESS}, and for $\Nfunc[n]$ such that $\Nfunc[n](\epart{n}) = \left(\sum_{i=1}^{N_1} w_n^i\right)^{-1}\sum_{i=1}^{N_1} w_n^i\func[n]\left(\epart[i]{n}\right) = \omeas^{N_1}\func[n](\epart{n}) = \Pkac_n(\func[n])$, we get
\begin{align*}
\Nunkac_n(\Nfunc[n]) \eqdef \esp{\Nfunc[n](\epart{n})~\prod_{0\leq p<n}\Npot[p](\epart{p})}
&= \esp{\Pkac_n(\func[n])~\prod_{0\leq p<n}\Pkac_p(\pot[p])}.
\end{align*}
By \eqref{eq:approxDefESS-gamma}, it suffices to prove that $\esp{\unkac_n^{N_1}(\func[n])}=\unkac_{n}(\func[n]).$
We define by $\mathcal{F}^{N_1}_n$ the increasing filtration associated to the particle evolution
$\mathcal{F}^{N_1}_n \eqdef \sigma \left( \epart{p}, 0 \leq p \leq n \right) \eqsp.$
We will show that for any $p>0$ and $\func[p] \in \banach{p}$, we have
$\cesp{\Pkac_p(\func[p])}{\mathcal{F}^{N_1}_{p-1}} = \Pkac_{p-1}(\semi_p \func[p]) / \Pkac_{p-1}(\pot[p-1]) \eqsp,$
where $\semi_p$ is defined in \eqref{eq:defQ}. Indeed, by the definitions \eqref{eq:defNtransESS} of $\trans{p}$ and \eqref{eq:approxDefESS} of $\Pkac[p]$,
\begin{align*}
& \cesp{\Pkac_p(\func[p])}{\mathcal{F}^{N_1}_{p-1}} = \sum_{i=1}^{N_1} \frac{\eweight[i]{p}}{\sum_{j=1}^{N_1} \eweight[j]{p}} \cesp{\func[p](\epart[i]{p})}{\epart{p-1}} \\
& = \one_{\Theta_{p-1,\alpha}}(\epart{p-1}) \left[\dfrac{\sum_{i=1}^{N_1} \eweight[i]{p-1}\pot[p-1](\epart[i]{p-1})\trans{p}\func[p](\epart[i]{p-1})}{\sum_{i=1}^{N_1} \eweight[i]{p-1}\pot[p-1](\epart[i]{p-1})} \right] \\
& \quad + \one_{\Theta_{p-1,\alpha}^\mathrm{C}}(\epart{p-1}) \left[ \dfrac{1}{N_1}\sum_{i=1}^{N_1} \dfrac{\sum_{j=1}^{N_1} \eweight[j]{p-1}\pot[p-1](\epart[j]{p-1})\trans{p}\func[p](\epart[j]{p-1})}{\sum_{j=1}^{N_1} \eweight[j]{p-1}\pot[p-1](\epart[j]{p-1})} \right] \\
&= \dfrac{\Pkac_{p-1}(\semi_p \func[p])}{\Pkac_{p-1}(\pot[p-1])} \eqsp.
\end{align*}
The proof follows exactly along the same lines as \autoref{theo:unbiased}.
By iterating this step we get
\begin{align*}
\esp{\Punkac_n(\func[n])} &= \esp{\Pkac_{0}(\semi_1 \cdots \semi_n \func[n])}
= \esp{\semi_1 \cdots \semi_n\func[n](\epart[1]{0})} \\
&= \unkac_0\semi_1 \cdots \semi_n\func[n]
= \unkac_n(\func[n]) \eqsp.
\end{align*}
As the reader may have noticed, this unbias property doesn't depend on the definition of the sets $\Theta_{p,\alpha}$ defining the resampling times. From this observation, we underline that \autoref{prop:unbiasedESS} is also true for more general classes of resampling time criterion.

\section{Acknowledgement}
This work is supported by the Agence Nationale de la Recherche through the 2009-2012 project Big MC. The work of Christelle Vergé is financially supported by CNES (Centre National d'Etudes Spatiales) and Onera, The French Aerospace Lab.

\bibliographystyle{plain}
\bibliography{bib}
\end{document}